%
%

\documentclass[11pt,reqno,letterpaper]{amsart}
\usepackage{amssymb}
\usepackage{graphicx,color}
\usepackage{hyperref}
\usepackage[normalem]{ulem}


\usepackage[utf8]{inputenc}
\usepackage{gensymb}
\usepackage[square,numbers]{natbib}
\usepackage{amssymb}
\usepackage{amsfonts}
\usepackage{latexsym}
\usepackage{amsmath,systeme}
\usepackage{graphics}
\usepackage{subfigure}
\usepackage{graphicx}
\usepackage{epsfig}
\usepackage[leqno]{mathtools}
\usepackage{amsthm}
\usepackage{MnSymbol}
\usepackage[titletoc]{appendix}
\usepackage[dvipsnames,usenames]{xcolor}

\usepackage{fullpage} 


\mathtoolsset{showonlyrefs}

\usepackage{scalerel}[2014/03/10]
\usepackage[usestackEOL]{stackengine}

\def\dashint{\,\ThisStyle{\ensurestackMath{%
  \stackinset{c}{.2\LMpt}{c}{.5\LMpt}{\SavedStyle-}{\SavedStyle\phantom{\int}}}%
  \setbox0=\hbox{$\SavedStyle\int\,$}\kern-\wd0}\int}
\def\ddashint{\,\ThisStyle{\ensurestackMath{%
  \stackinset{c}{.2\LMpt}{c}{.5\LMpt+.2\LMex}{\SavedStyle-}{%
    \stackinset{c}{.2\LMpt}{c}{.5\LMpt-.2\LMex}{\SavedStyle-}{%
      \SavedStyle\phantom{\int}}}}\setbox0=\hbox{$\SavedStyle\int\,$}\kern-\wd0}\int}

\theoremstyle{plain}


\theoremstyle{plain}
\newtheorem{theorem}{Theorem} [section]

\newtheorem{lemma}[theorem]{Lemma}


\theoremstyle{definition}
\newtheorem{definition}[theorem]{Definition}
\theoremstyle{remark}
\newtheorem{remark}[theorem]{Remark}

\def\RR{{\mathbb R}}

\def\N{{\mathbb N}}
\def\LE{\mathcal{E}}

\numberwithin{theorem}{section}
\numberwithin{equation}{section}

\begin{document}

\title{Local and nonlocal energy-based coupling models}

\thanks{}

\author[G. Acosta]{Gabriel Acosta}

\author[F. Bersetche]{Francisco M. Bersetche}

\author[J. D. Rossi]{Julio D. Rossi}

\address{Departamento  de Matem\'atica, FCEyN, Universidad de Buenos Aires,
 Pabell\'on I, Ciudad Universitaria (1428),
Buenos Aires, Argentina \newline
\indent G. Acosta is also a member of IMAS, Conicet, Argentina.}
\email{gacosta@dm.uba.ar, fbersetche@dm.uba.ar, jrossi@dm.uba.ar}

\thanks{G.A. partially supported by ANPCyT under grant PICT 2018 - 3017 and CONICET grant PIP 11220130100184CO (Argentina). \\
\indent F.M.B. partially supported by PEDECIBA postdoctoral fellowship (Uruguay), and by ANPCyT under grant PICT 2018 - 3017 (Argentina).
 \\
\indent J.D.R. partially supported by 
CONICET grant PIP GI No 11220150100036CO
(Argentina), PICT 2018 - 3183 (Argentina) and UBACyT grant 20020160100155BA (Argentina).}

\keywords{local equations, nonlocal equations, couplings, Elasticity 
\\
\indent 2010 {\it Mathematics Subject Classification:}
35R11, 
45K05, 
47G20, 
}
\date{}

\begin{abstract}
In this paper we study two different ways of coupling a local operator with a nonlocal one in such a way that the 
resulting equation is related to an energy functional.
In the first strategy the coupling is given via source terms in the equation and in the 
second one a flux condition in the local part appears. 
For both models we prove existence and uniqueness of a solution that is obtained
via direct minimization of the related energy functional.
In the second part of this paper we extend these ideas to deal with 
local/nonlocal elasticity models in which we couple classical local elasticity with nonlocal
peridynamics. 
\end{abstract}

\maketitle


\section{Introduction} 

Our main goal in this paper is to show existence and uniqueness of solutions 
to two different coupled local/nonlocal equations that are naturally associated with 
two different energies. We deal both with the scalar and the vectorial case (covering
elasticity models).

Nonlocal models can describe phenomena
not well represented by classical Partial Differential Equations, PDE, (including
problems characterized by long-range interactions and discontinuities). 
For instance, in the context of diffusion, long-range interactions effectively
describe anomalous diffusion, while in the context of mechanics, cracks
formation results in material discontinuities. 
The fundamental difference between nonlocal models and classical local models  
is the fact that the latter only involve differential operators, whereas the
former rely on integral operators. For general references on nonlocal models 
with applications to elasticity, population dynamics, image processing, etc, the list is quite large. For a glimpse we refer
to \cite{BCh,Bere,CaRo,CF,ChChRo,CERW,Cortazar-Elgueta-Rossi-Wolanski,Coville,DiPaola,F,Hutson,MenDu,Sil,Sil1,
Sil2,Strick,W,Z} and the book \cite{ElLibro}.

It is often the case that nonlocal effects are concentrated only in
some parts of the domain, whereas, in the remaining parts, the system can be accurately
described by a PDE. The goal of coupling local and nonlocal models is to
combine a local equation (a PDE) with a nonlocal one (an integral equation),
under the assumption that the location of local and nonlocal effects can be identified
in advance. 
In this
context, one of the challenges of a coupling strategy is to provide a mathematically 
consistent formulation.

From a mathematical point of view, interesting properties arise from coupling local and nonlocal models, 
see \cite{Peri2,Peri3,delia2,delia3,Du,Gal,GQR,Kri,santos2020} and references therein.
As previous examples of coupling approaches between 
local and nonlocal regions we refer the reader to \cite{Peri1,Peri2,Peri3,delia2,delia3,delia,Du,Gal,GQR,Han,Kri,santos2020,Sel,Sel2,Sel3}
the survey \cite{SUR}
and references therein. 
Previous strategies treat the coupling condition as an optimization
objective (the
goal is to minimize the mismatch of the local and nonlocal solutions on the overlap
of their sub-domains).
Another example relies on the partitioned procedure as a general coupling
strategy for heterogeneous systems, the system is divided into sub-problems in
their respective sub-domains, which communicate with each other via transmission
conditions. 
In \cite{Bere} the effects of network transportation on enhancing biological invasion is studied. The proposed mathematical model consists of one equation with nonlocal diffusion in a one-dimensional domain coupled via the boundary condition with a standard reaction-diffusion, in a two-dimensional domain. The results suggested that the fast diffusion enhances the spread in the domain in which the local diffusion takes place.
In \cite{delia2}, local and nonlocal problems were coupled through a prescribed region in which both kinds of equations overlap (the value of the solution in the nonlocal part of the domain is used as a Dirichlet boundary condition for the local part and vice-versa). This kind of coupling gives continuity of the solution in the overlapping region but does not preserve the total mass when Neumann boundary conditions are imposed.
In \cite{delia2} and \cite{Du}, numerical schemes using local and nonlocal equations were developed and used to improve the computational
accuracy when approximating a purely nonlocal problem. In \cite{GQR} and \cite{santos2020} (see also \cite{Gal, Kri}), 
evolution problems related to energies closely related to ours are studied (here we deal with stationary problems).

In this paper we introduce two different ways of coupling local and nonlocal models. 
Let us describe briefly what we have in mind and refer to the next section for the precise hypothesis
and statements of our results. 
Here we fix a bounded domain $\Omega \subset \mathbb{R}^N$ that is  
divided into two disjoint subdomains $\Omega_\ell,\Omega_{n\ell}\subset \Omega$, $\Omega_\ell \cup \Omega_{n\ell} = \Omega$,
$\Omega_\ell \cap \Omega_{n\ell}= \emptyset$.  
In the first one, $\Omega_\ell$, we have a local operator while in the second one, $\Omega_{n\ell}$,
we have a nonlocal operator. Therefore, we look for an energy that involves terms like
$$
\int_{\Omega_\ell} \frac{|\nabla u(x)|^2}{2} {\rm d}x 
$$
(this is related to a local operator, the Laplacian, in $\Omega_\ell$) and 
$$
\frac{1}{2}\int_{\Omega_{n\ell}}\int_{\Omega_{n\ell}} J(x-y)(u(y)-u(x))^2\, {\rm d}y{\rm d}x
$$
(that gives a nonlocal operator in $\Omega_{n\ell}$).
The main issue is then to couple the two regions.
We deal here with two different couplings that we will call 
\emph{volumetric} and \emph{mixed} couplings. Volumetric couplings describe interactions between sets of positive $N-$dimensional measure while in mixed couplings volumetric parts of $\Omega_{n\ell}$ and lower dimensional parts of $\Omega_{\ell}$ can interact with each other. 
In the first case we add to our energy a term like 
$$
\frac{1}{2}\int_{\Omega_{\ell}}\int_{\Omega_{n\ell}} J(x-y)(u(y)-u(x))^2\, {\rm d}y{\rm d}x
$$
(notice that here we are integrating in $\Omega_{\ell} \times \Omega_{n\ell}$) 
and in the latter case we fix a hypersurface on the boundary 
of $\Omega_\ell$ (that we call $\Gamma$) and we add 
$$
 \frac{1}{2}\int_{\Omega_{nl}}\int_{\Gamma}G(x,z)\left(u(x)-u(z)\right)^2  {\rm d}\sigma(z) {\rm d}x
$$
(remark that here we are integrating in $\Omega_{nl}\times \Gamma$ and that $\Gamma$ is 
a lower dimensional set). 

Here we develop the theory looking for minimizers of energies that combine the previous 
terms. We show that under very general conditions on the domains and the kernels (conditions
that mainly ensure some connectedness of the underlying topology behind the resulting problem) 
we have existence and uniqueness of a minimizer that verifies a set of equations
coupling local and nonlocal operators. 
We deal both with the scalar and the vectorial case (in this last case we are able to include couplings
between classical local elasticity models and the nonlocal elasticity model called peridynamics). 
To make the exposition as simple as possible we consider homogeneous
Dirichlet boundary/exterior data. 

\medskip

The paper is organized as follows: in \ref{sec:main} we introduce the precise conditions
that we impose on our sets and kernels and we state our main results; in \ref{sect-scalar} we 
deal with the scalar case and in \ref{sect-vectorial} we tackle the more involved vectorial case including
  elasticity models; finally, in \ref{sect-comments} we mention some possible extensions of our results.

\section{Main Results}
\label{sec:main}

Let us describe in detail the general setting that we consider here.
Along this work we consider an open bounded domain $\Omega \subset \mathbb{R}^N$. In our models it is 
assumed that $\Omega$ is  
divided into two disjoint subdomains $\Omega_\ell,\Omega_{n\ell}\subset \Omega$.  
The first one, $\Omega_\ell$, is where local phenomena take place and the second one, $\Omega_{n\ell}$,
is the domain where nonlocal effects occur. 
We assume that we can write
 $ \Omega = (\overline{\Omega}_\ell \cup \overline{\Omega}_{n\ell})^{\circ}$
(both subdomains ${\Omega}_{\ell}$ and ${\Omega}_{n\ell}$ are assumed to be open). Notice that it may happen that
$\partial{\Omega}_\ell \cap \partial \Omega \neq \emptyset$ or
$\partial{\Omega}_\ell \subset \Omega$ as illustrated in Figure 1
(it also may happen that $\partial{\Omega}_{n\ell} \cap \partial \Omega \neq \emptyset$ 
or $\partial{\Omega}_{n\ell} \subset \Omega$).

\begin{figure}[h]\label{Figura1}
\begin{center}
\includegraphics[scale=.4]{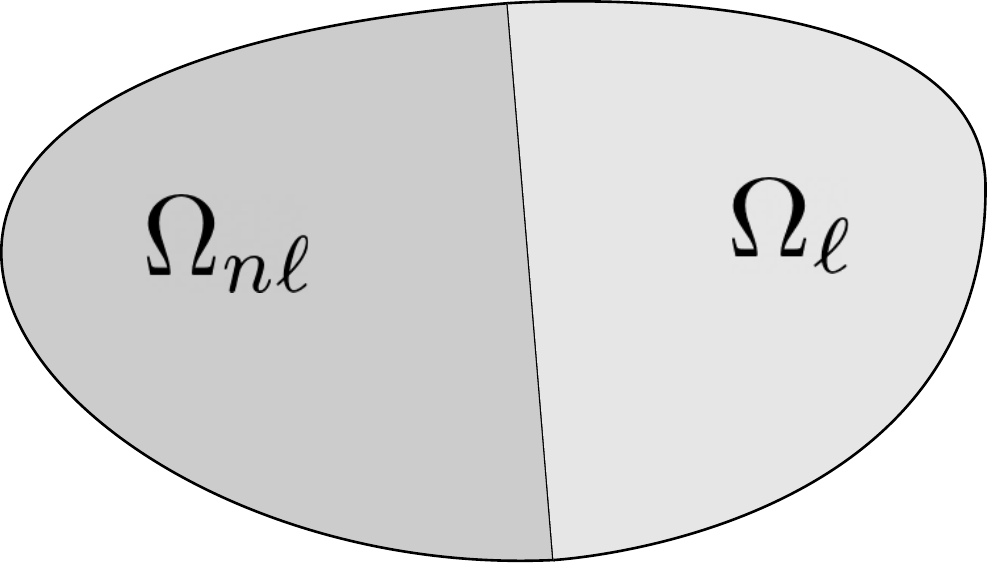} \qquad
	\includegraphics[scale=.4]{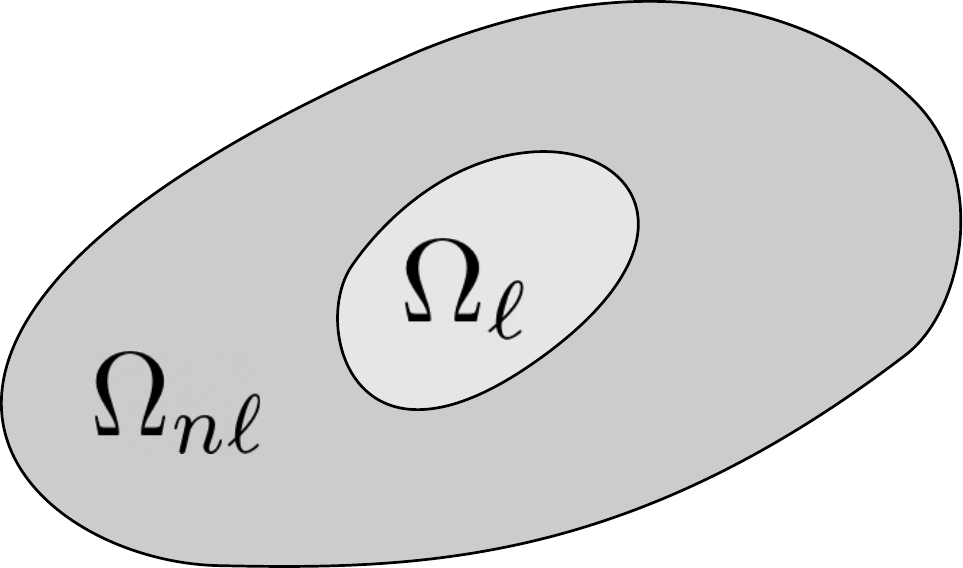}
		\caption{{Two possible partitions $\Omega_\ell \cup \Omega_{n\ell} = \Omega$.}}
\end{center}
\end{figure}

We will consider a nonlocal operator that involves a kernel $J:\RR^N \to \RR$ that is a nonnegative measurable function. 
For $J$ we assume 
 \begin{itemize}
  \item[$(J1)$] there exist $\delta>0$
and $C>0$ such that $J(z)>C$ for all $z$ such that $\|z\|\le 2\delta$.


\item[$(J2)$] the convolution $T_J(f)=J*f$ defines a compact operator in $L^2(\Omega_{n\ell})$.
 \end{itemize}
 
In our models, $J$ is a kernel that encodes the effect of a general \emph{volumetric} nonlocal interaction. Condition $(J1)$ guarantees the influence of nonlocality within an horizon of size at least $2\delta$ while $(J2)$ is a technical requirement fulfilled, for  instance, by continuous 
kernels, characteristic functions, or even for $L^2$ kernels, (this holds since these kernels produce Hilbert-Schmidt operators of the form 
$f \mapsto T(f)(x):=\int k(x,y) f(y) {\rm d}y$ that are compact if $k\in L^2$, see Chapter VI in \cite{brezis}). 

Now we need to introduce a connectivity condition. 

 \begin{definition}\label{def:deltaconnected}
 {\rm We say that an open set $D \subset \mathbb{R}^N$ is $\delta-$connected , with $\delta\ge 0$, if it can not be written as a disjoint union of two
 (relatively) open nontrivial sets 
 that are at distance greater or equal than $\delta.$}
\end{definition}

 Notice that if $D$ is $\delta$ connected then it is $\delta'$ connected for any $\delta'\ge \delta$. From \ref{def:deltaconnected}, 
we notice that $0-$connectedness agrees with the classical notion of being connected (in particular, open connected sets are $\delta-$connected).
\ref{def:deltaconnected} can be written in an equivalent way: an open set $D$ is $\delta-$connected if given 
  two points $x,y\in D$,  there exists a finite number of points $x_0,x_1,...,x_n \in D$ 
 such that $x_0=x$,
 $x_n=y$ and $dist(x_i,x_{i+1}) <\delta$. 
 
 Loosely speaking $\delta$-connectedness combined with $(J1)$ says that the effect of nonlocality can travel beyond the horizon $2\delta$ through the whole domain.

 With \ref{def:deltaconnected} at hand we can write the following extra assumptions on the local/nonlocal sets. 
 \begin{enumerate}
 \item[(1)] $ \Omega_\ell$ is connected and smooth ($ \Omega_\ell$ has Lipschitz boundary),
 
 
 \item[(2)] $ \Omega_{n\ell} $ is $\delta-$connected.
 \end{enumerate}
 
 In order to keep our presentation as simple as possible volumetric couplings are modeled by means of the same kernel $J$
(however, other choices are possible, see \ref{rem.1.5} below).

Mixed couplings, on the other hand, are  restricted to interactions of $\Omega_{n\ell}$ with a fixed smooth hypersurface  
$\Gamma \subset \partial \Omega_\ell.$ In this context a nonnegative and measurable function 
$G:\Gamma \times \Omega_{n\ell} \mapsto \mathbb{R},$
plays the role of the associated kernel. The following condition is a substitute of the volumetric counterpart.
 \begin{itemize}
  \item[$(G1)$]  there exist $\delta>0$
and $C>0$ such that for any $(x,y)\in \Gamma\times \Omega_{n\ell}$,  $G(x,y)>C$  if $\|x-y\|\le 2\delta$.
 \end{itemize}
Finally, in order to avoid trivial couplings in any of the two cases, we impose that  $\Omega_{\ell}$ and $\Omega_{n\ell}$ need to be closer than the horizon of the involved kernel, 
\begin{enumerate}
\item[$(P1)$]  $dist ( \Omega_{\ell},\Omega_{n\ell})<\delta$,
 
 
\item[$(P2)$]  $dist(\Gamma,\Omega_{n\ell})<\delta$.
\end{enumerate}

\begin{remark} \label{rem.intro.dom}
Our results are valid for more general domains. In fact, we assumed that $\Omega_\ell$ is connected and that
$\Omega_{n\ell}$ is $\delta-$connected with $dist(\Omega_{\ell},\Omega_{n\ell})<\delta$, but we can also handle
the case in which $\Omega_\ell$ has several connected components and $\Omega_{n\ell}$ has several 
$\delta-$connected components as long as they are close between them. See Remark \ref{re,.sec.dom} 
for extra details. We prefer to state our results under conditions (1), (2), $(G1)$ and $(P1)$, $(P2)$ just to simplify the presentation. 
\end{remark}

\subsection{Scalar problems}

 As a warm up we first deal with the scalar case, that is, we look for minimizers of local/nonlocal
energies with different coupling terms defined for real valued functions 
$
u: \Omega \mapsto \mathbb{R}.$
This case is simpler but it contains many of the difficulties that we face when dealing with the complete
elasticity local/nonlocal model. As it is usual when one deals with nonlocal problems one needs
$u$ to be defined outside $\Omega$ by imposing that $u$ agrees with a prescribed datum, see below.

Let us introduce the models and energies that we study.

\subsubsection{{\bf First model.} Coupling local/nonlocal problems via source terms}
Our aim is to look for a scalar problem with an energy that combines local and nonlocal terms
acting in different subdomains, $\Omega_\ell$ and $\Omega_{n\ell}$, of $\Omega$.

\begin{figure}[h]\label{fig:first_model}
\begin{center}
    	\includegraphics[scale=.4]{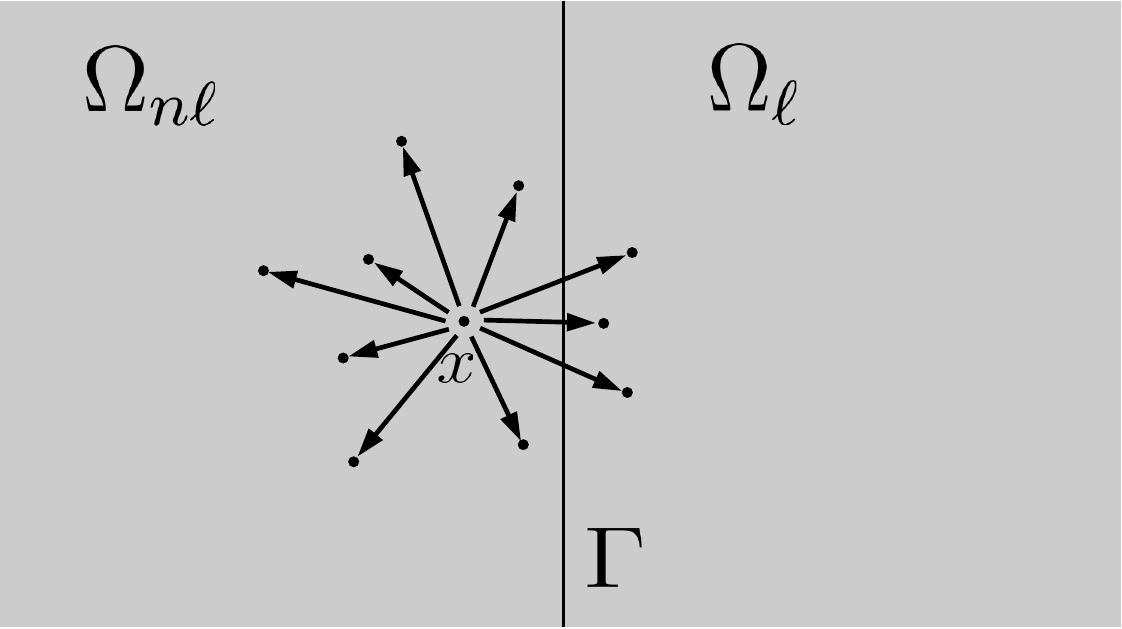}
		\caption{{Nonlocal interactions from $x$ in the first model}}
\end{center}
\end{figure}

We consider the following energy: 
\begin{equation}\label{def:funcional_dirichlet.intro.2299}
E_{i} (u):=\int_{\Omega_\ell} \frac{|\nabla u(x)|^2}{2} {\rm d}x + \frac{1}{2}\int_{\Omega_{n\ell}}\int_{\mathbb{R}^N} J(x-y)(u(y)-u(x))^2\, {\rm d}y{\rm d}x
- \int_\Omega f(x) u(x) {\rm d}x.
\end{equation}
Notice that here we are integrating in the whole $\mathbb{R}^N$, hence, to evaluate this energy we have to take a fixed exterior datum
(that we assume homogeneous for simplicity)
$
u(x) = 0$, for $ x \not\in \Omega.$
For a non-homogeneous datum $u=g_d\neq 0$ in $\mathbb{R}^N \setminus \Omega$ 
we just have to extend $g_d$ to the whole $\mathbb{R}^N$ and then consider the problem for
$w=u - g_d$. 

In this energy we have 
\begin{equation} \label{termino}
\frac{1}{2}\int_{\Omega_{n\ell}}\int_{\mathbb{R}^N} J(x-y)(u(y)-u(x))^2\, {\rm d}y{\rm d}x
\end{equation}
that can be decomposed as 
$$
\frac{1}{2}\int_{\Omega_{n\ell}}\int_{\mathbb{R}^N \setminus \Omega_\ell} J(x-y)(u(y)-u(x))^2\, {\rm d}y{\rm d}x + \frac{1}{2}\int_{\Omega_{n\ell}}\int_{\Omega_\ell} J(x-y)(u(y)-u(x))^2\, {\rm d}y{\rm d}x.
$$
The first part gives a nonlocal operator in $\Omega_{n\ell}$ while the second term encodes the coupling between 
$\Omega_{n\ell}$ and $\Omega_{\ell}$. In the double integral \eqref{termino} we observe that two points, $x$ and $y$,
interact if and only if one of them is in $\Omega_{n\ell}$.

Notice that the energy $E_{i}$ is well defined and finite for functions in the space
$$
H_{i} = \Big\{ u \in L^2 (\Omega), u|_{\Omega_\ell} \in H^1 (\Omega_\ell), 
  u = 0 \mbox{ in } \mathbb{R}^N \setminus \Omega \Big\}.
$$

Here we look for minimizers of $E_{i}$ in $H_{i}$. Our first result reads as follows:

\begin{theorem} \label{teo.1.22}
Assume $(J1)$, $(J2)$, $(1)$, $(2)$ and $(P1)$.
Given $f \in L^2 (\Omega)$ 
there exists a unique minimizer of  $E_{i}$ in $H_{i}$.
The unique minimizer is a weak solution to the equation 
\begin{equation}  \label{eq:main.Dirichlet.local.2277}
\begin{cases}
\displaystyle - f(x)=\Delta u (x) + \int_{\Omega_{n\ell}} J(x-y)(u(y)-u(x))\,{\rm d} y, &x\in \Omega_\ell,
\\[7pt]
\displaystyle \partial_\eta u(x)=0,\qquad & x\in  \partial \Omega_\ell \cap \Omega, \\[7pt]
u(x)= 0, & x \in \partial \Omega \cap \partial \Omega_\ell,
\end{cases}
\end{equation}
 in $\Omega_{\ell}$ and to the following  nonlocal equation in $\Omega_{n\ell}$, with the nonlocal Dirichlet exterior condition
\begin{equation}  \label{eq:main.Dirichlet.nonlocal.2277}
\left\{
\begin{array}{l} 
\displaystyle \!\! - f(x) = \!\! \int_{\mathbb{R}^N\setminus \Omega_{n\ell}} J(x-y)(\! u(y)-u(x)\! ) {\rm d}y
+ 2 \! \int_{\Omega_{n\ell}} J(x-y)  (\! u(y)-u(x) \!){\rm d}y,
\, x \in \Omega_{n\ell}, \\[7pt]
\displaystyle \!\! u(x)= 0,\qquad  x\in \mathbb{R}^N\setminus \Omega.
\end{array} \right.
\end{equation}
\end{theorem}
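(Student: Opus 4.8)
The plan is to obtain the minimizer by the direct method of the calculus of variations in the Hilbert space $H_{i}$, equipped with the norm $\|u\|_{H_i}^2:=\|u\|_{L^2(\Omega)}^2+\|\nabla u\|_{L^2(\Omega_\ell)}^2$ (recall that $u\equiv0$ outside $\Omega$). Writing $E_i(u)=\tfrac12 a(u,u)-\int_\Omega fu$ with
\[
a(u,v):=\int_{\Omega_\ell}\nabla u\cdot\nabla v\,{\rm d}x+\int_{\Omega_{n\ell}}\int_{\mathbb{R}^N}J(x-y)\big(u(y)-u(x)\big)\big(v(y)-v(x)\big)\,{\rm d}y\,{\rm d}x,
\]
one checks readily that $a$ is a bounded, symmetric and nonnegative bilinear form on $H_i$ and that $u\mapsto\int_\Omega fu$ is a bounded linear functional, so $E_i$ is continuous, convex and G\^{a}teaux differentiable on $H_i$. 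Existence and uniqueness of the minimizer, together with its characterization by $a(u,\varphi)=\int_\Omega f\varphi$ for every $\varphi\in H_i$, then follow from the direct method (or, equivalently, from Lax--Milgram applied to the weak formulation) as soon as one proves the Poincar\'e-type coercivity estimate
\[
a(u,u)\ge c\,\|u\|_{H_i}^2\qquad\text{for all }u\in H_i
\]
with $c>0$ independent of $u$. I expect this estimate to be the crux of the argument; everything else is routine.

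I would prove coercivity by contradiction and compactness. If it fails there is $u_n\in H_i$ with $\|u_n\|_{H_i}=1$ and $a(u_n,u_n)\to0$. Since the three pieces of the nonlocal double integral are nonnegative, this forces separately $\nabla u_n\to0$ in $L^2(\Omega_\ell)$, $\int_{\Omega_{n\ell}}\int_{\Omega_{n\ell}}J(x-y)(u_n(y)-u_n(x))^2\to0$, $\int_{\Omega_{n\ell}}\int_{\Omega_\ell}J(x-y)(u_n(y)-u_n(x))^2\to0$ and $\int_{\Omega_{n\ell}}\int_{\mathbb{R}^N\setminus\Omega}J(x-y)u_n(x)^2\to0$. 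As $\{u_n|_{\Omega_\ell}\}$ is bounded in $H^1(\Omega_\ell)$ and $\Omega_\ell$ is Lipschitz (condition $(1)$), Rellich's theorem gives, along a subsequence not relabeled, $u_n\to u$ in $L^2(\Omega_\ell)$ with $\nabla u=0$; since $\Omega_\ell$ is connected, $u$ is constant there. For the nonlocal region I would use $(J1)$ and $(J2)$: boundedness of the quadratic form $\int_{\Omega_{n\ell}}\int_{\Omega_{n\ell}}J(x-y)(\cdot)^2$ combined with the compactness of $T_J$ makes the corresponding nonlocal energy space compactly embedded in $L^2(\Omega_{n\ell})$ (a nonlocal Rellich property), so along a further subsequence $u_n\to u$ strongly in $L^2(\Omega_{n\ell})$. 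Letting $n\to\infty$ then yields $\int_{\Omega_{n\ell}}\int_{\Omega_{n\ell}}J(x-y)(u(y)-u(x))^2=0$, whence by $(J1)$ one has $u(x)=u(y)$ for a.e.\ $x,y\in\Omega_{n\ell}$ with $\|x-y\|\le2\delta$; chaining through finitely many points at mutual distance $<\delta$ and using the $\delta$-connectedness $(2)$, $u$ is constant on $\Omega_{n\ell}$ as well.

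It remains to match the two constants and to see that they vanish. Passing to the limit in the cross term and in the exterior term (both of which tend to $0$): by $(P1)$ there is a set of positive measure of pairs $(x,y)\in\Omega_{n\ell}\times\Omega_\ell$ with $\|x-y\|<\delta\le2\delta$, on which $(J1)$ gives $J(x-y)>C$ and hence $u(x)=u(y)$, so the constant on $\Omega_{n\ell}$ equals the constant on $\Omega_\ell$; and since $u=0$ on $\mathbb{R}^N\setminus\Omega$, the vanishing of $\int_{\Omega_{n\ell}}\int_{\mathbb{R}^N\setminus\Omega}J(x-y)u(x)^2$ propagates the exterior datum into $\Omega_{n\ell}$ and forces that common constant to be zero. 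Hence $u\equiv0$ on $\Omega$, which (recalling $\Omega=\Omega_\ell\cup\Omega_{n\ell}$ up to a null set, so that $u_n\to u$ strongly in $L^2(\Omega)$, together with $\nabla u_n\to0$) contradicts $\|u\|_{L^2(\Omega)}=1$. This establishes the Poincar\'e inequality, and therefore, by coercivity plus weak lower semicontinuity, a minimizer exists, and, by the strict convexity induced by the coercive form $a$, it is unique.

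Finally, the variational identity $a(u,\varphi)=\int_\Omega f\varphi$, $\varphi\in H_i$, is turned into \eqref{eq:main.Dirichlet.local.2277}--\eqref{eq:main.Dirichlet.nonlocal.2277} by testing with suitable $\varphi$. For $\varphi\in C_c^\infty(\Omega_\ell)$ the cross term becomes a volume term tested against $\varphi$ and an integration by parts in the gradient term gives the interior equation on $\Omega_\ell$ in the distributional sense (using that $J$ is even); allowing $\varphi\in H_i$ not to vanish on $\partial\Omega_\ell\cap\Omega$ and integrating by parts produces the natural Neumann condition $\partial_\eta u=0$ there, while the Dirichlet condition on $\partial\Omega\cap\partial\Omega_\ell$ and the exterior condition on $\mathbb{R}^N\setminus\Omega$ are encoded in $H_i$. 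For $\varphi\in H_i$ supported in $\Omega_{n\ell}$, writing $\int_{\mathbb{R}^N}=\int_{\Omega_{n\ell}}+\int_{\mathbb{R}^N\setminus\Omega_{n\ell}}$ and symmetrizing the $\Omega_{n\ell}\times\Omega_{n\ell}$ contribution (which is where the factor $2$ in \eqref{eq:main.Dirichlet.nonlocal.2277} comes from) yields the nonlocal equation.
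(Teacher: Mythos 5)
The overall plan is right and matches the paper's strategy: reduce everything to a Poincar\'e-type coercivity estimate, prove it by contradiction and compactness, then use the direct method and differentiate the energy to get the Euler--Lagrange system. Your treatment of the local part (Rellich on $\Omega_\ell$), the identification of the two constants $k_1=k_2$, the use of the exterior Dirichlet datum to force $k_1=k_2=0$, and the derivation of the equations are all in line with what the paper does.

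There is, however, a genuine gap in the compactness step on $\Omega_{n\ell}$. You assert that ``boundedness of the quadratic form $\int_{\Omega_{n\ell}}\int_{\Omega_{n\ell}}J(x-y)(\cdot)^2$ combined with the compactness of $T_J$ makes the corresponding nonlocal energy space compactly embedded in $L^2(\Omega_{n\ell})$ (a nonlocal Rellich property).'' This is false for the class of kernels allowed by $(J1)$--$(J2)$, which includes bounded integrable kernels such as $J=\chi_{B_{2\delta}}$. For such $J$ the ``nonlocal energy space'' with norm $\|u\|_{L^2}^2+\int\int J(u(x)-u(y))^2$ is just $L^2(\Omega_{n\ell})$ with an equivalent norm, and there is no compact embedding; a bounded sequence with bounded nonlocal energy need not have a strongly convergent subsequence. (This is one of the basic differences between the local Dirichlet energy and nonlocal energies with integrable kernels, see \cite{ElLibro}.) Compactness of $T_J$ gives you that $u_n\rightharpoonup u$ weakly implies $T_J u_n\to T_J u$ strongly; it does not upgrade weak to strong convergence of $u_n$ itself.

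The paper circumvents this precisely because it cannot appeal to a nonlocal Rellich theorem. It first works only with the weak limit $u$ on $\Omega_{n\ell}$: weak lower semicontinuity already forces $\int_{\Omega_{n\ell}}\int_{\Omega_{n\ell}}J(u(x)-u(y))^2=0$, hence $u\equiv k_2$ by \ref{lema:J0implica_cte}, and then $k_1=k_2=0$. Only at that point is the strong $L^2(\Omega_{n\ell})$ convergence established, via the dedicated propagation lemma \ref{lema:contagio_escalar}: one shows $\int_{\Omega_{n\ell}}\int_{\Omega_\ell}J(x-y)u_n(x)^2\to 0$ by expanding the square and using the already-known strong convergence on $\Omega_\ell$ together with boundedness/compactness of $T_J$; this yields strong convergence on the strip $A^0_\delta=\{x\in\Omega_{n\ell}:\mathrm{dist}(x,\Omega_\ell)<\delta\}$ via $(J1)$ and $(P1)$, and then one iterates through the strips $A^j_\delta$ using the $\delta$-connectedness $(2)$ to cover all of $\Omega_{n\ell}$ in finitely many steps. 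So the strong convergence on $\Omega_{n\ell}$ is not an embedding theorem; it is seeded by the local Rellich theorem on $\Omega_\ell$ and transported across the interface and through $\Omega_{n\ell}$ by the cross term. Your outline is missing this mechanism, and if you replace the nonlocal Rellich claim by something of this form the proof goes through exactly as in the paper.
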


\begin{remark} \label{rem.1.5} {\rm 
We can consider more general energies such as 
\begin{equation}\label{def:funcional_dirichlet.intro.2277}
\begin{array}{l}
\displaystyle E_{i} (u):=\int_{\Omega_\ell} \frac{|\nabla u (x)|^2}{2} {\rm d}x 
+ \frac{1}{2}\int_{\Omega_{n\ell}}\int_{\mathbb{R}^N \setminus \Omega_\ell} J(x-y)(u(y)-u(x))^2\, {\rm d}y{\rm d}x \\[7pt]
\qquad\qquad \quad \displaystyle  + \frac{1}{2}\int_{\Omega_{n\ell}}\int_{\Omega_\ell} G(x-y)(u(y)-u(x))^2\, {\rm d}y{\rm d}x
- \int_\Omega f (x) u(x) {\rm d}x.
\end{array}
\end{equation}
Here there are two different kernels involved, $J$ and $G$. The use of a different kernel for the interactions
between $\Omega_\ell$ and $\Omega_{n\ell}$ can be handled with similar arguments (see below, where
in our second model we deal with two different kernels, the second one acting in sets of different dimension). 
In this first model, to simplify the presentation, we prefer to consider only one kernel and take $J=G$ in \eqref{def:funcional_dirichlet.intro.2277}.}
\end{remark}

\begin{remark} A probabilistic interpretation of this model in terms of particle systems runs as follows:
take an exponential clock that controls the jumps of the particles, 
in the local region $\Omega_{\ell}$ the particles move continuously according to Brownian motion 
(with a reflexion at $\Gamma$, the part of the boundary of $\Omega_\ell$ inside $\Omega$)
and when the clock rings a new position is sorted according to the kernel 
$J(x-\cdot)$, then they jump if the new position is in the nonlocal region $\Omega_{n\ell}$ 
(if the
sorted position lies outside $\Omega_{n\ell}$ then particles just continue moving by Brownian motion ignoring the
ring of the clock); while in the nonlocal region $\Omega_{n\ell}$ the particles stay still until the clock rings
and then they jump using the kernel $J(x-\cdot)$ to select the new position in the whole $\mathbb{R}^N$.
The particle gets killed when it arrives to $\mathbb{R}^N \setminus \Omega$ (coming from $\Omega_{\ell}$ by
Brownian motion or jumping from $\Omega_{n\ell}$).

The minimizer to our functional $E_{i}(u)$ gives the stationary distribution of particles provided
that there is an external source $f$ (that adds particles where $f>0$ and remove particles where $f<0$). 
\end{remark} 

\begin{remark} We can also consider the energy
\begin{equation}\label{def:funcional_dirichlet.intro.229944}
\widetilde{E}_{i} (u):=\int_{\Omega_\ell} \frac{|\nabla u(x)|^2}{2} {\rm d}x + \frac{1}{2}\int_{\mathbb{R}^N \setminus \Omega_{\ell}}
\int_{\mathbb{R}^N} J(x-y)(u(y)-u(x))^2\, {\rm d}y{\rm d}x
- \int_\Omega f (x) u(x) {\rm d}x.
\end{equation}
Notice that now the double integral takes place in the set $(\mathbb{R}^N \setminus \Omega_{\ell}) \times \mathbb{R}^N$.
Here we have that two points, $x$ and $y$,
interact if and only if one of them is not in $\Omega_{\ell}$ (therefore, now there are nonlocal interactions between $\Omega_{\ell}$
and $\mathbb{R}^N \setminus \Omega$ that were not present in the previous energy $E_i$).
.

With the same ideas used to deal with ${E}_{i}$ we can show that there is a unique minimizer of $\widetilde{E}_{i}$ in 
$
H_{i} = \{ u \in L^2 (\Omega), u|_{\Omega_\ell} \in H^1 (\Omega_\ell), 
 u = 0 \mbox{ in } \mathbb{R}^N \setminus \Omega \}.
$
In this case the limit problem reads as
\begin{equation}  \label{eq:main.Dirichlet.local.2277.878}
\begin{cases}
\displaystyle - f(x)=\Delta u (x) + \int_{\mathbb{R}^N \setminus \Omega_{\ell}} J(x-y)(u(y)-u(x))\,{\rm d} y, &x\in \Omega_\ell,\, t>0,
\\[7pt]
\displaystyle \partial_\eta u(x)=0,\qquad & x\in  \partial \Omega_\ell \cap \Omega, \\[7pt]
u(x)= 0, & x \in \partial \Omega \cap \partial \Omega_\ell,
\end{cases}
\end{equation}
and 
\begin{equation}  \label{eq:main.Dirichlet.nonlocal.2277.878}
\left\{
\begin{array}{l}
\displaystyle \!\!- f(x) = \!\! \int_{\mathbb{R}^N\setminus \Omega_{n\ell}} J(x-y)(\! u(y)-u(x) \!) {\rm d}y
+ 2 \! \int_{\Omega_{n\ell}} J(x-y)  (\! u(y)-u(x) \!){\rm d}y,
\, x \in \Omega_{n\ell}, \\[7pt]
\displaystyle u(x)= 0,\qquad  x\in \mathbb{R}^N\setminus \Omega.
\end{array} \right.
\end{equation}
\end{remark}

\subsubsection{{\bf Second model.} Coupling local/nonlocal problems via flux terms}
For the second model we fix a smooth hypersurface $\Gamma \subset \partial {\Omega_\ell} \cap \Omega,$
and use a different nonnegative kernel $G:\Gamma \times \Omega_{n\ell} \mapsto \mathbb{R}$
that will control the coupling between the local and the nonlocal regions.

\begin{figure}[h]\label{fig:second_model}
\begin{center}
    	\includegraphics[scale=.4]{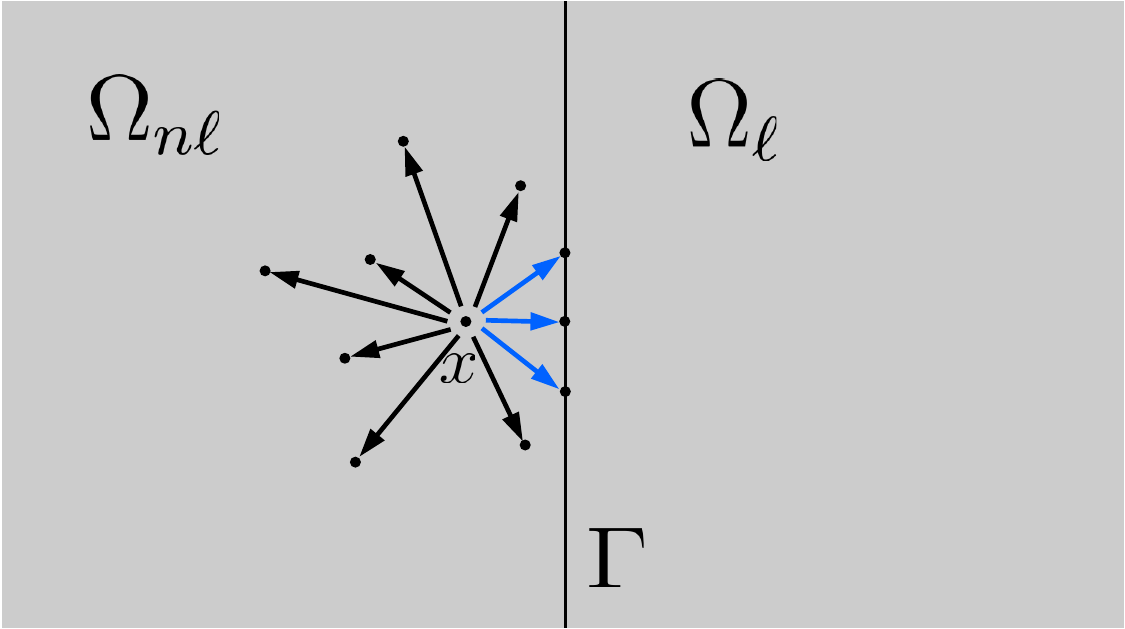}
	\caption{{Nonlocal interactions from $x$ in the second model}}
\end{center}
\end{figure}

Our aim now is to look for a scalar problem with an energy that combines local and nonlocal terms 
acting in different subdomains, $\Omega_\ell$ and $\Omega_{n\ell}$, of $\Omega$, but now the coupling
is made balancing the fluxes across the two subdomains, in the local domain we use the flux across $\Gamma$.

For a scalar function 
$u: \Omega \mapsto \mathbb{R}$, $u\in L^2 (\Omega)$ we consider 
\begin{equation}\label{def:funcional_dirichlet.intro.2257}
\begin{array}{l}
\displaystyle
E_{ii}(u):=\int_{\Omega_\ell} \frac{|\nabla u (x)|^2}{2} {\rm d}x + \frac{1}{2}\int_{\Omega_{n\ell}}\int_{\mathbb{R}^N\setminus \Omega_{\ell} }
J(x-y)(u(y)-u(x))^2\, {\rm d}y{\rm d}x \\[7pt]
\qquad \qquad \quad \displaystyle 
+ \frac{1}{2}\int_{\Omega_{nl}}\int_{\Gamma}G(x,z)\left(u(x)-u(z)\right)^2 {\rm d}\sigma(z) dx
- \int_\Omega f (x) u(x) {\rm d}x.
\end{array}
\end{equation}

Notice that here we are integrating again in the whole $\mathbb{R}^N$, hence, to evaluate this energy we have to take a fixed 
exterior datum 
$
u(x) = 0$, for $x \not\in \Omega.$
As before, we just considered homogeneous Dirichlet conditions. 
Hence, we look for minimizers in the natural set associated with the energy and the Dirichlet exterior 
boundary condition,
$$
H_{ii} = \Big\{ u \in L^2 (\Omega), u|_{\Omega_\ell} =u  \in H^1 (\Omega_\ell), 
 u =0 \mbox{ in } \mathbb{R}^N \setminus \Omega \Big\}.
$$

Our result for this second case reads as follows,

\begin{theorem} \label{teo.1.44}
Assume $(J1)$, $(J2)$, $(1)$, $(2)$, $(G1)$ and $(P2)$.
Given $f \in L^2 (\Omega)$ there exists a unique minimizer $u$ of $E_{ii}$ in $H_{ii}$.
The unique minimizer $u$ is a weak solution to a local equation in $\Omega_\ell$,
\begin{equation}  \label{eq:main.Dirichlet.local.44}
\begin{cases}
\displaystyle  - f(x)=\Delta u (x)  &x\in \Omega_\ell,
\\[7pt]
\displaystyle  u(x)=0,\qquad & x\in \partial \Omega_\ell \cap \partial \Omega, \\[7pt]
\displaystyle \frac{\partial u}{\partial \eta} (x)=  \chi_\Gamma (x)   \int_{\Omega_{nl}} G(x,y)( u(y) - u(x)) {\rm d}y,     & 
x \in \partial \Omega_\ell \cap \Omega ,
\end{cases}
\end{equation}
and a nonlocal one in $\Omega_{n\ell}$,
\begin{equation}  \label{eq:main.Dirichlet.nonlocal.44}
\begin{cases}
\displaystyle  - f(x) = 2 \int_{\Omega_{n\ell}} J(x-y)(u(y)-u(x))\, {\rm d}y
+ \int_{\Gamma} G(y,x)  (u(x)-u(y))\, {\rm d}\sigma(y) \\[7pt]
\displaystyle  \qquad \qquad -
\int_{\mathbb{R}^N\setminus \Omega } J(x-y) u(x) \,{\rm d}y,
\qquad x \in \Omega_{n\ell}, \\[7pt]
\displaystyle u(x)= 0,\qquad  x\in \mathbb{R}^N\setminus \Omega.
\end{cases}
\end{equation}
\end{theorem}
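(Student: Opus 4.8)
The plan is to obtain the minimizer by the direct method in the calculus of variations and then to recover the coupled system \eqref{eq:main.Dirichlet.local.44}--\eqref{eq:main.Dirichlet.nonlocal.44} as its Euler--Lagrange equations. First I would note that $H_{ii}$, with norm $\|u\|^2:=\|u\|_{H^1(\Omega_\ell)}^2+\|u\|_{L^2(\Omega_{n\ell})}^2$, is a Hilbert space (the exterior condition $u=0$ on $\mathbb{R}^N\setminus\Omega$ and the trace condition on $\partial\Omega_\ell\cap\partial\Omega$ pass to weak limits), and that $E_{ii}$ is finite on $H_{ii}$: the Dirichlet term is finite because $u|_{\Omega_\ell}\in H^1(\Omega_\ell)$; the mixed term is controlled by $\|u|_\Gamma\|_{L^2(\Gamma)}$ (trace inequality) and $\|u\|_{L^2(\Omega_{n\ell})}$ together with the integrability of $G$; and the nonlocal term is handled by expanding the square and invoking $(J2)$ together with the standing integrability of $J$ (after symmetrizing $J$, which does not change the quadratic form). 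Being a sum of nonnegative quadratic forms and a bounded linear functional, $E_{ii}$ is convex.

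The key analytic point will be a Poincar\'e-type inequality: there is $\alpha>0$ with
\[
\alpha\|u\|^2\le\int_{\Omega_\ell}|\nabla u|^2\,{\rm d}x+\int_{\Omega_{n\ell}}\int_{\mathbb{R}^N\setminus\Omega_\ell}J(x-y)(u(y)-u(x))^2\,{\rm d}y{\rm d}x+\int_{\Omega_{n\ell}}\int_{\Gamma}G(x,z)(u(x)-u(z))^2\,{\rm d}\sigma(z){\rm d}x
\]
for all $u\in H_{ii}$. I would prove it by contradiction and compactness: take $u_n\in H_{ii}$ with $\|u_n\|=1$ and right-hand side tending to $0$; boundedness in $H^1(\Omega_\ell)\times L^2(\Omega_{n\ell})$ gives a weak limit $u$; Rellich yields $u_n\to u$ in $L^2(\Omega_\ell)$ and convergence of the traces in $L^2(\Gamma)$, while $(J2)$ upgrades the weak convergence on $\Omega_{n\ell}$ to the strong convergence of the convolution cross term, so $u_n\to u$ strongly and the right-hand side vanishes for $u$. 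Then $\nabla u\equiv 0$ on the connected set $\Omega_\ell$ by $(1)$, so $u\equiv c$ there; the vanishing of the $J$-term with $(J1)$ and the $\delta$-connectedness $(2)$ forces $u$ constant on $\Omega_{n\ell}$; the vanishing of the $G$-term with $(G1)$ and $(P2)$ forces that constant to equal the trace $c$ on $\Gamma$; finally the homogeneous Dirichlet data (either the trace $u=0$ on a piece of $\partial\Omega_\ell\cap\partial\Omega$, or $u=0$ on $\mathbb{R}^N\setminus\Omega$ reached through the $J$-term linking $\Omega_{n\ell}$ to the exterior) forces $c=0$, contradicting $\|u\|=1$.

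Granting this inequality, $E_{ii}(u)\ge\frac{\alpha}{2}\|u\|^2-C\|f\|_{L^2}\|u\|$ is coercive, and being convex and strongly continuous it is weakly lower semicontinuous on $H_{ii}$; hence a minimizing sequence is bounded, a weak limit exists, and it is a minimizer, unique by the strict convexity coming from the Poincar\'e inequality. For the equations I would compute $\frac{d}{dt}E_{ii}(u+t\varphi)|_{t=0}=0$ for $\varphi\in H_{ii}$, obtaining the weak formulation
\[
\int_{\Omega_\ell}\nabla u\cdot\nabla\varphi+\int_{\Omega_{n\ell}}\int_{\mathbb{R}^N\setminus\Omega_\ell}J(x-y)(u(y)-u(x))(\varphi(y)-\varphi(x))+\int_{\Omega_{n\ell}}\int_{\Gamma}G(x,z)(u(x)-u(z))(\varphi(x)-\varphi(z))=\int_\Omega f\varphi,
\]
and then specialize test functions: $\varphi$ compactly supported in $\Omega_\ell$ gives $-\Delta u=f$ in $\Omega_\ell$; $\varphi$ supported in $\Omega_{n\ell}$, after symmetrizing the $\Omega_{n\ell}\times\Omega_{n\ell}$ contribution using the evenness of $J$ and splitting off the $\mathbb{R}^N\setminus\Omega$ part where $u=0$, gives the nonlocal equation \eqref{eq:main.Dirichlet.nonlocal.44}; a general $\varphi$, after integrating the Dirichlet term by parts and using $-\Delta u=f$ already obtained in $\Omega_\ell$, isolates $\int_{\partial\Omega_\ell\cap\Omega}\partial_\eta u\,\varphi$ and matches it against the $\Gamma$-integral, yielding the flux condition with the $\chi_\Gamma$ source in \eqref{eq:main.Dirichlet.local.44}; the conditions $u=0$ on $\partial\Omega_\ell\cap\partial\Omega$ and $u=0$ on $\mathbb{R}^N\setminus\Omega$ are built into $H_{ii}$. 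The main obstacle is the Poincar\'e inequality, namely fusing the three connectivity mechanisms ($\delta$-chains inside $\Omega_{n\ell}$, the coupling across the lower-dimensional interface $\Gamma$, and the homogeneous exterior data) into one quantitative bound and making the compactness step rigorous through $(J2)$; the remaining steps are the routine machinery of the direct method plus careful bookkeeping of the test-function localizations.
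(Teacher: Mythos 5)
Your overall plan (Poincar\'e-type inequality by contradiction and compactness, then the direct method, then Euler--Lagrange) is the same as the paper's. The Euler--Lagrange derivation, the choice of space $H_{ii}$, and the identification of the limit constants $k_1=k_2$ from the vanishing of the three quadratic terms are all fine. The gap is in the step you dismiss with ``$(J2)$ upgrades the weak convergence on $\Omega_{n\ell}$ to the strong convergence of the convolution cross term, so $u_n\to u$ strongly.'' That implication is false: compactness of $T_J$ gives $T_J u_n\to T_J u$ strongly, but says nothing about strong convergence of $u_n$ itself. In the contradiction argument you have, at this stage, only $u_n\rightharpoonup 0$ weakly in $L^2(\Omega_{n\ell})$ together with $u_n\to 0$ strongly in $H^1(\Omega_\ell)$, and weak convergence alone is compatible with $\|u_n\|=1$ forever. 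Without a mechanism forcing strong $L^2(\Omega_{n\ell})$ convergence, the contradiction never arrives.

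There is a second, subtler point that makes the repair nontrivial here: in $E_{ii}$ the double $J$--integral runs over $\Omega_{n\ell}\times(\mathbb{R}^N\setminus\Omega_\ell)$, so there is \emph{no} $J$-cross term between $\Omega_{n\ell}$ and $\Omega_\ell$ available. This means the ``contagion'' argument of \ref{lema:contagio_escalar}, which seeds strong convergence on $A^0_\delta=\{x\in\Omega_{n\ell}:\operatorname{dist}(x,\Omega_\ell)<\delta\}$ from the strong convergence on $\Omega_\ell$, does not apply off the shelf. What the paper does instead is to seed the strong convergence from $\Gamma$: expanding the vanishing $G$-term and using strong trace convergence $u_n|_\Gamma\to 0$ in $L^2(\Gamma)$ (from Rellich in $H^1(\Omega_\ell)$) one gets
\[
\lim_{n\to\infty}\int_{\Omega_{n\ell}}|u_n(x)|^2\int_\Gamma G(x,z)\,{\rm d}\sigma(z)\,{\rm d}x=0,
\]
which by $(G1)$ and $(P2)$ forces $u_n\to 0$ strongly in $L^2(B_1)$ with $B_1=\{x\in\Omega_{n\ell}:\int_\Gamma G(x,z)\,{\rm d}\sigma(z)>0\}$, a set of positive measure. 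One then expands the vanishing $J$-term over $B_1\times B_2$ with $B_2=\{x\in\Omega_{n\ell}:\operatorname{dist}(x,B_1)<\delta\}$ and uses $(J1)$ and $\delta$-connectedness to propagate strong convergence step by step over finitely many $\delta$-neighbourhoods until all of $\Omega_{n\ell}$ is covered. You should replace the $(J2)$ assertion with this $\Gamma$-seeded propagation; otherwise the coercivity lemma, and hence the whole theorem, is not proved.
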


Notice that here the coupling term appears as a flux boundary condition for the local part of the problem.

\begin{remark}
A probabilistic interpretation of this model in terms of particle systems runs as follows:
in the local region $\Omega_{\ell}$ the particles move according to Brownian motion 
and when they arrive to the interface $\Gamma$ they pass to the nonlocal region; in the nonlocal region, as before, we
take an exponential clock that controls the jumps of the particles, 
and when the clock rings a new position is sorted according to the kernel 
$J(x-\cdot)$ (for the movements from $\Omega_{n\ell}$ to $\mathbb{R}^N\setminus \Omega$) or according to $G(\cdot,x)$ 
for jumping back to the local region entering at a point in the hypersurface, $\Gamma$.
The particle gets killed when it arrives to $\mathbb{R}^N \setminus \Omega$.

As before, the minimizer to our functional $E_{ii}(u)$ gives the stationary distribution of particles provided
that there is an external source $f$ (that adds particles where $f>0$ and remove particles where $f<0$). 
\end{remark}

\subsection{Vectorial problems} 

Following the scalar case, we look for functions 
$U:\Omega \to \RR^N,$
that are minimizers of local/nonlocal
energies
restricting our interest to  `elastic' related models. As in the scalar case, one needs
$U$ to be defined outside $\Omega$ (i.e. in the whole  $\mathbb{R}^N$) by imposing a prescribed datum, see below.
For the sake of simplicity  we  consider in detail only
 the quadratic functional associated to the linearized equations of elasticity. Some possible extensions for more general functionals 
 are pointed out in \ref{sec:caso_mas_gral}.

\subsubsection{{\bf First model.}Coupling local/nonlocal elasticity models via source terms}
In this case, we consider the local/nonlocal energy 
\begin{equation}
\label{def:funcional_Dirichlet.intro.Vec}
E_{I}(U):=
  \Xi (U) +
  \frac{1}{2}\int_{\Omega_{n\ell}}\int_{\mathbb{R}^N} 
J(x-y)|(x-y) \cdot (U(y)-U(x))|^2\,{\rm d}y{\rm d}x  
- \int_\Omega F (x) U (x) {\rm d}x,
\end{equation}
with
\begin{equation}
\label{eq:energy_linearized.intro}
\Xi(U):=\mu \int_{\Omega_\ell} |\mathcal{E} (U) (x) |^2 {\rm d}x + \frac{\lambda}{2}  \int_{\Omega_{\ell}} (div (U) (x))^2 {\rm d}x, 
\end{equation}
 and where  $
\LE(U)$ stands for the so-called linearized \emph{strain} tensor
\begin{equation}
 \LE(U)=\frac{\nabla U+\nabla U^T}{2},
 \label{eq:defidestrain}
\end{equation}
and $\mu,\lambda$, are the so-called Lam\'e coefficients (that we assume to be positive).

Since we are integrating in the whole $\mathbb{R}^N$, we have to take a fixed \emph{Dirichlet} exterior datum
$
U(x) = G_D(x)$, for $ x \not\in \Omega.
$
As we did before for the scalar case, we write in detail the  \emph{homogenous} case, i.e. we assume
$G_D \equiv 0.$
In this context, we look for minimizers in the space
$$
H_I = \Big\{ U: U|_{\Omega} \in L^2 (\Omega:\mathbb{R}^N), u|_{\Omega_\ell} \in H^1 (\Omega_\ell:\mathbb{R}^N), 
U = 0 \mbox{ in } \mathbb{R}^N \setminus \Omega \Big\}.
$$

For this model we have the following theorem, where we use the standard notation $\sigma(x)$  for the \emph{linearized}
stress tensor $\sigma=2\mu \mathcal{E}(U)+\lambda div (U)\,Id$ and where  $Div$ denotes the -capitalized- divergence operator acting on matrices in a row-wise sense.

\begin{theorem} \label{teo.elast.88}
Assume $(J1)$, $(J2)$, $(1)$, $(2)$ and $(P1)$.
Given $F \in L^2 (\Omega)$ there exists a unique minimizer $U$ of $E_{I}$ in $H_I$. Moreover, the minimizer $U$ of $E_{I}$ 
in $H_I$ is a weak solution to 
\begin{equation}  \label{eq:main.Dirichlet.local.elast}
\begin{cases}
\displaystyle  F(x)=Div( \sigma (x)) + \int_{\Omega_{n\ell}} J(x-y)[(x-y)\otimes (x-y)] (U(x)-U(y))\,{\rm d} y, &x\in \Omega_\ell,
\\[7pt]
\displaystyle \sigma(x) \eta=0,\qquad x\in \Gamma=\partial \Omega_{\ell} \cap \partial \Omega_{n\ell},&  \\[7pt]
U(x)= 0, \qquad x \in \partial \Omega \cap \partial \Omega_\ell,& 
\end{cases}
\end{equation}
in $\Omega_\ell$ and   
\begin{equation}
\label{eq:main.Dirichlet.nonlocal.elast}
\begin{split}
  F(x)&= \int_{\mathbb{R}^N\setminus \Omega_{n\ell}} J(x-y)\left[(x-y)\otimes (x-y)\right] (U(x)-U(y))\, {\rm d}y
\\
& \qquad +2 \int_{\Omega_{n\ell}} J(x-y)\left[(x-y)\otimes (x-y)\right]  (U(x)-U(y))\, {\rm d}y,
\end{split}
\end{equation}
 in $\Omega_{n\ell}$.
\end{theorem}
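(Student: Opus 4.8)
The plan is to follow the direct method of the calculus of variations, exactly as in the scalar case of Theorem \ref{teo.1.22}, adapting each step to the vectorial (linearized elasticity) setting. First I would check that $E_I$ is well defined and finite on $H_I$: the local term $\Xi(U)$ is finite because $U|_{\Omega_\ell}\in H^1(\Omega_\ell:\mathbb{R}^N)$, the source term is finite by Cauchy--Schwarz since $F\in L^2(\Omega)$, and the nonlocal term is controlled by $\|U\|_{L^2(\mathbb{R}^N)}^2$ using that $J$ is essentially bounded on the relevant horizon (from $(J1)$ and $(J2)$) and that $|x-y|$ is bounded on $\Omega_{n\ell}\times\mathbb{R}^N$ restricted to where $J$ does not vanish — or more robustly one splits the double integral and absorbs the far-field part. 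Next I would establish \emph{coercivity} of $E_I$ on $H_I$. This is the main obstacle and the place where conditions $(1)$, $(2)$, $(P1)$, $(J1)$ really enter: one must show that the quadratic part of $E_I$ controls $\|U\|_{H^1(\Omega_\ell)}^2 + \|U\|_{L^2(\Omega_{n\ell})}^2$ up to the linear term. In $\Omega_\ell$ one uses Korn's inequality together with the homogeneous Dirichlet condition on $\partial\Omega\cap\partial\Omega_\ell$ to pass from $\|\mathcal{E}(U)\|_{L^2(\Omega_\ell)}^2$ to $\|U\|_{H^1(\Omega_\ell)}^2$; but Korn alone is delicate here because the Dirichlet portion of $\partial\Omega_\ell$ may be empty, so one has to combine it with the nonlocal control coming from the coupling term and a compactness/contradiction argument.

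Concretely, for coercivity I would argue by contradiction: take a minimizing-type sequence $U_k$ with the quadratic part bounded but $\|U_k\|\to\infty$, normalize $V_k=U_k/\|U_k\|$, and extract weak limits — $V_k|_{\Omega_\ell}\rightharpoonup V$ in $H^1(\Omega_\ell:\mathbb{R}^N)$ and $V_k\rightharpoonup V$ in $L^2(\Omega_{n\ell}:\mathbb{R}^N)$, with strong $L^2$ convergence on $\Omega_\ell$ by Rellich. The vanishing of the quadratic form forces $\mathcal{E}(V)\equiv 0$ on $\Omega_\ell$, hence $V$ is an infinitesimal rigid motion $V(x)=Ax+b$ with $A$ skew on each connected component of $\Omega_\ell$ (here $(1)$, connectedness of $\Omega_\ell$, is used); it also forces the nonlocal quadratic term to vanish, i.e. $(x-y)\cdot(V(y)-V(x))=0$ for a.e.\ pair $(x,y)$ with $J(x-y)>0$ and $x\in\Omega_{n\ell}$. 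Using $(J1)$ (positivity of $J$ on the ball of radius $2\delta$), the $\delta$-connectedness of $\Omega_{n\ell}$ from $(2)$, the chain-of-points characterization in Definition \ref{def:deltaconnected}, and $(P1)$ linking $\Omega_{n\ell}$ to $\Omega_\ell$ and to $\mathbb{R}^N\setminus\Omega$ (where $V=0$), I would propagate the constraint to conclude $V\equiv 0$ on all of $\mathbb{R}^N$; the key pointwise fact is that if $(x-y)\cdot(V(y)-V(x))=0$ for all $y$ in a ball around $x$ then $V$ is locally an infinitesimal rigid motion there too, and matching it with the zero exterior datum through the connectivity chain kills it. This contradicts $\|V_k\|=1$ with strong convergence on a set of positive measure, giving coercivity.

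With coercivity and the obvious \emph{weak lower semicontinuity} of $E_I$ (the two quadratic terms are convex and strongly continuous, hence weakly l.s.c., and the linear source term is weakly continuous), the direct method yields existence of a minimizer $U\in H_I$. \emph{Uniqueness} follows from strict convexity of the quadratic part of $E_I$ on $H_I$: if $U_1,U_2$ are both minimizers, the parallelogram identity shows the quadratic form vanishes on $U_1-U_2$, and the same propagation argument as in the coercivity step (now applied to $W=U_1-U_2$, which already lies in $H_I$ so $W=0$ outside $\Omega$) forces $W\equiv 0$.

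Finally, to identify the Euler--Lagrange system, I would compute the first variation: for any $\Phi\in H_I$, $\frac{d}{dt}E_I(U+t\Phi)\big|_{t=0}=0$ gives the weak formulation
\begin{equation*}
\int_{\Omega_\ell}\big(2\mu\,\mathcal{E}(U):\mathcal{E}(\Phi)+\lambda\,\mathrm{div}(U)\,\mathrm{div}(\Phi)\big)\,{\rm d}x
+\int_{\Omega_{n\ell}}\!\int_{\mathbb{R}^N}J(x-y)\big[(x-y)\cdot(U(y)-U(x))\big]\big[(x-y)\cdot(\Phi(y)-\Phi(x))\big]\,{\rm d}y{\rm d}x
=\int_\Omega F\cdot\Phi\,{\rm d}x.
\end{equation*}
Choosing $\Phi$ supported in $\Omega_\ell$ and integrating by parts — using $\mathrm{Div}\,\sigma$ for the local part and recognizing that for $x\in\Omega_\ell$ only pairs with $y\in\Omega_{n\ell}$ contribute to the double integral — recovers the interior equation of \eqref{eq:main.Dirichlet.local.elast} with the tensor $(x-y)\otimes(x-y)$; the boundary terms from integration by parts, together with test functions not vanishing on $\partial\Omega_\ell\cap\partial\Omega_{n\ell}$, give the natural (zero-traction) condition $\sigma(x)\eta=0$ on $\Gamma$, while the Dirichlet condition on $\partial\Omega\cap\partial\Omega_\ell$ is built into $H_I$. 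Choosing $\Phi$ supported in $\Omega_{n\ell}$ and carefully splitting $\mathbb{R}^N=\Omega_{n\ell}\cup(\mathbb{R}^N\setminus\Omega_{n\ell})$ in the double integral — noting the symmetric pairing doubles the $\Omega_{n\ell}\times\Omega_{n\ell}$ contribution, which is the origin of the factor $2$ — yields \eqref{eq:main.Dirichlet.nonlocal.elast}. This identification step is essentially bookkeeping; the genuine difficulty throughout remains the coercivity/propagation argument, where vectorial Korn's inequality must be meshed with the nonlocal connectivity mechanism.
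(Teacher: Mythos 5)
Your overall strategy is the same as the paper's: coercivity by contradiction, Korn's inequality plus the nonlocal rigidity of $(x-y)\cdot(U(x)-U(y))$ to characterize the limit, propagation to kill the rigid motion, then the direct method, strict convexity, and the first-variation computation. The Euler--Lagrange derivation and the identification of the interior equation, the natural traction condition on $\Gamma$, and the nonlocal equation with the factor $2$ are all correct.

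However, there is a genuine gap in the last step of your coercivity argument. After normalizing $V_k$ you correctly get $V_k\to 0$ strongly in $H^1(\Omega_\ell)$ (Rellich plus Korn plus the Dirichlet/coupling condition) and $V_k\rightharpoonup 0$ weakly in $L^2(\Omega_{n\ell})$. You then claim ``this contradicts $\|V_k\|_{L^2(\Omega)}=1$ with strong convergence on a set of positive measure.'' But the set where you have strong convergence is only $\Omega_\ell$, and nothing yet prevents the $L^2$ mass of $V_k$ from concentrating entirely on $\Omega_{n\ell}$: weak convergence to $0$ there is perfectly compatible with $\int_{\Omega_{n\ell}}|V_k|^2\to 1$. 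To close the argument you must \emph{upgrade} the weak convergence in $L^2(\Omega_{n\ell})$ to strong convergence. In the vectorial setting this is not a trivial consequence of the bilinear form vanishing, because the nonlocal term $J(x-y)|(x-y)\cdot(V_k(x)-V_k(y))|^2$ controls only the projection of $V_k(x)$ along the single direction $x-y$; one has to integrate over a cone of directions $y-x$ with opening bounded below (using that $\Omega_\ell$ has positive measure inside $B_{2\delta}(x)$) to recover a lower bound on $|V_k(x)|^2$, and then iterate this in layers $A^0_\delta, A^1_\delta,\ldots$ across the $\delta$-connected domain $\Omega_{n\ell}$. This is precisely Lemma \ref{lema:contagio_vectorial} in the paper, and it is substantially harder than its scalar counterpart (Lemma \ref{lema:contagio_escalar}) for exactly this directional reason. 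Without this propagation-of-strong-convergence step, the contradiction does not follow and coercivity is not established; everything downstream (existence, and the quantitative form of uniqueness) depends on it.

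A smaller point: your statement that the quadratic form vanishing ``forces $\mathcal{E}(V)\equiv 0$ on $\Omega_\ell$, hence $V$ is an infinitesimal rigid motion'' and the subsequent matching of the rigid motions $K_1$ (from $\Omega_\ell$) and $K_2$ (from $\Omega_{n\ell}$) is correct in spirit, but the vectorial matching step — showing that $(x-y)\cdot(K_1(y)-K_2(x))=0$ on an open set forces $K_1=K_2$ — needs a short algebraic argument selecting linearly independent directions (as in the paper's proof of Lemma \ref{lema.1.99}); ``matching through the connectivity chain'' alone does not quite dispatch it. This is a minor fix compared to the missing Lemma \ref{lema:contagio_vectorial} step.
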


\begin{remark}
In classical elasticity theory \cite{Cia,Mar},
\emph{internal forces} arising
between neighboring parts of a solid body are assumed to
act ``locally''  across the common boundary between them. In peridynamics, material points can interact with one another within a certain length scale -usually called a {\em horizon}- through a bond.  Thanks to this approach, it is possible to reflect properties of the microstructure of the body, a capability beyond the classical elasticity theory. In our model, the elastic energy
\eqref{def:funcional_Dirichlet.intro.Vec}, takes into account the interaction between two different materials. We assume that the deformation field associated to one of them (occupying the portion described by $\Omega_\ell$) can be  properly  described by the classical \emph{linearized} elasticity while for the other material (described by $\Omega_{n\ell}$),  a nonlocal (peridynamic) model is better suited. Both parts interact through the     bond modelled by the kernel $J$.

There are basically two different approaches  for finding static or equilibrium states in mechanics. By minimizing an appropriate energy functional or by balancing 
all the involved forces acting on the system. In this case, the explicit form of these forces are visible in the  Euler-Lagrange equations associated to our minimization problem, given in \ref{teo.elast.88}.
\end{remark}

\subsubsection{{\bf Second model.}
Coupling local/nonlocal elasticity models via flux terms}

For 
$U:\Omega \to \RR^N,$
and 
$\Gamma \subseteq \partial \Omega_{\ell} \setminus \partial \Omega,$  we consider the local/nonlocal energy 
\begin{equation}\label{def:funcional_Dirichlet.intro.Vec_II}
\begin{array}{l}
\displaystyle 
E_{II}(u):=
\Xi (U) +
\frac{1}{2}\int_{\Omega_{n\ell}}\int_{\mathbb{R}^N \setminus \Omega_{\ell}} 
J(x-y)|(x-y) \cdot (U(y)-U(x))|^2\,{\rm d}y{\rm d}x \\[7pt]
\displaystyle 
\qquad \qquad + \frac{1}{2}\int_{\Omega_{n\ell}}\int_{\Gamma} 
	G(x,y)|(x-y) \cdot (U(y)-U(x))|^2\,{ {\rm d}}\sigma(y){\rm d}x	- \int_\Omega F (x)U (x) {\rm d}x.
	\end{array}
\end{equation}
Notice that, as in the first model, we are integrating in the whole $\mathbb{R}^N$, therefore to evaluate this energy we have to take a fixed \emph{Dirichlet} exterior datum. To simplify, as we did before, we just consider homogeneous Dirichlet conditions,
$
	U(x) = 0$, for $ x \not\in \Omega$.

For this energy, we look for minimizers in the space
$$
H_{II} = \Big\{U: U|_\Omega \in L^2 (\Omega:\mathbb{R}^N), U|_{\Omega_\ell} \in H^1 (\Omega_\ell:\mathbb{R}^N), 
U = 0 \mbox{ in } \mathbb{R}^N \setminus \Omega \Big\}.
$$

We prove the following result.

\begin{theorem} \label{teo.elast.88-999}
Assume $(J1)$, $(J2)$, $(1)$, $(2)$, $(G1)$ and $(P2)$.
Given $F \in L^2 (\Omega)$, there exists a unique minimizer $U$ of $E_{II}$ in $H_{II}$. The minimizer $U$ is a weak solution to 
the following local-nonlocal system,
\begin{equation}  \label{eq:main.Dirichlet.local.elast_II}
\begin{cases}
\displaystyle  F(x)=Div( \sigma (x)), \qquad x\in \Omega_\ell,
\\[7pt]
\displaystyle \sigma(x) \eta= \int_{\Omega_{n\ell}}
G(y,x)\left[(x-y)\otimes (x-y)\right]  (U(x)-U(y))^T{\rm d}y,\quad x\in \Gamma=\partial \Omega_{\ell} \cap \partial \Omega_{n\ell},&  \\[7pt]
U(x)= 0, \qquad x \in \partial \Omega \cap \partial \Omega_\ell,& 
\end{cases}
\end{equation}
in $\Omega_\ell$  and   
\begin{equation}
\label{eq:main.Dirichlet.nonlocal.elast.567}
\begin{split}
F(x)&= \int_{\mathbb{R}^N\setminus \Omega} J(x-y)\left[(x-y)\otimes (x-y)\right] (U(x)-U(y))\, {\rm d}y
\\
&\quad +2 \int_{\Omega_{n\ell}} J(x-y)\left[(x-y)\otimes (x-y)\right]  (U(x)-U(y))\, {\rm d}y \\
& \quad +\int_{\Gamma}  G(x,y)\left[(x-y)\otimes (x-y)\right]  (U(x)-U(y))\, {{\rm d}}\sigma(y),
\end{split}
\end{equation}
 in $\Omega_{n\ell}$. 
\end{theorem}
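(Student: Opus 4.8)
The plan is to obtain $U$ as the unique minimizer of $E_{II}$ over the Hilbert space $H_{II}$ by the direct method, following the scalar Theorem~\ref{teo.1.44}, and then to read \eqref{eq:main.Dirichlet.local.elast_II}--\eqref{eq:main.Dirichlet.nonlocal.elast.567} off the Euler--Lagrange identity. Write $E_{II}(U)=\tfrac12\,\mathcal B(U,U)-\int_\Omega F\cdot U$, where $\mathcal B$ is the symmetric bilinear form obtained by polarizing the three quadratic terms in \eqref{def:funcional_Dirichlet.intro.Vec_II}. Under the standing hypotheses (and boundedness of $\Omega$) $\mathcal B$ is continuous on $H_{II}$ and $\mathcal B(U,U)\ge 0$, so $E_{II}$ is convex and weakly lower semicontinuous; everything then reduces to the coercivity bound $\mathcal B(U,U)\ge c\big(\|U\|_{H^1(\Omega_\ell)}^2+\|U\|_{L^2(\Omega_{n\ell})}^2\big)$ for $U\in H_{II}$. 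Granting it, the direct method gives a unique minimizer and $\mathcal B(U,\phi)=\int_\Omega F\cdot\phi$ for all $\phi\in H_{II}$; applying the first Green formula to $\int_{\Omega_\ell}\sigma:\mathcal E(\phi)=\int_{\Omega_\ell}\sigma:\nabla\phi$ (with $\sigma=2\mu\mathcal E(U)+\lambda\,\mathrm{div}(U)\,Id$ symmetric) produces $Div(\sigma)$ in $\Omega_\ell$, the Dirichlet datum on $\partial\Omega\cap\partial\Omega_\ell$ encoded in $H_{II}$ (where $\phi=0$), and on $\Gamma$ the boundary term $\int_\Gamma(\sigma\eta)\cdot\phi$, which matched against the variation of the $\Gamma$-term gives the stated flux condition; Fubini and symmetrization in the two nonlocal terms then yield \eqref{eq:main.Dirichlet.nonlocal.elast.567}.

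\medskip

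\noindent\textbf{Coercivity, by contradiction and compactness.} Suppose there is $U_k\in H_{II}$ with $\|U_k\|_{H^1(\Omega_\ell)}^2+\|U_k\|_{L^2(\Omega_{n\ell})}^2=1$ and $\mathcal B(U_k,U_k)\to0$. Since $\mu,\lambda>0$, $\mathcal E(U_k)\to0$ and $\mathrm{div}(U_k)\to0$ in $L^2(\Omega_\ell)$; as $U_k|_{\Omega_\ell}$ is bounded in $L^2(\Omega_\ell)$, Korn's second inequality on the Lipschitz domain $\Omega_\ell$ bounds it in $H^1(\Omega_\ell:\mathbb R^N)$, and, passing to a subsequence, $U_k|_{\Omega_\ell}\to\rho_\ell$ strongly in $H^1(\Omega_\ell)$ (using Rellich and the strong convergence $\mathcal E(U_k)\to0$), with $\mathcal E(\rho_\ell)=0$, i.e.\ $\rho_\ell$ an infinitesimal rigid motion. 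On $\Omega_{n\ell}$, boundedness of $\tfrac12\int_{\Omega_{n\ell}}\int_{\Omega_{n\ell}}J(x-y)|(x-y)\cdot(U_k(y)-U_k(x))|^2$ together with the compactness coming from $(J1)$--$(J2)$ (exactly the $L^2(\Omega_{n\ell})$ compactness used in the scalar case) gives, along a further subsequence, $U_k|_{\Omega_{n\ell}}\to V$ strongly in $L^2(\Omega_{n\ell}:\mathbb R^N)$. Set $U=\rho_\ell$ on $\Omega_\ell$, $U=V$ on $\Omega_{n\ell}$, $U=0$ on $\mathbb R^N\setminus\Omega$; Fatou in each nonnegative term gives $\mathcal B(U,U)=0$, and by strong convergence $\|U\|_{H^1(\Omega_\ell)}^2+\|U\|_{L^2(\Omega_{n\ell})}^2=1$, so $U\not\equiv0$. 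It therefore suffices to show that $\mathcal B(U,U)=0$ with $U\in H_{II}$ forces $U\equiv0$.

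\medskip

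\noindent\textbf{The zero-energy states.} From $\tfrac12\int_{\Omega_{n\ell}}\int_{\Omega_{n\ell}}J(x-y)|(x-y)\cdot(U(y)-U(x))|^2=0$ and $(J1)$ we get $(x-y)\cdot(U(y)-U(x))=0$ for a.e.\ $x,y\in\Omega_{n\ell}$ with $\|x-y\|\le2\delta$. A nonlocal Korn-type step -- localize, take difference quotients in $y-x$ to see that the symmetric gradient of $U$ vanishes, and use the $\delta$-connectedness of $\Omega_{n\ell}$ from $(2)$ -- upgrades this to: $U|_{\Omega_{n\ell}}$ equals a single infinitesimal rigid motion $\rho_{n\ell}$. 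Vanishing of $\tfrac12\int_{\Omega_{n\ell}}\int_\Gamma G(x,y)|(x-y)\cdot(U(y)-U(x))|^2$ together with $(G1)$ and $(P2)$ then forces $(x-y)\cdot(\rho_\ell(y)-\rho_{n\ell}(x))=0$ for $x\in\Omega_{n\ell}$, $y\in\Gamma$ with $\|x-y\|\le 2\delta$, so $\rho_\ell$ and $\rho_{n\ell}$ are compatible across the hypersurface $\Gamma$ and glue to one affine rigid motion $\rho$ with $U=\rho$ on $\Omega$ and $U=0$ outside $\Omega$. Finally the homogeneous data reach every part of $\Omega$ through $\delta$-connectivity: if $\overline\Omega_{n\ell}$ meets $\partial\Omega$, then $\mathrm{dist}(\Omega_{n\ell},\mathbb R^N\setminus\Omega)=0<2\delta$ and the vanishing of $\int_{\Omega_{n\ell}}\int_{\mathbb R^N\setminus\Omega}J(x-y)|(x-y)\cdot U(x)|^2$ makes $(x-y)\cdot\rho(x)=0$ for $x\in\Omega_{n\ell}$ near $\partial\Omega$ and $y\notin\Omega$, the admissible vectors $x-y$ filling a nonempty open set, so $\rho$ vanishes on a nonempty open subset of $\Omega_{n\ell}$ and hence, being affine, $\rho\equiv0$; otherwise $\overline\Omega_\ell$ meets $\partial\Omega$ and the homogeneous Dirichlet condition $U=0$ on $\partial\Omega\cap\partial\Omega_\ell$ kills the affine $\rho$. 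In all cases $U\equiv0$, which is the desired contradiction; coercivity, and with it the theorem, follows.

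\medskip

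\noindent\textbf{Main obstacle.} The only genuinely new difficulty relative to the scalar Theorem~\ref{teo.1.44} is the bond-based (peridynamic) structure: $\mathcal B$ controls only the component of $U(y)-U(x)$ along the bond $x-y$, so the scalar implication ``kernel positive $\Rightarrow U(y)=U(x)$'' is lost. Its replacement is the nonlocal Korn-type identification of zero-energy states of the nonlocal form with affine rigid motions (alternatively one may invoke a nonlocal Korn inequality on $\delta$-connected Lipschitz sets), after which compatibility across $\Gamma$ and the propagation of the homogeneous data do the pinning. Everything else -- Korn's second inequality on $\Omega_\ell$, the $L^2(\Omega_{n\ell})$ compactness from $(J2)$, weak lower semicontinuity, and the Green-formula computation of the Euler--Lagrange system -- is as in the scalar case.
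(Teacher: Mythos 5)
The overall scaffolding of your argument matches the paper's: direct method, coercivity by contradiction, Korn's inequality and nonlocal rigidity on the two subdomains, compatibility of rigid motions across $\Gamma$, and reading off the Euler--Lagrange system via the Green formula. But there is a genuine gap in the coercivity step, which is the heart of the proof.

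You write that ``boundedness of $\tfrac12\int_{\Omega_{n\ell}}\int_{\Omega_{n\ell}}J(x-y)|(x-y)\cdot(U_k(y)-U_k(x))|^2$ together with the compactness coming from $(J1)$--$(J2)$ (exactly the $L^2(\Omega_{n\ell})$ compactness used in the scalar case) gives, along a further subsequence, $U_k|_{\Omega_{n\ell}}\to V$ strongly in $L^2(\Omega_{n\ell})$.'' This is not correct as stated, and it does not reflect what happens in the scalar case. For a bounded (or merely $L^2$) kernel $J$, the nonlocal bilinear form controls at most the $L^2(\Omega_{n\ell})$ norm itself, so boundedness of the nonlocal energy gives only \emph{weak} $L^2(\Omega_{n\ell})$ compactness, never strong. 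There is no ``energy space'' compactly embedded in $L^2(\Omega_{n\ell})$ here; assumption $(J2)$ says the convolution is a compact operator \emph{on} $L^2(\Omega_{n\ell})$, which is something quite different. Consequently, your subsequent steps --- defining $U$ as the $L^2(\Omega_{n\ell})$ limit $V$, deducing $\mathcal{B}(U,U)=0$, and, crucially, deducing $\|U\|_{H^1(\Omega_\ell)}^2+\|U\|_{L^2(\Omega_{n\ell})}^2=1$ so that $U\not\equiv 0$ --- all rest on a compactness fact you have not established.

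The paper's proof of the coercivity (Lemma~\ref{lema.1.99_II}) is organised in a different order precisely to avoid this: on $\Omega_{n\ell}$ one first extracts only a \emph{weakly} convergent subsequence, uses weak lower semicontinuity to pass to the limit in the nonnegative quadratic terms, characterizes the weak limit on $\Omega_{n\ell}$ as a single infinitesimal rigid motion via Lemma~\ref{lema:nucleodelepsilon}, pins it to the limit $K_1\in RM$ on $\Omega_\ell$ using the $\Gamma$-coupling and $(G1)$, $(P2)$, and pins both to $0$ by the Dirichlet data. Only \emph{then} does one invoke the propagation Lemma~\ref{lema:contagio_vectorial_2}, whose hypotheses explicitly require $U_n\to 0$ strongly in $H^1(\Omega_\ell)$ and weakly in $L^2(\Omega_{n\ell})$; this lemma is not an abstract compactness statement but a ``contagion'' argument that starts from the trace on $\Gamma$ (where strong $L^2$ convergence to $0$ is inherited from $H^1(\Omega_\ell)$), shows via $(G1)$, $(P2)$ and the geometry of the cone of bond directions that $U_n\to 0$ strongly on the first $\delta$-layer $A^0_\delta\subset\Omega_{n\ell}$, and then propagates through $\Omega_{n\ell}$ layer by layer using $(J1)$ and the $\delta$-connectedness of $\Omega_{n\ell}$. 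You acknowledge in your ``Main obstacle'' remark that the bond-based structure only controls $(x-y)\cdot(U(x)-U(y))$; but you do not confront the fact that this same structure is what makes the propagation lemma nontrivial (one has to fill out a whole cone of directions $x-y$ to recover $\|U_n(x)\|$), and you skip it entirely.

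To repair the proposal you should either (i) prove the weak limit on $\Omega_{n\ell}$ is a rigid motion directly (weak l.s.c.\ of the convex nonlocal quadratic forms plus Lemma~\ref{lema:nucleodelepsilon}), identify it as $0$, and then invoke Lemma~\ref{lema:contagio_vectorial_2} to pass from weak to strong convergence, as the paper does; or (ii) supply an explicit argument for strong $L^2(\Omega_{n\ell})$ convergence along the minimizing sequence, which in the vectorial case would in particular require showing that the matrices $A(x)=\int_{\Omega_{n\ell}}J(x-y)(x-y)\otimes(x-y)\,{\rm d}y$ are uniformly positive definite on $\Omega_{n\ell}$ --- a nontrivial claim that is not among the standing hypotheses and is not addressed in your proposal. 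The remainder of your argument (rigidity in $\Omega_{n\ell}$, gluing across $\Gamma$, pinning by the homogeneous exterior data, and the Euler--Lagrange computation) is in line with the paper.
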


\begin{remark}
The energy \eqref{def:funcional_Dirichlet.intro.Vec_II} resembles  \eqref{def:funcional_Dirichlet.intro.Vec}, however the  part of $\Omega_{\ell}$ seen from $\Omega_{n\ell}$ is placed at the boundary and localized in $\Gamma$. Notice that now the interaction
between the local and nonlocal parts of the domain takes place between points in $\Omega_{n\ell}$ and points on $\Gamma$. 
In this case, the bond is given by a different operator encoded by the function 
$G$. 
\end{remark}

\section{The scalar case} \label{sect-scalar}

 Our goal in this section is to look for different energies involving a scalar function 
 that combine local terms acting in $\Omega_\ell$ and nonlocal ones in $\Omega_{n\ell}$ plus a coupling term.

\subsection{{First model} Coupling local/nonlocal problems via source terms}
We study the existence and uniqueness of minimizers to 
the local/nonlocal energy,
\begin{equation}\label{def:funcional_dirichlet.intro}
E_{i} (u):=\int_{\Omega_\ell} \frac{|\nabla u (x)|^2}{2} {\rm d}x + \frac{1}{2}\int_{\Omega_{n\ell}}\int_{\mathbb{R}^N} J(x-y)(u(y)-u(x))^2\, {\rm d}y{\rm d}x
- \int_\Omega f(x) u(x) {\rm d}x,
\end{equation}
in the set $H_{i}$ given by
$$
H_{i} = \Big\{ u \in L^2 (\Omega), u|_{\Omega_\ell} \in H^1 (\Omega_\ell), 
 u = 0 \mbox{ in } \mathbb{R}^N \setminus \Omega \Big\}.
$$

To begin with our analysis of this problem we introduce a couple of useful lemmas. 
 
\begin{lemma}
\label{lema:J0implica_cte}
 Let $D$ be an open  $\delta$ connected set, and $u:D\to \RR$. If
 $$
 \int_{D}\int_{D}J(x-y)(u(x)-u(y))^2 {\rm d}y {\rm d}x=0,
 $$
 then there exists a constant $k \in \mathbb{R}$ such that
 $$u(x)=k, \qquad \mbox{a.e. } x\in D. $$ 
\end{lemma}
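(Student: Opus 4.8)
The plan is to combine hypothesis $(J1)$ with the vanishing of the double integral to get a.e.\ equality of $u$ at nearby points, and then to propagate this across $D$ by the chain characterization of $\delta$-connectedness. The key point to exploit is the slack between the horizon $2\delta$ of $J$ and the connectivity scale $\delta$.

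First I would observe that, since $J\ge 0$ and $(u(x)-u(y))^2\ge 0$, the identity $\int_D\int_D J(x-y)(u(x)-u(y))^2\,\mathrm{d}y\,\mathrm{d}x=0$ forces $J(x-y)(u(x)-u(y))^2=0$ for a.e.\ $(x,y)\in D\times D$; by $(J1)$, $J(x-y)>C>0$ whenever $\|x-y\|\le 2\delta$, so $u(x)=u(y)$ for a.e.\ $(x,y)\in D\times D$ with $\|x-y\|\le 2\delta$. Then, for any ball $B=B(z,r)\subset D$ with $r\le\delta$ one has $B\times B\subset\{\|x-y\|\le 2\delta\}$, and a Fubini argument produces a point $x_B\in B$ with $u=u(x_B)$ a.e.\ on $B$; thus $u$ agrees a.e.\ on each such small ball with a constant. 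Since two such balls meeting in positive measure must carry the same constant, this defines a locally constant function $c$ on $D$ with $u=c$ a.e.\ on small balls; as $D$ is Lindel\"of, it then remains only to show $c$ is globally constant.

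Next I would fix $x,y\in D$ and invoke the chain form of $\delta$-connectedness: there are $x_0=x,x_1,\dots,x_n=y$ in $D$ with $\mathrm{dist}(x_i,x_{i+1})<\delta$. Choosing radii $\rho_i<\delta/2$ with $B(x_i,\rho_i)\subset D$, any $a\in B(x_i,\rho_i)$ and $b\in B(x_{i+1},\rho_{i+1})$ satisfy $\|a-b\|<\rho_i+\delta+\rho_{i+1}<2\delta$, so $u(a)=u(b)$ for a.e.\ such pair and hence the constants on consecutive balls coincide, giving $c(x)=c(x_0)=\dots=c(x_n)=c(y)$. As $x,y$ are arbitrary, $c\equiv k$ for some $k\in\mathbb{R}$, and therefore $u=k$ a.e.\ on $D$.

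I expect the only delicate point to be this propagation step — specifically, making correct use of the $2\delta$-versus-$\delta$ gap so that small neighbourhoods of consecutive chain points still lie within the range where $J$ is bounded below; the remaining ingredients are routine measure theory (Fubini together with a countable subcover of $D$).
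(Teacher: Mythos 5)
Your proof is correct, and the underlying idea matches the paper's: the vanishing integral together with hypothesis $(J1)$ forces $u$ to be a.e.\ constant on any ball whose diameter stays below $2\delta$, and then $\delta$-connectedness is used to propagate a single constant across $D$. The one difference is in how you implement the propagation: you invoke the chain characterization of $\delta$-connectedness (which the paper states, without proof, just after \ref{def:deltaconnected}) and transport the constant along consecutive small balls $B(x_i,\rho_i)$ with $\rho_i<\delta/2$, using the explicit bound $\|a-b\|<\rho_i+\delta+\rho_{i+1}<2\delta$; the paper instead works directly from the set-theoretic definition via a maximality argument (take a maximal open $M\subset D$ on which $u=k_0$ a.e., and if $M\subsetneq D$ produce a $\delta$-ball meeting both $M$ and $D\setminus M$ in positive measure, contradicting maximality). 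Both implementations exploit the same $2\delta$-versus-$\delta$ slack and rest on the same Fubini step to extract the constant on each ball; your chain version is slightly more constructive and makes the role of the horizon more transparent, while the paper's maximality version avoids appealing to the (equivalent but unproved) chain reformulation. One small presentational point: the Lindel\"of/countable-subcover observation is indeed needed at the end to pass from ``$u=k$ a.e.\ on each small ball of a cover'' to ``$u=k$ a.e.\ on $D$,'' and it would read more cleanly if you stated it there rather than before establishing that $c$ is globally constant.
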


\begin{proof}
 Pick $x_0\in D$ and a ball $B_0=B_{\delta}(x_0)$,  we have 
 $$
 C\int_{D\cap B_0 }\int_{D\cap B_0}(u(x)-u(y))^2 {\rm d}y {\rm d}x\le 
 \int_{D\cap B_0 }\int_{D\cap B_0}J(x-y)(u(x)-u(y))^2 {\rm d}y {\rm d}x =0,
 $$
(since $J(x-y)>C$ for $x,y\in B_0$) and  hence  $u(x)=k_0$ a.e. $x\in D\cap B_0$. In order to see that this property holds a.e. $x\in D$, let us introduce the set  $\mathcal{M}=\{A\subset D, A\,\, \mbox{open}: u(x)=k_0 \,\mbox{a.e.} x\in A \}$ with the partial order given by inclusion. Since $\mathcal{M}\neq\emptyset$ there exists a maximal open set $M\in \mathcal{M}$. If $M\subsetneq \Omega$ then we consider the set $\emptyset\neq D\setminus M$. 
 
 If $D\setminus M$ is open we must have -using that $D$ is $\delta$ connected- that $dist(M,D\setminus M)<\delta$. If
 $D\setminus M$ is not open, then $dist(M, D\setminus M)=0$  (since $D$ is open).  
Either case, there exists a ball  $B_1$ of radius $\delta$ such that $B_1\cap D\setminus M\neq \emptyset$ and  $B_1\cap M$ has positive measure (since both, $B_1$ and $M$, are open sets).   Arguing as before we see that $u(x)=k_0$ a.e. $x\in B_1\cap D$, a contradiction (since $M$ is maximal with that property and we would have $M\subsetneq M\cup (B_1\cap D)=M$).
 We see that $M=D$ and the proof is complete. 
\end{proof}

Now we prove a lemma that will be used in what follows.

\begin{lemma}
\label{lema:contagio_escalar}
Let $u_n:\Omega\to \RR$ be a sequence such that $u_n\to 0$ strongly in $L^2(\Omega_{\ell})$ and weakly in $L^2(\Omega_{n\ell})$, if in addition  
\begin{equation}
\label{eq:nolcalomega}
\lim_{n\to\infty}\int_{\Omega_{n\ell}}\int_{\Omega} J(x-y)(u_n(x)-u_n(y))^2 {\rm d}y {\rm d}x =0, 
\end{equation}
 then
 $$
 \lim_{n\to\infty}\int_{\Omega_{n\ell}}|u_n(x)|^2 {\rm d}x =0,
 $$
 that is, we have strong convergence of $u_n$ to zero in $L^2(\Omega_{n\ell})$ and hence in $L^2(\Omega)$.
\end{lemma}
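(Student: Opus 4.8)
The idea is to propagate the strong convergence from $\Omega_\ell$ into $\Omega_{n\ell}$, step by step, using condition $(P1)$ (so that the two regions are closer than $\delta$) together with the $\delta$-connectedness of $\Omega_{n\ell}$ and the lower bound $(J1)$ on $J$ near the origin. Concretely, I would first observe that from \eqref{eq:nolcalomega}, since $\Omega = \Omega_\ell \cup \Omega_{n\ell}$ up to null sets, we get both
$$
\lim_{n\to\infty}\int_{\Omega_{n\ell}}\int_{\Omega_\ell} J(x-y)(u_n(x)-u_n(y))^2 {\rm d}y {\rm d}x =0
\quad\text{and}\quad
\lim_{n\to\infty}\int_{\Omega_{n\ell}}\int_{\Omega_{n\ell}} J(x-y)(u_n(x)-u_n(y))^2 {\rm d}y {\rm d}x =0.
$$
By $(P1)$ there is a point $x_*\in\Omega_{n\ell}$ within distance $\delta$ of $\Omega_\ell$, hence a ball $B_*=B_\delta(\bar x)$ meeting both $\Omega_\ell$ and $\Omega_{n\ell}$ in sets of positive measure; for $x\in B_*\cap\Omega_{n\ell}$ and $y\in B_*\cap\Omega_\ell$ one has $\|x-y\|\le 2\delta$, so $J(x-y)>C$. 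Thus
$$
C\int_{B_*\cap\Omega_{n\ell}}\int_{B_*\cap\Omega_\ell}(u_n(x)-u_n(y))^2{\rm d}y{\rm d}x \;\longrightarrow\; 0.
$$
Since $u_n\to 0$ strongly in $L^2(\Omega_\ell)$, the term involving $u_n(y)$ on $B_*\cap\Omega_\ell$ vanishes, and expanding the square one concludes $\int_{B_*\cap\Omega_{n\ell}}u_n(x)^2{\rm d}x\to 0$ — i.e. strong convergence to $0$ holds on the "seed" set $U_0:=B_*\cap\Omega_{n\ell}$, which has positive measure and is open.

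Next I would run a covering/chaining argument inside $\Omega_{n\ell}$ analogous to the maximality argument in Lemma \ref{lema:J0implica_cte}. Let $\mathcal{N}$ be the collection of open subsets $A\subseteq\Omega_{n\ell}$ on which $u_n\to 0$ strongly in $L^2(A)$; it is nonempty (it contains $U_0$) and closed under countable unions of sets with uniformly controlled limits, so by a Zorn-type argument — or more simply by taking $M$ to be the union of all such $A$ — there is a maximal element $M\in\mathcal{N}$. Suppose $M\subsetneq\Omega_{n\ell}$. As in Lemma \ref{lema:J0implica_cte}, using that $\Omega_{n\ell}$ is $\delta$-connected and $M$ is open, $\operatorname{dist}(M,\Omega_{n\ell}\setminus M)<\delta$, so we can find a ball $B_1$ of radius $\delta$ with $B_1\cap M$ of positive measure and $B_1\cap(\Omega_{n\ell}\setminus M)$ of positive measure. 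For $x\in B_1\cap\Omega_{n\ell}$ and $y\in B_1\cap M$ we have $\|x-y\|\le 2\delta$, hence $J(x-y)>C$, so
$$
C\int_{B_1\cap\Omega_{n\ell}}\int_{B_1\cap M}(u_n(x)-u_n(y))^2{\rm d}y{\rm d}x \;\longrightarrow\; 0
$$
by the second limit above. Since $u_n\to 0$ strongly on $B_1\cap M\subseteq M$, the $u_n(y)$ contribution vanishes, and as before this forces $\int_{B_1\cap\Omega_{n\ell}}u_n(x)^2{\rm d}x\to 0$, so $M\cup(B_1\cap\Omega_{n\ell})\in\mathcal{N}$, contradicting maximality. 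Hence $M=\Omega_{n\ell}$ and the conclusion follows; strong convergence in $L^2(\Omega)$ is then immediate.

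The one genuinely delicate point — and where the weak-convergence hypothesis is really used — is the passage from "$\int_{B}\int_{B'}(u_n(x)-u_n(y))^2\to 0$ with $u_n\to 0$ strongly on $B'$" to "$\int_B u_n(x)^2\to 0$". Expanding, $\int_B u_n^2 \,|B'| = \int_B\int_{B'}(u_n(x)-u_n(y))^2 - \int_B\int_{B'}u_n(y)^2 + 2\int_B\int_{B'}u_n(x)u_n(y)$; the first term $\to 0$, the second $\to 0$ by strong convergence on $B'$, and the cross term equals $2\big(\int_B u_n(x){\rm d}x\big)\big(\int_{B'}u_n(y){\rm d}y\big)\to 0$ because $\int_{B'}u_n\to 0$ (strong convergence) while $\int_B u_n$ stays bounded (weak convergence on $\Omega_{n\ell}$ gives an $L^2$ bound, hence an $L^1(B)$ bound). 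So $\int_B u_n^2\to 0$. I would make sure the "uniform control" needed to pass maximality through unions is phrased carefully — e.g. work with the monotone union directly rather than invoking Zorn — but no new ideas are needed there; the argument is a verbatim adaptation of the scheme in Lemma \ref{lema:J0implica_cte}, with "$u=$const a.e." replaced throughout by "$u_n\to 0$ strongly in $L^2$".
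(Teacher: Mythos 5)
Your overall plan — produce a ``seed'' set near $\Omega_\ell$ on which strong $L^2$ convergence holds, then propagate that through $\Omega_{n\ell}$ using $\delta$-connectedness and $(J1)$, with the ``expand the square'' computation doing the analytic work at each step — matches the paper's proof, and the seed step and the square-expansion computation (including the observation that weak convergence controls $\int u_n$ and strong convergence on the other factor kills the cross term) are correct. However, your mechanism for the propagation does not work as stated. You form the collection $\mathcal N$ of open $A\subseteq\Omega_{n\ell}$ on which $u_n\to 0$ strongly in $L^2(A)$, and then want a maximal $M\in\mathcal N$ (by Zorn, or by taking $M$ to be the union of all members). This is precisely where the analogy with Lemma~\ref{lema:J0implica_cte} breaks: the property ``$u=\text{const a.e.}$'' passes to arbitrary unions, whereas ``$u_n\to 0$ strongly in $L^2$'' does not pass even to increasing countable unions. (Take $u_n$ supported in $A_{n+1}\setminus A_n$ with $\|u_n\|_{L^2}=1$; then $u_n\to 0$ strongly in each $L^2(A_k)$ yet not in $L^2(\bigcup A_k)$.) So the union of $\mathcal N$ need not lie in $\mathcal N$, a maximal element need not exist, and the step ``Since $u_n\to 0$ strongly on $B_1\cap M\subseteq M$\dots'' is unjustified — it presupposes exactly what is at issue. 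Your closing remark that ``no new ideas are needed; the argument is a verbatim adaptation'' of Lemma~\ref{lema:J0implica_cte} is therefore not correct.

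The paper avoids this by making the propagation an explicitly \emph{finite} iteration: it sets $A^0_\delta=\{x\in\Omega_{n\ell}:\operatorname{dist}(x,\Omega_\ell)<\delta\}$ (using a uniform lower bound on $|B_{2\delta}(x)\cap\Omega_\ell|$ by compactness of $\overline{A^0_\delta}$), then $A^{j}_\delta=\{x\in\Omega_{n\ell}\setminus\overline{\bigcup_{i<j}A^i_\delta}:\operatorname{dist}(x,\bigcup_{i<j}A^i_\delta)<\delta\}$, and invokes $\delta$-connectedness to see the process does not stall and boundedness of $\Omega_{n\ell}$ to conclude it terminates after finitely many steps $K$, so $\Omega_{n\ell}=\bigcup_{i<K}A^i_\delta$. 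Strong convergence on a finite union of sets on which it holds individually is unproblematic, so the conclusion follows. To repair your argument along your own lines, you would either (i) replace the Zorn step by this finite chain, or (ii) redefine $M$ as the set of points having an open neighborhood on which $u_n\to 0$ in $L^2$ and, in the contradiction step, work with a compact subset $K\subset B_1\cap M$ of positive measure (which a finite subcover does make a set of strong convergence) rather than with $B_1\cap M$ itself. Either way a genuinely extra idea — finiteness or compact exhaustion — is required beyond what Lemma~\ref{lema:J0implica_cte} provides. A smaller remark: the paper routes the very first step through hypothesis $(J2)$, whereas your Cauchy--Schwarz bookkeeping handles the cross term without compactness of $T_J$; your variant is fine and arguably lighter.
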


\begin{proof}
From \eqref{eq:nolcalomega},   the convergence of $\{u_n\}$ and property $(J2)$,  we easily find that 
\begin{equation}
 \label{eq:solonl}
\lim_{n\to\infty}\int_{\Omega_{n\ell}}\int_{\Omega_{\ell}} J(x-y)u_n(x)^2 {\rm d}x {\rm d}x=0.
\end{equation}
Let us define  
 $
 A^0_\delta=\{x\in \Omega_{n\ell}: dist(x,\Omega_{\ell})<\delta\}.$
Notice that thanks to property $(P)$ and to the fact that  $\Omega_{n\ell}$ is open we see that  $A^0_\delta$ is open and non empty.  In particular it has positive n-dimensional measure.   For any $x\in \overline{A^0_{\delta}}$
we consider the continuous and \emph{strictly} positive function  $g(x)=|B_{2\delta}(x)\cap \Omega_{\ell}|$. Since 
$\overline{A^0_{\delta}}$ is a compact set, there exists a constant $m>0$ such that $g(x)\ge m$ for any  $x\in \overline{A^0_{\delta}}$. 
 As a consequence 
  $$
  \begin{array}{l}
  \displaystyle 
 \int_{\Omega_{n\ell}}\int_{\Omega_{\ell}} J(x-y)u_n(x)^2 {\rm d}y {\rm d}x \ge 
 \int_{A^0_\delta}\int_{B_{2\delta}(x)\cap \Omega_{\ell}} J(x-y)|u_n(x)|^2 {\rm d}y {\rm d}x  \\[7pt]
 \qquad \displaystyle \ge
 mC\int_{A^0_\delta}|u_n(x)|^2 {\rm d}x.
 \end{array}
 $$ 
 and therefore, thanks to  \eqref{eq:solonl}, $u_n\to 0$ in $L^2(A^0_\delta)$.
 In order to iterate 
 this argument we notice that at this point we know that 
 $u_n\to 0$ strongly in $A^0_\delta$ and weakly in $\Omega_{n\ell} \setminus \overline{A^0_\delta}$, hence again from \eqref{eq:nolcalomega} we get
\begin{equation}
 \label{eq:soloadelta}
 \lim_{n\to\infty}\int_{\Omega_{n\ell} \setminus \overline{A^0_\delta}}\int_{ A^0_\delta} J(x-y)|u_n(x)|^2 {\rm d}y {\rm d}x
 =0.
\end{equation}
Since $\Omega_{n\ell}$ is $\delta$ connected, $dist(\Omega_{n\ell} \setminus \overline{A^0_\delta}, A^0_\delta)<\delta$. Considering now
 $
 A^1_\delta=\{x\in \Omega_{n\ell}\setminus \overline{A^0_\delta}: dist(x,A^0_\delta)<\delta\},
 $
 and proceeding as before, we obtain, from \eqref{eq:soloadelta}, that 
 $u_n\to 0$ strongly in $A^1_{\delta}$. This argument can be repeated and giving strong converge in $L^2(A^j_\delta)$ for 
 $$
 A^j_\delta=\Big\{x\in \Omega_{n\ell}\setminus\overline{ \cup_{0\le i< j} A^i_\delta}: dist(x, \cup_{0\le i< j} A^i_\delta)<\delta \Big\}.
 $$
 Since $\Omega_{n\ell}$ is bounded, we have, for a finite number $K\in \N$, $\Omega_{n\ell}=\cup_{0\le i< K} A^i_\delta$ 
 and therefore  the proof is complete.  \end{proof}

With this lemma at hand we are ready to prove 
the following result that is the key step in order to obtain coerciveness of $E_{i}$.
 
\begin{lemma} \label{lema.1}
There exists a constant $C$ such that
$$
\int_{\Omega_\ell} \frac{|\nabla u (x)|^2 }{2} {\rm d}x + \frac{1}{2}\int_{\Omega_{n\ell}}\int_{\mathbb{R}^N} J(x-y)(u(y)-u(x))^2\, {\rm d}y{\rm d}x
\geq C \int_\Omega |u(x)|^2 {\rm d}x,
$$
for every $u$ in
$H_{i}$.
\end{lemma}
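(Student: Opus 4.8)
The plan is to argue by contradiction via a standard compactness scheme. If the inequality were false, there would exist $u_n\in H_{i}$ with $\int_\Omega|u_n|^2\,{\rm d}x=1$ and
$$
\int_{\Omega_\ell}\tfrac{1}{2}|\nabla u_n(x)|^2\,{\rm d}x+\tfrac{1}{2}\int_{\Omega_{n\ell}}\int_{\mathbb{R}^N}J(x-y)(u_n(y)-u_n(x))^2\,{\rm d}y\,{\rm d}x\ \longrightarrow\ 0 .
$$
Since the two summands are nonnegative, each of them tends to $0$; in particular $\nabla u_n\to 0$ in $L^2(\Omega_\ell)$, and, discarding the part of the double integral with $y\notin\Omega$, also $\int_{\Omega_{n\ell}}\int_{\Omega}J(x-y)(u_n(x)-u_n(y))^2\,{\rm d}y\,{\rm d}x\to 0$, which is exactly hypothesis \eqref{eq:nolcalomega} of Lemma~\ref{lema:contagio_escalar}.

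First I would extract a subsequential limit on the local side: since $\|u_n\|_{L^2(\Omega_\ell)}\le 1$ and $\nabla u_n\to 0$ in $L^2(\Omega_\ell)$, the restrictions $u_n|_{\Omega_\ell}$ are bounded in $H^1(\Omega_\ell)$, so by Rellich--Kondrachov (here one uses assumption $(1)$, that $\Omega_\ell$ is bounded and Lipschitz) a subsequence converges strongly in $L^2(\Omega_\ell)$ and weakly in $H^1(\Omega_\ell)$ to some $u_\infty$; since $\nabla u_n\to 0$ strongly, $\nabla u_\infty\equiv 0$, and as $\Omega_\ell$ is connected, $u_\infty\equiv c$ for a constant $c$. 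A further subsequence gives $u_n\rightharpoonup v$ weakly in $L^2(\Omega_{n\ell})$. Next I would propagate the value $c$ from $\Omega_\ell$ into $\Omega_{n\ell}$ by rerunning, almost verbatim, the layer-by-layer chaining in the proof of Lemma~\ref{lema:contagio_escalar} --- with the seed ``$u_n\to 0$ on $\Omega_\ell$'' replaced by ``$u_n\to c$ on $\Omega_\ell$'', using $(P1)$ to start the first layer $A^0_\delta=\{x\in\Omega_{n\ell}:dist(x,\Omega_\ell)<\delta\}$, $(J1)$ for the pointwise lower bound on $J$, and the $\delta$-connectedness of $\Omega_{n\ell}$ (assumption $(2)$) for the iteration. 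This yields $u_n\to c$ strongly in $L^2(\Omega_{n\ell})$, hence $u_n\to c\,\chi_\Omega$ strongly in $L^2(\Omega)$. (Equivalently: the nonlocal quadratic form is weakly lower semicontinuous, so the weak limit has zero nonlocal energy; by Lemma~\ref{lema:J0implica_cte} it is constant on the $\delta$-connected $\Omega_{n\ell}$, equal to $c$ by $(P1)$ and $(J1)$; then Lemma~\ref{lema:contagio_escalar} itself upgrades weak to strong convergence.)

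The hard part will be to rule out $c\neq 0$; this is the only step that genuinely uses the homogeneous exterior/boundary datum built into $H_{i}$. Retaining in the (vanishing) nonlocal term the interaction of $\Omega_{n\ell}$ with $\mathbb{R}^N\setminus\Omega$, where $u_n\equiv 0$, gives $\int_{\Omega_{n\ell}}\int_{\mathbb{R}^N\setminus\Omega}J(x-y)\,u_n(x)^2\,{\rm d}y\,{\rm d}x\to 0$, and the strong convergence $u_n\to c$ in $L^2(\Omega_{n\ell})$ then forces $c^2\int_{\Omega_{n\ell}}\int_{\mathbb{R}^N\setminus\Omega}J(x-y)\,{\rm d}y\,{\rm d}x=0$. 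By $(J1)$ this double integral is strictly positive whenever $\Omega_{n\ell}$ meets $\partial\Omega$ on a set of positive surface measure, which immediately gives $c=0$; in the complementary situation $\partial\Omega$ meets $\overline{\Omega_\ell}$ on a set of positive measure, and then one uses that the homogeneous Dirichlet condition makes the trace of $u_n$ vanish on $\partial\Omega\cap\partial\Omega_\ell$, a property preserved under the weak $H^1(\Omega_\ell)$ convergence, so that the constant $c$ must vanish there as well. Either way $c=0$, hence $u_n\to 0$ strongly in $L^2(\Omega)$, contradicting $\int_\Omega|u_n|^2\,{\rm d}x=1$. Everything apart from this last dichotomy is a routine combination of Rellich's theorem with the already-established Lemma~\ref{lema:contagio_escalar}, so I expect the book-keeping in the ``$c=0$'' step to be the genuine obstacle.
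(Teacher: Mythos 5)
Your proposal is correct and takes essentially the same route as the paper: a compactness argument by contradiction, Rellich on $\Omega_\ell$ to extract a constant, propagation of that constant into $\Omega_{n\ell}$ via the mechanism of Lemmas~\ref{lema:J0implica_cte} and~\ref{lema:contagio_escalar}, and the homogeneous exterior datum to kill the constant via the same two-case dichotomy ($\partial\Omega\cap\partial\Omega_\ell$ nonempty vs.\ $\Omega_{n\ell}$ touching $\partial\Omega$). Your main variant propagates $c$ into $\Omega_{n\ell}$ \emph{before} proving $c=0$, which requires re-running the chaining proof of Lemma~\ref{lema:contagio_escalar} with $c$ in place of $0$; the paper instead first identifies $k_1=k_2=0$ via weak limits and Lemma~\ref{lema:J0implica_cte} and only then invokes Lemma~\ref{lema:contagio_escalar} as a black box --- your parenthetical ``equivalently'' route is exactly the paper's ordering.
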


\begin{proof} We argue by contradiction and we assume that there is a sequence $u_n\in H$ such that
$$
 \int_\Omega |u_n(x)|^2 {\rm d}x =1
$$
and 
$$
\int_{\Omega_\ell} \frac{|\nabla u_n (x)|^2}{2} {\rm d}x + \frac{1}{2}\int_{\Omega_{n\ell}}\int_{\mathbb{R}^N} J(x-y)(u_n(y)-u_n(x))^2\, {\rm d}y{\rm d}x
\to 0.
$$

Then, we have that
$$
\int_{\Omega_\ell} \frac{|\nabla u_n (x)|^2}{2} {\rm d}x \to 0 
$$
and 
$$ 
\frac{1}{2}\int_{\Omega_{n\ell}}\int_{\mathbb{R}^N} J(x-y)(u_n(y)-u_n(x))^2\, {\rm d}y{\rm d}x
\to 0.
$$

Since $u_n$ is bounded in $L^2(\Omega_\ell)$ (the integral of $|u_n|^2$ in the whole $\Omega$ is equal to 1) and 
$$
\int_{\Omega_\ell} \frac{|\nabla u_n (x)|^2}{2} {\rm d}x \to 0 
$$
we get that there exists a constant $k_1$ such that, along a subsequence, 
$$
u_n \to k_1
$$
strongly in $H^1(\Omega_\ell)$. 

Now we argue in the nonlocal part $\Omega_{n\ell}$. Since $u_n$ in bounded in $L^2 (\Omega_{n\ell})$ 
we have that (extracting another subsequence if necessary)
$
u_n \rightharpoonup u
$
weakly in $L^2(\Omega_{n\ell})$. Moreover, from 
$$ 
\frac{1}{2}\int_{\Omega_{n\ell}}\int_{\mathbb{R}^N} J(x-y)(u_n(y)-u_n(x))^2\, {\rm d}y{\rm d}x 
\to 0.
$$
we obtain that the limit $u$ verifies 
\begin{equation} \label{gabito}
\begin{array}{l}
\displaystyle 
\frac{1}{2}\int_{\Omega_{n\ell}}\int_{\Omega_{n\ell}} J(x-y)(u(y)-u(x))^2\, {\rm d}y{\rm d}x  \\[7pt]
\qquad \displaystyle \leq \lim_{n \to \infty}\frac{1}{2}\int_{\Omega_{n\ell}}\int_{\Omega_{n\ell}} J(x-y)(u_n(y)-u_n(x))^2\, {\rm d}y{\rm d}x =0
\end{array}
\end{equation}
and 
\begin{equation} \label{eq.777}
\begin{array}{l}
\displaystyle
\frac{1}{2}\int_{\Omega_{n\ell}}\int_{\Omega_{\ell}} J(x-y)(k_1-u(x))^2\, {\rm d}y{\rm d}x \\[7pt]
\qquad \displaystyle  \leq \lim_{n\to \infty}
\frac{1}{2}\int_{\Omega_{n\ell}}\int_{\Omega_{\ell}} J(x-y)(u_n(y)-u_n(x))^2\, {\rm d}y{\rm d}x = 0.
\end{array}
\end{equation}

From the first inequality, \eqref{gabito}, using \ref{lema:J0implica_cte}, we obtain that $u \equiv k_2$ in $\Omega_{n\ell}$ (we use here that we assumed 
that $\Omega_{n\ell}$ is $\delta-$connected). 

From the second inequality, \eqref{eq.777}, we get that
$$
\frac{1}{2}\int_{\Omega_{n\ell}}\int_{\Omega_{\ell}} J(x-y)(k_1-k_2)^2\, {\rm d}y{\rm d}x =0
$$
and hence (assuming condition (3), that says that $\Omega_{n\ell}$ and $\Omega_{\ell}$ are $\delta-$connected between them, that is,
the distance between $\Omega_\ell$ and $\Omega_{n\ell}$ is strictly less than $\delta$ and that $J$ is strictly positive
in the ball $B_{2\delta}$), we must have
$
k_1 = k_2.$

Now, from the fact that $u_n \in H_{i}$ we have that
$
u_n =0 \mbox{ in } \mathbb{R}^N \setminus \Omega.
$
If $\partial \Omega \cap \partial \Omega_\ell \neq \emptyset$ we have that the trace of $u_n$ on
$\partial \Omega \cap \partial \Omega_\ell$ verifies
$
u_n|_{\partial \Omega \cap \partial \Omega_\ell}=0, 
$
and from the strong convergence $u_n \to k_1 $ in $H^1(\Omega_{\ell})$, we conclude that 
$
k_1 = 0
$
and hence
$
k_1 = k_2 = 0.
$
On the other hand, if $\partial \Omega \cap \partial \Omega_\ell = \emptyset$, then we use that
the nonlocal domain is in contact with the exterior of $\Omega$ and thus the nonlocal part of
the energy sees the Dirichlet boundary condition. In this case we have that
$dist(\Omega_{n\ell}, \mathbb{R}^N \setminus \Omega ) =0$. Now, we use that 
$
u_n =0$ in $\mathbb{R}^N \setminus \Omega $
together with
$$ 
\frac{1}{2}\int_{\Omega_{n\ell}}\int_{\mathbb{R}^N \setminus \Omega} J(x-y)(u_n(x))^2\, {\rm d}y{\rm d}x
\to 0,
$$
and that $u_n \to k_2$ weakly in $L^2(\Omega_{n\ell})$, to obtain
$
k_2=0
$
and then we again conclude that
$
k_1 = k_2 = 0
$
in this case.

Up to now we have that
$u_n \to 0 $ strongly in $H^1(\Omega_{\ell})$ and $u_n \to 0$ weakly in $L^2(\Omega_{n\ell})$.
From \ref{lema:contagio_escalar} we obtain that
$$u_n \to 0 \qquad \mbox{ strongly in }L^2(\Omega_{n\ell}).$$

Then, we have
$$
1= \int_\Omega |u_n (x)|^2 {\rm d}x = \int_{\Omega_{\ell}} |u_n (x)|^2 {\rm d}x + \int_{\Omega_{n\ell}} |u_n (x)|^2
{\rm d}x \to 0,
$$
a contradiction that proves the result. 
\end{proof}

\begin{remark} \label{re,.sec.dom} 
As we have mentioned in the introduction, our results hold for more general classes of domains $\Omega_\ell$, $\Omega_{n\ell}$.
In fact $\Omega_\ell$ may have several connected components (that we call $\Omega_\ell^1,..., \Omega^i_\ell$) and also 
$\Omega_{n\ell}$ can have several $\delta-$connected components (called $\Omega_{n\ell}^1,..., \Omega^j_{n\ell}$)
as long as, given two of such components, $A$, $B$, there are $A=C_1,C_2,...,C_l =B \in \{\Omega_\ell^1,..., \Omega^i_\ell, 
\Omega_{n\ell}^1,..., \Omega^j_{n\ell}\} $ with $C_i \subset \Omega_\ell$ for $i$ even
and $C_i \subset \Omega_{n\ell}$ for $i$ odd (that is we are alternating connected components 
of $\Omega_\ell$ and $\delta-$connected components of $\Omega_{n\ell}$) with $dist(C_i,C_{i+1})<\delta$. 

Under this more general condition, repeating the steps used in the previous proof (arguing by contradiction assuming that
there exists a sequence $u_n$ such that the conclusion does not hold), we obtain as limits of $u_n$
different constants $k_1,...,k_{i+j}$ in each of the components $\Omega_\ell^1,..., \Omega^i_\ell, 
\Omega_{n\ell}^1,..., \Omega^j_{n\ell}$. Next, we use that given two components, $A$, $B$, there are 
$A=C_1,C_2,...,C_l =B$ as above (the key here is that $dist(C_i,C_{i+1})<\delta$) to conclude that $k_A=k_{C_{2}} =....= k_B$
and therefore all the constants must be equal $k_1=...=k_{i+j}=k$. Next we use that at least one of the components 
touches the exterior of $\Omega$ to conclude that $k=0$ (we are using here that we have an homogeneous Dirichlet datum
 in $\mathbb{R}^N \setminus \Omega$). From this point the rest of the proof follows exactly as before until we reach a contradiction.
 
\begin{figure}[h]\label{Figura3}
\begin{center}
\includegraphics[scale=.5]{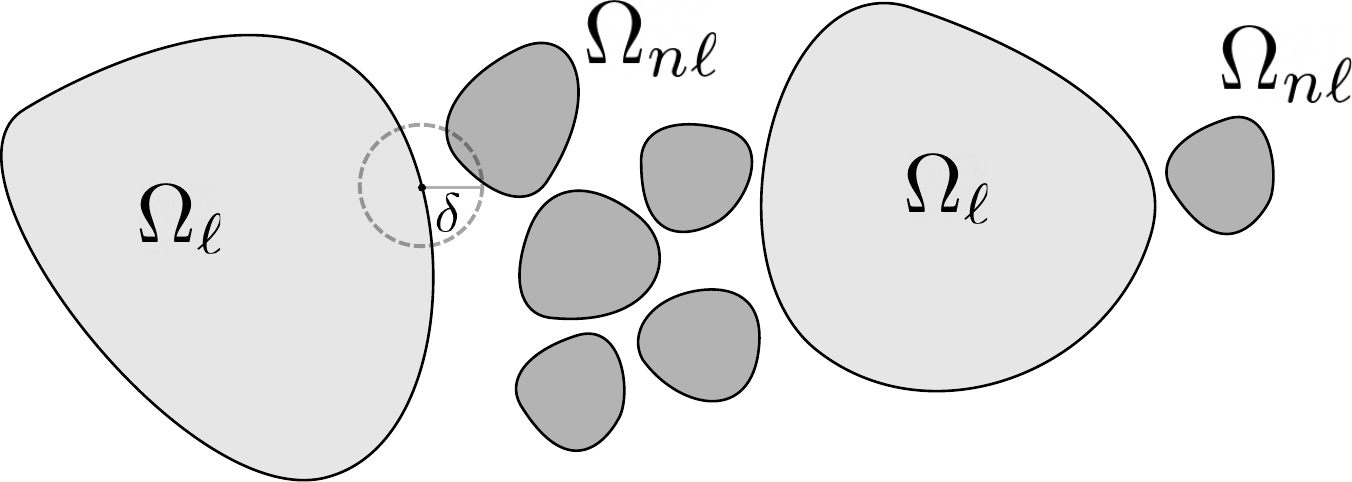}
		\caption{{A possible configuration of $\Omega_\ell$ and $\Omega_{n\ell}$ with several connected components.}}
\end{center}
\end{figure} 
\end{remark}

Now, existence of a unique minimizer follows easily by the direct method of calculus of variations.

\begin{proof}[Proof of \ref{teo.1.22}]
Using \ref{lema.1} and the fact that $f\in L^2(\Omega)$, we have that
$$
E_{i}(u) \geq C \int_{\Omega}u^2 (x) {\rm d}x - \int_\Omega f(x)u(x) {\rm d}x, 
$$
from where it follows that $E_{i}(u)$ is bounded below and coercive. Hence, existence of a minimizer follows
by the direct method of calculus of variations. Just take a minimizing sequence $u_n$ and extract a subsequence
that converges weakly in $L^2 (\Omega)$ and weakly in $H^1 (\Omega_\ell)$. 
Then, we have that the limit remains in $H_{i}$ (since $u_n \in H_i$ 
we obtain that its weak limits verify $u \in L^2 (\Omega)$, $u|_{\Omega_\ell} \in H^1 (\Omega_\ell)$, and 
$u|_{\partial \Omega \cap \partial \Omega_\ell}=0$,  $u = 0 \mbox{ in } \mathbb{R}^N \setminus \Omega$).
Now, we observe that using weak convergence in $L^2 (\Omega)$ we get 
$$
\lim_{n\to \infty} \int_\Omega f (x) u_n (x) {\rm d}x = \int_\Omega f(x) u (x) {\rm d}x
$$ 
and from weak convergence in $L^2 (\Omega)$ and in $H^1 (\Omega_\ell)$, we obtain
$$
\begin{array}{l}
\displaystyle 
\liminf_{n\to \infty} \int_{\Omega_\ell} |\nabla u_n(x) |^2 {\rm d}x + \frac{1}{2 }\int_{\Omega_{n\ell}}\int_{\mathbb{R}^N} J(x-y)(u_n(y)-u_n(x))^2\, {\rm d}y{\rm d}x
\\[7pt] 
\qquad \displaystyle \geq \int_{\Omega_\ell} |\nabla u (x)|^2 {\rm d}x + \frac{1}{2 }\int_{\Omega_{n\ell}}\int_{\mathbb{R}^N} J(x-y)(u(y)-u(x))^2\, {\rm d}y{\rm d}x.
\end{array}
$$
Then, we conclude that $u\in H_{i}$ and that $u$ is a minimizer,
$$E_{i}(u) = \min_{v\in H_{i}} E_{i}(v).$$ 
Uniqueness  follows from the strict convexity of the functional $E_{i}(u)$ in $H_{i}$.

Now, we can easily obtain the associated equation for $u$. 
Let $u$ be the minimizer of $E_{i}$ in $H_{i}$, then for every smooth $\varphi$
with $\varphi =0$ in $\mathbb{R}^N\setminus \Omega$ and every $t\in \mathbb{R}$ we have
$$
E_{i}(u+t \varphi) - E_{i}(u) \geq 0.
$$
Therefore, we have that 
$$
\frac{\partial }{\partial t} E_{i}(u+t\varphi) |_{t=0} =0,
$$
that is,
$$
\displaystyle \int_\Omega f \varphi = \int_{\Omega_\ell} \nabla u \nabla \varphi +
 \int_{\Omega_{n\ell}}\int_{\mathbb{R}^N} J(x-y)(u(y)-u(x)) (\varphi (y) - \varphi(x)) \,{\rm d}y{\rm d}x
$$
Now, we observe that
$$
\begin{array}{l}
\displaystyle 
 \int_{\Omega_{n\ell}}\int_{\mathbb{R}^N} J(x-y)(u(y)-u(x)) (\varphi (y) - \varphi(x)) \,{\rm d}y{\rm d}x
 \\[7pt]
\displaystyle \qquad \qquad 
 = \int_{\Omega_{n\ell}}\int_{\Omega_{n\ell}} J(x-y)(u(y)-u(x)) (\varphi (y) - \varphi(x)) \,{\rm d}y{\rm d}x
 \\[7pt]
\displaystyle \qquad \qquad \qquad 
 +  \int_{\Omega_{n\ell}}\int_{\mathbb{R}^N \setminus \Omega_{n\ell}} J(x-y)(u(y)-u(x)) (\varphi (y) - \varphi(x)) \,{\rm d}y{\rm d}x.
 \end{array}
$$

Using that the kernel is symmetric and Fubini's Theorem we obtain,
$$
\begin{array}{l}
\displaystyle 
\int_{\Omega_{n\ell}}\int_{\Omega_{n\ell}} J(x-y)(u(y)-u(x)) (\varphi (y) - \varphi(x)) \,{\rm d}y{\rm d}x 
 \\[7pt]
\displaystyle \qquad \qquad
=  -2 
 \int_{\Omega_{n\ell}}\int_{\Omega_{n\ell}} J(x-y)(u(y)-u(x))  \,{\rm d}y \, \varphi (x) {\rm d}x.
 \end{array}
$$

On the other hand, we have 
$$
\begin{array}{l}
\displaystyle 
 \int_{\Omega_{n\ell}}\int_{\mathbb{R}^N \setminus \Omega_{n\ell}} J(x-y)(u(y)-u(x)) (\varphi (y) - \varphi(x)) \,{\rm d}y{\rm d}x
  \\[7pt]
\displaystyle \qquad \qquad
= - \int_{\mathbb{R}^N \setminus \Omega_{n\ell }}  \int_{\Omega_{n\ell}} J(x-y)(u(y)-u(x)) \varphi (x)  \,{\rm d}y{\rm d}x
 \\[7pt]
\displaystyle \qquad \qquad \qquad
- \int_{\Omega_{n\ell}}\int_{\mathbb{R}^N \setminus \Omega_{n\ell}} J(x-y)(u(y)-u(x)) \varphi(x) \,{\rm d}y{\rm d}x
\end{array}
$$

Hence, using that $u=\varphi =0$ in $\mathbb{R}^N \setminus \Omega$, we conclude that
$$
\begin{array}{l}
\displaystyle \int_\Omega f \varphi = \int_{\Omega_\ell} \nabla u \nabla \varphi - 
2 \int_{\Omega_{n\ell}}\int_{\Omega_{n\ell}} J(x-y)(u(y)-u(x))  \,{\rm d}y \, \varphi (x) {\rm d}x \\[7pt]
\displaystyle \qquad \qquad - \int_{\Omega_{n\ell}}\int_{\mathbb{R}^N \setminus \Omega_{n\ell}} J(x-y)(u(y)-u(x))  \,{\rm d}y \, \varphi (x) {\rm d}x
\\[7pt]
\displaystyle \qquad \qquad  - \int_{\Omega_{\ell}}\int_{ \Omega_{n\ell} } J(x-y)(u(y)-u(x))  \,{\rm d}y \, \varphi (x){\rm d}x
\end{array}
$$
from where if follows that $u$ is a weak solution to \eqref{eq:main.Dirichlet.local.2277} 
and 
\eqref{eq:main.Dirichlet.nonlocal.2277}.
\end{proof}

\subsection{{Second model} Coupling local/nonlocal problems via flux terms}
Our aim now is to look for a scalar problem with an energy that combines local and nonlocal terms 
acting in different subdomains, $\Omega_\ell$ and $\Omega_{n\ell}$, of $\Omega$, but now the coupling
is made balancing the fluxes.

Recall from the Introduction that we also look at the energy given by 
\begin{equation}\label{def:funcional_dirichlet.intro.79}
\begin{array}{l}
\displaystyle
E_{ii}(u):=\int_{\Omega_\ell} \frac{|\nabla u(x)|^2}{2} {\rm d}x + \frac{1}{2}\int_{\Omega_{n\ell}}\int_{\mathbb{R}^N\setminus \Omega_{\ell} }
J(x-y)(u(y)-u(x))^2\, {\rm d}y{\rm d}x \\[7pt]
\qquad \qquad \quad \displaystyle 
+ \frac{1}{2}\int_{\Omega_{nl}}\int_{\Gamma}G(x,z)\left(u(x)-u(z)\right)^2 {\rm d} \sigma(z) {\rm d}x
- \int_\Omega f(x) u(x) {\rm d}x.
\end{array}
\end{equation}

We take a fixed exterior Dirichlet datum
$
u(x) = 0$, for $ x \not\in \Omega$,
and then we look for minimizers in
$$
H_{ii} = \Big\{ u \in L^2 (\Omega), u|_{\Omega_\ell} =u  \in H^1 (\Omega_\ell), 
 u =0 \mbox{ in } \mathbb{R}^N \setminus \Omega \Big\}.
$$

Again in this case we need a key lemma, similar to \ref{lema.1}, that is needed to obtain coerciveness of the functional. 

\begin{lemma} There exists a constant $C>0$ such that
\begin{equation} \label{cota.clave.22.Dir}
\begin{array}{l}
\displaystyle
\int_{\Omega_\ell} \frac{|\nabla u (x)|^2}{2} {\rm d}x + \frac{1}{2}\int_{\Omega_{n\ell}}\int_{\mathbb{R}^N\setminus \Omega_{\ell} }
J(x-y)(u(y)-u(x))^2\, {\rm d}y{\rm d}x \\[7pt]
\qquad \qquad \quad \displaystyle 
+ \frac{1}{2}\int_{\Omega_{nl}}\int_{\Gamma}G(x,z)\left(u(x)-u(z)\right)^2 {\rm d} \sigma(z) {\rm d}x
\geq  C  \int_{\Omega} |u (x)|^2 {\rm d}x ,
\end{array}
\end{equation}
for every $u \in H_{ii}$.
\end{lemma}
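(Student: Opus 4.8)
The plan is to argue by contradiction, following the scheme of the proof of \ref{lema.1}; the only genuinely new ingredient is that the interface $\Gamma$ now plays the role that the volumetric overlap between $\Omega_\ell$ and $\Omega_{n\ell}$ played there. So I would suppose no such $C$ exists and pick a sequence $u_n\in H_{ii}$ with $\int_\Omega |u_n|^2\,{\rm d}x=1$ for which the left-hand side of \eqref{cota.clave.22.Dir} tends to $0$; then each of its three nonnegative terms tends to $0$. From $\int_{\Omega_\ell}|\nabla u_n|^2\,{\rm d}x\to 0$, the bound $\int_{\Omega_\ell}|u_n|^2\,{\rm d}x\le 1$, the connectedness of $\Omega_\ell$ and the compact embedding $H^1(\Omega_\ell)\hookrightarrow L^2(\Omega_\ell)$, one gets, along a subsequence, $u_n\to k_1$ strongly in $H^1(\Omega_\ell)$ for a constant $k_1$, and, after a further subsequence, $u_n\rightharpoonup u$ weakly in $L^2(\Omega_{n\ell})$. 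Since $\mathbb{R}^N\setminus\Omega_\ell$ agrees, up to a null set, with $\Omega_{n\ell}\cup(\mathbb{R}^N\setminus\Omega)$, the vanishing of the second term of \eqref{cota.clave.22.Dir} yields in particular $\int_{\Omega_{n\ell}}\int_{\Omega_{n\ell}}J(x-y)(u_n(y)-u_n(x))^2\,{\rm d}y\,{\rm d}x\to 0$; then weak lower semicontinuity (as in \eqref{gabito}), \ref{lema:J0implica_cte} and the $\delta$-connectedness of $\Omega_{n\ell}$ give $u\equiv k_2$ in $\Omega_{n\ell}$ for a constant $k_2$.

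The new step is to identify $k_1$ with $k_2$. Since $u_n\to k_1$ strongly in $H^1(\Omega_\ell)$ and $\Omega_\ell$ has Lipschitz boundary, the traces converge: $u_n|_\Gamma\to k_1$ in $L^2(\Gamma)$. Using $(u_n(x)-u_n(z))^2\ge \tfrac{1}{2}(u_n(x)-k_1)^2-(u_n(z)-k_1)^2$ and a mild integrability property of $G$ (namely $\sup_{z\in\Gamma}\int_{\Omega_{n\ell}}G(x,z)\,{\rm d}x<\infty$, which is in any case needed for $E_{ii}$ to be finite on $H_{ii}$), the vanishing of the third term of \eqref{cota.clave.22.Dir} forces $\int_{\Omega_{n\ell}}\int_{\Gamma}G(x,z)(u_n(x)-k_1)^2\,{\rm d}\sigma(z)\,{\rm d}x\to 0$. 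By $(P2)$ there is a ball $B\subset\Omega_{n\ell}$ all of whose points lie within distance $2\delta$ of $\Gamma$; combining $(G1)$ with a compactness argument as in the proof of \ref{lema:contagio_escalar} (the map $x\mapsto\int_\Gamma G(x,z)\,{\rm d}\sigma(z)$ is bounded below by a positive constant on a nontrivial subset of $B$), one obtains an open set $A_0\subset\Omega_{n\ell}$ of positive measure on which $u_n\to k_1$ strongly in $L^2$. Comparing with $u_n\rightharpoonup k_2$ weakly in $L^2(\Omega_{n\ell})$ yields $k_1=k_2=:k$.

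Next I would show $k=0$, distinguishing the same two cases as in \ref{lema.1}. If $\partial\Omega\cap\partial\Omega_\ell\neq\emptyset$, then membership in $H_{ii}$ gives that $u_n$ has vanishing trace on $\partial\Omega\cap\partial\Omega_\ell$, and strong $H^1(\Omega_\ell)$ convergence forces $k_1=0$. If $\partial\Omega\cap\partial\Omega_\ell=\emptyset$, then $dist(\Omega_{n\ell},\mathbb{R}^N\setminus\Omega)=0$; since $u_n=0$ on $\mathbb{R}^N\setminus\Omega\subset\mathbb{R}^N\setminus\Omega_\ell$, the second term of \eqref{cota.clave.22.Dir} controls $\int_{\Omega_{n\ell}}\int_{\mathbb{R}^N\setminus\Omega}J(x-y)\,u_n(x)^2\,{\rm d}y\,{\rm d}x\to 0$, and the compactness argument (now applied to $x\mapsto\int_{\mathbb{R}^N\setminus\Omega}J(x-y)\,{\rm d}y$) produces a positive-measure subset of $\Omega_{n\ell}$ on which $u_n\to 0$ in $L^2$, hence $k_2=0$. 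In either case $k=0$, and in particular $u_n\to 0$ strongly in $L^2(A_0)$.

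It remains to upgrade this to strong convergence on all of $\Omega_{n\ell}$. With $k=0$ one has $u_n\to 0$ strongly in $H^1(\Omega_\ell)$, strongly in $L^2(A_0)$ with $A_0\subset\Omega_{n\ell}$ of positive measure, weakly in $L^2(\Omega_{n\ell})$, and $\int_{\Omega_{n\ell}}\int_{\Omega_{n\ell}}J(x-y)(u_n(y)-u_n(x))^2\,{\rm d}y\,{\rm d}x\to 0$. The iteration used in the proof of \ref{lema:contagio_escalar} — which, once strong $L^2$-convergence is known on one subdomain, only uses interactions with $(x,y)\in\Omega_{n\ell}\times\Omega_{n\ell}$, all of which are present in the second term of \eqref{cota.clave.22.Dir} since $\Omega_{n\ell}\subset\mathbb{R}^N\setminus\Omega_\ell$, together with the $\delta$-connectedness and boundedness of $\Omega_{n\ell}$ — then gives $u_n\to 0$ strongly in $L^2(\Omega_{n\ell})$. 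Hence $1=\int_\Omega|u_n|^2\,{\rm d}x=\int_{\Omega_\ell}|u_n|^2\,{\rm d}x+\int_{\Omega_{n\ell}}|u_n|^2\,{\rm d}x\to 0$, a contradiction. The point requiring the most care is the interface term: extracting pointwise-in-$\Omega_{n\ell}$ information from the double integral over $\Omega_{n\ell}\times\Gamma$ needs both the trace convergence on $\Gamma$ and the non-degeneracy of $G$ near $\Gamma$ furnished by $(G1)$ and $(P2)$ (via a compactness argument); once $k_1=k_2$ and $k=0$ are in hand, the remainder is exactly the argument already used for the first model.
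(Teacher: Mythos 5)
Your proposal is correct and follows essentially the same contradiction argument as the paper: the gradient term gives $u_n\to k_1$ strongly in $H^1(\Omega_\ell)$ hence (by the trace theorem) in $L^2(\Gamma)$; the volumetric nonlocal term plus \ref{lema:J0implica_cte} gives $u_n\rightharpoonup k_2$ in $L^2(\Omega_{n\ell})$; the interface term gives $k_1=k_2$; the exterior Dirichlet datum gives $k=0$; and finally one propagates strong $L^2$ convergence from an initial positive-measure seed set across $\Omega_{n\ell}$ using $\delta$-connectedness, exactly as in \ref{lema:contagio_escalar}. The one minor variation is in the $k_1=k_2$ step, where the paper invokes weak lower semicontinuity of the coupling form to conclude $\int_{\Omega_{n\ell}}\int_\Gamma G(x,z)(k_2-k_1)^2\,{\rm d}\sigma(z)\,{\rm d}x=0$ directly, whereas you first upgrade to strong $L^2$ convergence $u_n\to k_1$ on a positive-measure subset $A_0\subset\Omega_{n\ell}$ (via $(G1)$, $(P2)$ and Young's inequality) and then compare with the weak limit; this is equally valid and conveniently furnishes the seed set used in the final propagation step.
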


\begin{proof}
We argue by contradiction, then we assume that there is a sequence $w_n\in H_{ii}$ such that
		$$
		\int_\Omega |u_n (x)|^2 {\rm d}x =1
		$$
		and 
		$$
		\begin{array}{l}
\displaystyle \int_{\Omega_\ell} \frac{|\nabla u_n (x)|^2}{2} {\rm d}x 
+ \frac{1}{2}\int_{\Omega_{n\ell}}\int_{\mathbb{R}^N\setminus \Omega_{\ell} } J(x-y)(u_n(y)-u_n(x))^2\, {\rm d}y{\rm d}x \\[7pt]
\qquad \displaystyle 
+ \frac{1}{2}\int_{\Omega_{nl}}\int_{\Gamma}G(x,z)\left(u_n(x)-u_n(z)\right)^2 {\rm d}\sigma(z) {\rm d}x
		\to 0.
		\end{array}
		$$
		Then we have that
		$$
		\int_{\Omega_\ell} |\nabla u_n (x)|^2 {\rm d}x \to 0, 
		$$
		$$
	     \int_{\Omega_{nl}}\int_{\Gamma}G(x,z)\left(u_n(x)-u_n(z)\right)^2 {\rm d}\sigma(z) {\rm d}x \to 0,
		$$
		and
		$$ 
       \int_{\Omega_{n\ell}}\int_{\mathbb{R}^N\setminus \Omega_{\ell} } J(x-y)(u_n(y)-u_n(x))^2\, {\rm d}y{\rm d}x
		\to 0.
		$$
		
		Since 
		$$
		\int_\Omega |u_n(x)|^2 {\rm d}x = \int_{\Omega_\ell} |u_n (x)|^2 {\rm d}x + \int_{\Omega_{n\ell}} |u_n (x)|^2 {\rm d}x =1
		$$
 we obtain that $u_n|_{\Omega_\ell}$ is bounded in $L^2(\Omega_\ell)$ and then, using that, 
		$$
		\int_{\Omega_\ell} \frac{|\nabla u_n (x)|^2}{2} {\rm d}x \to 0 
		$$
		we get that there exists a constant $k_1$, such that, along a subsequence, 
		$
		u_n \to k_1
		$
		strongly in $H^1(\Omega_\ell)$. Hence, using the trace theorem in $\Gamma$ 
		we obtain
		$
		u_n \to k_1
		$
		strongly $L^2(\Gamma)$.

		Now we argue in the nonlocal part $\Omega_{n\ell}$. Since $u_n|_{\Omega_{n\ell}}$ is bounded in $L^2 (\Omega_{n\ell})$ 
		we have that (extracting another subsequence if necessary)
		$
		u_n \rightharpoonup u
		$
		weakly in $L^2(\Omega_{n\ell})$. Now, we have that
		$$ 
		\int_{\Omega_{n\ell}}\int_{\mathbb{R}^N\setminus \Omega_{\ell} } J(x-y)(u_n(y)-u_n(x))^2\, {\rm d}y{\rm d}x \to 0
		$$
		 and therefore, 
		 $$ 
		\int_{\Omega_{n\ell}}\int_{\Omega_{n\ell} } J(x-y)(u_n(y)-u_n(x))^2\, {\rm d}y{\rm d}x \to 0.
		$$
		Hence, the weak limit satisfies, 
		$$
		\int_{\Omega_{n\ell}} \!\!\int_{\Omega_{n\ell}} J(x-y)(u(y)-u(x))^2{\rm d}y{\rm d}x \!
		\leq \! \lim_{n \to \infty}\!\! \frac{1}{2} \!\!\int_{\Omega_{n\ell}} \!\! \int_{\Omega_{n\ell}} J(x-y)(u_n(y)-u_n(x))^2 {\rm d}y{\rm d}x =0.
		$$
		Hence, we get that $u \equiv k_2$ in $\Omega_{n\ell}$ (again here we use here that $\Omega_{n\ell}$ is $\delta-$connected).
	
From the weak convergence of $u_n$ to $u$ in
 $L^2(\Omega_{n\ell})$ and the strong convergence of $u_n$ to $u$ in
 $L^2(\Gamma)$ we get
$$
\int_{\Omega_{nl}}\int_{\Gamma}G(x,z)\left(k_2-k_1\right)^2 {\rm d}\sigma(z) {\rm d}x =0.
$$
		
Hence, from the fact that
$$
\int_{\Omega_{nl}}\int_{\Gamma}G(x,z) {\rm d}\sigma(z) {\rm d}x >0,
$$ 
we obtain 
$
k_1 = k_2.
$

Now, from the fact that $u_n \in H_{ii}$ 
if $\partial \Omega \cap \partial \Omega_\ell \neq \emptyset$ we have that the trace of $u_n$ on
$\partial \Omega \cap \partial \Omega_\ell$ verifies
$
u_n|_{\partial \Omega \cap \partial \Omega_\ell}=0, 
$
and from the strong convergence $u_n \to k_1 $ strongly in $H^1(\Omega_{\ell})$, we conclude that 
$
k_1 = 0
$
and hence
$
k_1 = k_2 = 0.
$
On the other hand, if $\partial \Omega \cap \partial \Omega_\ell = \emptyset$, then, as we did before, 
we use that 
$
u_n =0$ in $ \mathbb{R}^N \setminus \Omega $
together with
$$ 
\frac{1}{2}\int_{\Omega_{n\ell}}\int_{\mathbb{R}^N \setminus \Omega} J(x-y)(u_n(x))^2\, {\rm d}y{\rm d}x
\to 0,
$$
and that $u_n \to k_2$ weakly in $L^2(\Omega_{n\ell})$, to obtain
$
k_2=0
$
and then we again conclude that
$
k_1 = k_2 = 0,
$
in this case.
		
		Up to now we have that
		$u_n \to 0 $ strongly in $H^1(\Omega_{\ell})$ and $v_n \to 0$ weakly in $L^2(\Omega_{n\ell})$.
		Then, as
		$$
		1= \int_\Omega |u_n(x)|^2 {\rm d}x = \int_{\Omega_{\ell}} |u_n (x)|^2 {\rm d}x + \int_{\Omega_{n\ell}} |u_n (x)|^2 {\rm d}x
		$$
		we get that
		$$
		\int_{\Omega_{n\ell}} |u_n (x)|^2 {\rm d}x \to 1.
		$$
		
		Now, we need a result like \ref{lema:contagio_escalar} to conclude, but notice that here we have to 
		adapt the argument since we want to propagate the strong convergence to zero in $L^2$ from $\Gamma$
		to the nonlocal region $\Omega_{n\ell}$. To this end, we go back to 
		$$
\int_{\Omega_{nl}}\int_{\Gamma}G(x,z)\left(u_n(x)-u_n(z)\right)^2 {\rm d}\sigma(z) {\rm d}x 
		\to 0.
		$$
		and we obtain
		$$
		\begin{array}{l}
		\displaystyle 0 = \lim_{n \to \infty}
		\int_{\Omega_{nl}}\int_{\Gamma}G(x,z)\left(u_n(x)-u_n(z)\right)^2 {\rm d}\sigma(z) {\rm d}x \\[7pt]
		\qquad \displaystyle
		= \lim_{n \to \infty}
		\int_{\Omega_{nl}}\int_{\Gamma}G(x,z)|u_n(x)|^2 {\rm d}\sigma(z) {\rm d}x
		\\[7pt]
		\qquad \displaystyle
		\qquad +  \lim_{n \to \infty} \int_{\Omega_{nl}}\int_{\Gamma}G(x,z)|u_n(z)|^2 {\rm d}\sigma(z) {\rm d}x
		\\[7pt]
		\qquad \displaystyle
		\qquad -  \lim_{n \to \infty} 2 \int_{\Omega_{nl}}\int_{\Gamma}G(x,z)u_n(x)u_n(z) {\rm d}\sigma(z) {\rm d}x.
		\end{array}
		$$
		Since $u_n \to 0$ strongly in $L^2(\Gamma)$, we have that 
		$$
		\lim_{n \to \infty}
		 \int_{\Omega_{nl}}\int_{\Gamma}G(x,z)|u_n(z)|^2 {\rm d}\sigma(z) {\rm d}x = 0
		$$
		and 
		$$
		\lim_{n \to \infty} \int_{\Omega_{nl}}\int_{\Gamma}G(x,z)u_n(x)u_n(z) {\rm d}\sigma(z) {\rm d}x = 0.
		$$
		Therefore, we obtain that 
		\begin{equation}\label{acosta1.22}
		\lim_{n \to \infty} \int_{\Omega_{n\ell}} |u_n(x)|^2 \int_{\Gamma} G(x,z)  \, {\rm d}\sigma(z) {\rm d}x = 0.
		\end{equation}
		Let $
		B_1 = \{ x \in \Omega_{n\ell} :  \int_{\Gamma} G(x,z)  \, {\rm d}\sigma(z) >0 \}.
		$
		We have that $u_n \to 0$ strongly in $L^2(B_1)$ 
		Notice that $|B_1|>0$.
		Now, take 
		$
		B_2 = \{ x \in \Omega_{n\ell} :  dist(x, B_1) <\delta \}.
		$
		We have
		$$
		\begin{array}{l}
		\displaystyle \lim_{n \to \infty}
		\int_{B_1}\int_{B_2} J(x-y)(u_n(y)-u_n(x))^2\, {\rm d}y{\rm d}x \\[7pt]
		\qquad \displaystyle
		\leq \lim_{n \to \infty} \int_{\Omega_{n\ell}}\int_{\Omega_{n\ell}} J(x-y) |u_n(x)-u_n (y)|^2 \, {\rm d}y{\rm d}x =0
		\end{array}
		$$
		Hence, we get
		$$
		\begin{array}{l}
		\displaystyle 0=\lim_{n \to \infty}
		\int_{B_1}\int_{B_2} J(x-y)(u_n(y)-u_n(x))^2\, {\rm d}y{\rm d}x \\[7pt]
		\qquad \displaystyle
		= \lim_{n \to \infty}\int_{B_1}\int_{B_2} J(x-y) |u_n(x)|^2-u_n (y)|^2 \, {\rm d}y{\rm d}x \\[7pt]
		\qquad \qquad \displaystyle
		+ \lim_{n \to \infty}\int_{B_1}\int_{B_2} J(x-y) u_n(x) u_n (y) \, {\rm d}y{\rm d}x \\[7pt]
		\qquad \qquad \displaystyle
		+ \lim_{n \to \infty}\int_{B_2} |u_n (y)|^2 \int_{B_1}J(x-y)  \, {\rm d}x \, {\rm d}y.
		\end{array}
		$$
		As $u_n \to 0$ strongly in $L^2(B_1)$ and $\int_{B_1}J(x-y)  \, {\rm d}x \geq c > 0$ for every 
		$y \in B_2$, we conclude that $u_n \to 0$ strongly in $L^2(B_2)$.
		Iterating this procedure a finite number of times we obtain that 
		 $u_n \to 0$ strongly in $L^2 (\Omega_{n\ell})$,
		a contradiction with the fact that
		$$
		\int_{\Omega_{n\ell}} |u_n (x)|^2 {\rm d}x \to 1.
		$$
		This ends the proof.
		\end{proof}

\begin{proof}[Proof of \ref{teo.1.44}] 
The proof is similar to the proof of \ref{teo.1.22}.
The functional $E_{ii}(u)$ is  weakly semicontinuous, bounded below and coercive in $H_{ii}$ that is
weakly closed. Hence, existence of a minimizer follows
by the direct method of calculus of variations and its uniqueness follows from the strict convexity.

Finally, we obtain the associated equation with Dirichlet boundary conditions.
Let $u$ be the minimizer of $E_{ii}(u)$ in $H_{ii}$, then for every smooth $\varphi$
in $H_{ii}$ and every $t\in \mathbb{R}$ we have
$$
E_{ii}(u+t \varphi) - E_{ii}
(u) \geq 0.
$$
Therefore, we get that 
$$
\frac{\partial }{\partial t} E_{ii}(u+t\varphi) |_{t=0} =0,
$$
that is,
$$
\begin{array}{l}
\displaystyle \int_\Omega f \varphi = \int_{\Omega_\ell} \nabla u \nabla \varphi +
 \int_{\Omega_{n\ell}}\int_{\mathbb{R}^N\setminus \Omega_{\ell}} J(x-y)(u(y)-u(x)) (\varphi (y) -\varphi(x)) \,{\rm d}y{\rm d}x
 \\[7pt]
\displaystyle \qquad \qquad  + \int_{\Omega_{n\ell}}\int_{\Gamma} G(x,y)(u(y)-u(x)) \varphi (y) \, {\rm d} \sigma(y) {\rm d}x
\\[7pt]
\displaystyle \qquad \qquad  - \int_{\Omega_{n\ell}}\int_{\Gamma} G(x,y)(u(y)-u(x)) \varphi (x) \, {\rm d} \sigma(y) {\rm d}x.
\end{array}
$$
Using that the kernel $J$ is symmetric we obtain
$$
\begin{array}{l}
\displaystyle \int_\Omega f \varphi = \int_{\Omega_\ell} \nabla u \nabla \varphi - 2
 \int_{\Omega_{n\ell}}\int_{\Omega_{n\ell}} J(x-y)(u(y)-u(x))  \,{\rm d}y\, \varphi(x) {\rm d}x \\[7pt]
\displaystyle \qquad \qquad -
 \int_{\Omega_{n\ell}}\int_{\mathbb{R}^N\setminus \Omega } J(x-y)(u(y)-u(x)) \varphi(x) \,{\rm d}y{\rm d}x
 \\[7pt]
\displaystyle \qquad \qquad  - \int_{\Gamma} \int_{\Omega_{n\ell}} G(y,x)(u(y)-u(x))  {\rm d}y \varphi (x) \, {\rm d} \sigma(x)
\\[7pt]
\displaystyle \qquad \qquad  - \int_{\Omega_{n\ell}}\int_{\Gamma} G(x,y)(u(y)-u(x))  \, {\rm d} \sigma(y)\, \varphi (x){\rm d}x,
\end{array}
$$
that is, $u$ is a weak solution to \eqref{eq:main.Dirichlet.local.44} and 
\eqref{eq:main.Dirichlet.nonlocal.44}.
\end{proof}

\section{The vectorial case} \label{sect-vectorial}

We begin this section by recalling a classical result in  elasticity.  From  definition  \eqref{eq:defidestrain},  we see that   
\begin{equation}
\label{eq:gradepsilon}
 \|\LE(U)\|_{L^2(\Omega)^{N\times N}}\le C \| \nabla U\|_{L^2(\Omega)^{N\times N}},
\end{equation}
and since  the set of \emph{infinitesimal rigid movements}, defined as
$$
RM:=\Big\{R(x)=
Mx+p,\,
\mbox{with}\, M\in \RR^{N\times N}, 
M=-M^T \, \mbox{and}\, p\in \RR^N \Big\},
$$
is clearly included in the kernel of $\LE$, \eqref{eq:gradepsilon} can not be reversed in general.  Nonetheless,  Korn's inequalities assert  that under appropriate assumptions $\LE(U)$ can play the role of $\nabla U$. In particular, the following results hold for $\Omega\subset \RR^N$ belonging to a rather general class of domains (in particular for Lipschitz domains), see \cite{AD} for the proofs. It holds that 
$$ 
H^1(\Omega)^N\equiv 
\Big\{U\in L^2(\Omega)^N:\LE(U)\in  L^2(\Omega)^{N\times N} \Big\},
$$
which amounts to
\begin{equation}
 \label{eq:eqiv_H1}
\|U\|_{L^2(\Omega,\RR^{N})}+\|\nabla U\|_{L^2(\Omega,\RR^{N\times N})}
\sim
\|U\|_{L^2(\Omega,\RR^{N})}
+\|\LE(U)\|_{L^2(\Omega,\RR^{N\times N})}
 \end{equation}
and the following inequality 
\begin{equation}
 \label{eq:korn_cociente}
\inf_{R\in RM}\|U-R\|_{H^1(\Omega,\RR^{N\times N})}\le C\|\LE(U)\|_{L^2(\Omega,\RR^{N\times N})}  
\end{equation}
for a constant $C=C(\Omega)$ (that gives in particular a reversed version of \eqref{eq:gradepsilon}).

An immediate consequence of \eqref{eq:korn_cociente} is the equivalence
$$
U\in RM \Leftrightarrow \LE(U)=0. 
$$
The nonlocal counterpart enjoys a similar property that plays -in the vectorial setting- the role of \ref{lema:J0implica_cte} in the scalar case. We state it as \ref{lema:nucleodelepsilon} (see also Lemma 2.4 in \cite{MenDu}).

\begin{lemma} 
\label{eq:caractRigid}
Assume that 
\begin{equation}
 \label{eq:nolocal-epsilon}
\int_{B_{r_0}(z_0)}\int_{B_{r_0}(z_0)} 
|(x-y) \cdot (U(x)-U(y))|^2\,{\rm d}y{\rm d}x=0 
\end{equation}
for some ball $B_{r_0}(z_0)\subset   \RR^N$ and $U\in L^2(\Omega,\RR^n)$, then $$U(x)=Mx+b$$ with $b\in \RR^N$ and  $M\in \RR^{N\times N}$, $M^T=-M$.
\end{lemma}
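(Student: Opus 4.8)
The plan is to show that the hypothesis forces $U(x)-U(y)$ to be orthogonal to $x-y$ for a.e.\ pair $(x,y)$ in the ball, and then to upgrade this infinitesimal rigidity condition to the conclusion $U(x)=Mx+b$ with $M$ skew-symmetric. The first step is immediate: since the integrand $|(x-y)\cdot(U(x)-U(y))|^2$ is nonnegative and its double integral over $B_{r_0}(z_0)\times B_{r_0}(z_0)$ vanishes, we get
\[
(x-y)\cdot\big(U(x)-U(y)\big)=0\qquad\text{for a.e. }(x,y)\in B_{r_0}(z_0)\times B_{r_0}(z_0).
\]
Thus the real work is the pointwise-to-affine implication: a measurable (indeed $L^2$) field $U$ on a ball satisfying this orthogonality relation a.e.\ must be an infinitesimal rigid motion.

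First I would reduce regularity: mollify $U$. Let $U_\varepsilon=U*\rho_\varepsilon$ on a slightly smaller concentric ball. Using the bilinearity of $(x-y)\cdot(U(x)-U(y))$ and Fubini, one checks that $U_\varepsilon$ still satisfies $(x-y)\cdot(U_\varepsilon(x)-U_\varepsilon(y))=0$ for all $x,y$ in the smaller ball (the translation-invariance of the condition is what makes mollification preserve it; this is the one routine computation I would actually need to carry out carefully). So it suffices to treat smooth $U$. For smooth $U$, differentiate the identity $(x-y)\cdot(U(x)-U(y))=0$ in $x$ and then in $y$: differentiating in $x_k$ gives $(U(x)-U(y))_k+(x-y)\cdot\partial_{x_k}U(x)=0$; differentiating that in $y_j$ gives $-\partial_{y_j}U(y)_k-\big(\partial_{x_k}U(x)\big)_j=0$ evaluated with the obvious index bookkeeping, i.e.\ $\partial_j U_k(y)+\partial_k U_j(x)=0$ for all $x,y$. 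Setting $x=y$ yields $\partial_j U_k+\partial_k U_j=0$, i.e.\ $\mathcal{E}(U)=0$ on the ball, and then the $x\ne y$ version forces $\partial_k U_j$ to be constant (it equals $-\partial_j U_k(x)$, which depends only on $x$, and also only on $y$). Hence $\nabla U$ is a constant matrix $M$ with $M^T=-M$, so $U(x)=Mx+b$ on the smaller ball; letting $\varepsilon\to0$ and exhausting the ball transfers this to $U$ itself a.e.

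Alternatively, and perhaps cleaner to write, I would invoke Korn's inequality \eqref{eq:korn_cociente} directly: the a.e.\ identity implies $\mathcal{E}(U)=0$ in $L^2(B_{r_0}(z_0))$ — indeed, polarizing the condition or testing against translates shows the symmetric gradient vanishes — and then \eqref{eq:korn_cociente} applied on the ball gives $\inf_{R\in RM}\|U-R\|_{H^1}\le C\|\mathcal{E}(U)\|_{L^2}=0$, so $U\in RM$, which is exactly the desired form. The main obstacle is making rigorous the passage "orthogonality a.e.\ $\Rightarrow$ $\mathcal{E}(U)=0$" for a merely $L^2$ field without assuming a priori that $\nabla U$ exists; the mollification argument above is the safe route, and once $\mathcal{E}(U)=0$ is established the conclusion is standard (either by the elementary ODE-type argument on $\nabla U$, or by Korn). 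Since the paper cites Lemma 2.4 of \cite{MenDu} for essentially this statement, I expect the authors to give the short mollification-plus-differentiation argument and then quote the characterization $\mathcal{E}(U)=0\Leftrightarrow U\in RM$.
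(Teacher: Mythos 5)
Your proof is correct, and the structure is close in spirit to the paper's but organized differently. The paper does not mollify: instead, using the hypothesis and Fubini twice (first averaging in $y$, then in $x$), it shows directly that $U(y_0)=\strokedint_{B_\rho(y_0)}U$ for a.e.\ $y_0$ and every small $\rho$, from which $U$ has a continuous representative that is in fact harmonic; the pointwise identity $(x-y)\cdot(U(x)-U(y))=0$ then holds for \emph{all} $x,y$, and a short differentiation computes $DU(z_0)$ and shows it is skew-symmetric and constant. You instead mollify $U$, observe (correctly) that the orthogonality condition is preserved under simultaneous translation of $x,y$ and hence under mollification on a slightly smaller ball, differentiate the smooth identity in $x_k$ and $y_j$ to obtain $\partial_j U_k(y)+\partial_k U_j(x)=0$ for all $x,y$ (hence $\mathcal{E}(U_\varepsilon)=0$ and $\nabla U_\varepsilon$ constant and skew), and pass to the limit. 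Both smoothing strategies are valid; the paper's is slightly more self-contained and yields the bonus that $U$ is harmonic, while yours is more routine. You also correctly flag that the bare Korn-inequality shortcut has a gap (one cannot read off $\mathcal{E}(U)=0$ in $L^2$ directly from an a.e.\ pointwise condition on an $L^2$ field without first establishing $\nabla U$ exists), and you rightly designate the mollification argument as the safe route.
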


\begin{proof}
First we show that $U$ is a continuous function (it has a continuous representative). Take $0<r<r_0$ and $\rho<r_0-r$.  For  any $y_0\in B_r(z_0)$, \eqref{eq:nolocal-epsilon} and Fubini's Theorem says that
$$
\strokedint_{B_\rho(y_0)} (x-y) \cdot (U(x)-U(y))\, {\rm d}y=\frac{1}{|B_\rho(y_0)|}\int_{B_\rho(y_0)}(x-y) \cdot (U(x)-U(y))\, {\rm d}y=0,
$$
a.e. $x\in B_r(z_0)$.
Calling $\bar U=\strokedint_{B_\rho(y_0)}U(y)\, {\rm d}y$ and using that $\strokedint_{B_\rho(y_0)}y\, {\rm d}y=y_0$, we have
$$
\begin{array}{l} 
\displaystyle 
\strokedint_{B_\rho(y_0)} (x-y) \cdot (U(x)-U(y))\, {\rm d}y  
 =
x\cdot( U(x)- \bar U) -y_0\cdot U(x) + \strokedint_{B_\rho(y_0)}y\cdot U(y)\, {\rm d}y \\[7pt]
\displaystyle \qquad =
(x-y_0)\cdot( U(x)- \bar U)
+\strokedint_{B_\rho(y_0)}y\cdot (U(y)-\bar{U})\, {\rm d}y
\end{array}
$$
which says that for any $y_0\in B_r(z_0)$
$$
0=(x-y_0)\cdot (U(x)-\bar U)+\strokedint_{B_\rho(y_0)}y(U(y)-\bar U)\, {\rm d}y
$$
a.e. $x\in B_r(z_0)$.
Using again \eqref{eq:nolocal-epsilon} and Fubini -with a variable called $y_0$ instead of $y$- we see  that 
$$
0=\strokedint_{B_\rho(x_0)}(x-y_0) \cdot (U(x)-U(y_0))\, {\rm d}x,
$$
a.e. 
$y_0 \in B_r(z_0)$ and any $x_0\in B_r(z_0)$.
Therefore, invoking the last two identities we get
$$
\begin{array}{rl}
\displaystyle 
0= & \displaystyle \strokedint_{B_\rho(x_0)}\left((x-y_0)\cdot (U(x)-\bar U)+\strokedint_{B_\rho(y_0)}y(U(y)-\bar U)\, {\rm d}y \,\right)\, {\rm d}x
\\[7pt]
& \displaystyle 
=\strokedint_{B_\rho(x_0)}(x-y_0)\cdot (U(x)-U(y_0)+U(y_0)-\bar U)\, {\rm d}x+\strokedint_{B_\rho(y_0)}y(U(y)-\bar U)\,{\rm d}y 
\\[7pt]
& \displaystyle
=\strokedint_{B_\rho(x_0)}(x-y_0)\cdot (U(y_0)-\bar U)\, {\rm d}x+\strokedint_{B_\rho(y_0)}y(U(y)-\bar U)\, {\rm d}y  
\\[7pt]
& \displaystyle
=(x_0-y_0)\cdot (U(y_0)-\bar U) +\strokedint_{B_\rho(y_0)}y(U(y)-\bar U)\, {\rm d}y  ,
\end{array}
$$
for \emph{any} $x_0\in B_r(z_0)$ and a.e. $y_0\in B_r(z_0)$. 
Since $(U(y_0)-\bar U)$ and $\strokedint_{B_\rho(y_0)}y(U(y)-\bar U)\,dy$ do not depend on $x_0$ we see that
$$
U(y_0)=\bar U=\strokedint_{B_\rho(y_0)}U(y)\, {\rm d}y
$$
a.e. $y_0\in B_r(z_0)$.
This implies that $U$ has a continuous representative in  $B_r(z_0)$ (and hence in $B_{r_0}(z_0)$ since $r<r_0$ is arbitrary). Indeed, fix $\rho$ and define 
$$W(x)=\strokedint_{B_\rho(x)}U (y)\, {\rm d}y,$$ clearly $W$ is continuous and $W=U$ a.e. $x_0\in B_\rho(z_0)$. Moreover, as a byproduct of our arguments 
it turns out that each coordinate of $W$ is a harmonic function.

Assuming continuity of $U$, \eqref{eq:nolocal-epsilon} says that
\begin{equation}
\label{eq:clasica=0}
   (x-y) \cdot (U(x)-U(y))=0 
\end{equation}
for \emph{any} pair $x,y\in B_{r_0}(z_0)$ and then the Lemma can be proved  by means of well-known arguments (see for instance Prop. 1.2 in \cite{TeMir}). 
For the sake of completeness here we give a very  short proof taking advantage of the extra regularity of $U$. Indeed, 
differentiating \eqref{eq:clasica=0} w.r.t. $x$ and evaluating in $z_0$ gives
$
DU(z_0)(y-z_0)=U(y)-U(z_0),
$ 
for any $y\in B_{r_0}(z_0)$
and then
$U(y)$ agrees with a linear function
in $B_{r_0}(z_0)$. 
Moreover, 
previous identity and \eqref{eq:clasica=0} gives
$
(y-z_0)^tDU(z_0)(y-z_0)=(y-z_0)\cdot(U(y)-U(z_0))=0,
$
and hence
$
DU(z_0)=-DU(z_0)^t.
$
Calling $M=DU(z_0)$ the lemma follows. 
\end{proof}


\begin{lemma}
 \label{lema:nucleodelepsilon}
 Let $D$ be an open $\delta$ connected set, then
\begin{equation}
 \label{eq:rm-debil}
U\in RM \Leftrightarrow
\int_{D}\int_{D} 
J(x-y)|(x-y) \cdot (U(x)-U(y))|^2\,{\rm d}y{\rm d}x=0.
 \end{equation}
 \end{lemma}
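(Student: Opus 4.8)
The implication ``$\Leftarrow$'' is the substantial one; the converse is immediate. Indeed, if $U(x)=Mx+b$ with $M^{T}=-M$, then $(x-y)\cdot(U(x)-U(y))=(x-y)\cdot M(x-y)=0$ for every $x,y$ by antisymmetry of $M$, so the double integral in \eqref{eq:rm-debil} vanishes for any kernel and any set $D$. The plan for the reverse implication is to upgrade the scheme of \ref{lema:J0implica_cte} from ``$U$ is locally constant'' to ``$U$ is locally a fixed infinitesimal rigid movement'', using \ref{eq:caractRigid} in place of the elementary fact used there.

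First I would turn the vanishing of the integral into a pointwise statement: since the integrand is nonnegative and, by $(J1)$, $J(x-y)>C>0$ whenever $\|x-y\|\le 2\delta$, the hypothesis forces $(x-y)\cdot(U(x)-U(y))=0$ for a.e.\ $(x,y)\in D\times D$ with $\|x-y\|\le 2\delta$. Then, for each $z\in D$, pick $r_z\in(0,\delta]$ with $B_{r_z}(z)\subset D$ (possible since $D$ is open); on $B_{r_z}(z)$, whose diameter is $<2\delta$, the hypothesis \eqref{eq:nolocal-epsilon} of \ref{eq:caractRigid} holds, so $U$ agrees a.e.\ on $B_{r_z}(z)$ with some $R_z\in RM$. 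If two such balls overlap, their intersection is an open set of positive measure on which $R_z$ and $R_{z'}$ coincide, and since two affine maps that agree on a set of positive measure are equal, $R_z=R_{z'}$. Hence $z\mapsto R_z$ is locally constant, and on every connected component $D_\alpha$ of $D$ there is a single $R_\alpha\in RM$ with $U=R_\alpha$ a.e.\ in $D_\alpha$. (Equivalently one may run the maximal-set argument of \ref{lema:J0implica_cte}: the largest open set on which $U$ equals a fixed $R$, if not all of a component, has a boundary point $x_*$ inside that component, and applying \ref{eq:caractRigid} to a small ball $\subset D$ around $x_*$ contradicts maximality.)

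It remains to glue the rigid movements of the different components. Here a ball straddling two components of $D$ need not lie in $D$, so \ref{eq:caractRigid} cannot be used directly across a gap; instead I would exploit the mixed part of the integral. If $dist(D_\alpha,D_\beta)<\delta$ there are nonempty open sets $A\subset D_\alpha$, $B\subset D_\beta$ with $\|x-y\|<2\delta$ on $A\times B$, and integrating the nonnegative integrand only over $A\times B$ gives $(x-y)\cdot(R_\alpha(x)-R_\beta(y))=0$ a.e.\ on $A\times B$, hence — being a polynomial identity valid on an open set — for all $x,y\in\mathbb{R}^N$. Writing $R_\alpha(x)=M_\alpha x+b_\alpha$, $R_\beta(y)=M_\beta y+b_\beta$ and using $M_\alpha^{T}=-M_\alpha$, $M_\beta^{T}=-M_\beta$, this identity reduces to
\[
(x-y)\cdot(b_\alpha-b_\beta)+x^{T}(M_\alpha-M_\beta)y=0\qquad\text{for all }x,y\in\mathbb{R}^N,
\]
which forces $b_\alpha=b_\beta$ and $M_\alpha=M_\beta$, i.e.\ $R_\alpha=R_\beta$. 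Finally, $\delta$-connectedness of $D$ (Definition \ref{def:deltaconnected}) is exactly the statement that the graph on the set of components of $D$, with an edge between components at distance $<\delta$, is connected; otherwise a nontrivial partition of the vertices would write $D$ as a disjoint union of two relatively open sets at distance $\ge\delta$. Walking along a path in this graph and using the previous step repeatedly, all the $R_\alpha$ coincide with one $R\in RM$, so $U=R$ a.e.\ in $D$, that is, $U\in RM$.

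I expect the main obstacle to be precisely this cross-component matching: for connected $D$ everything follows almost verbatim from \ref{eq:caractRigid} and the method of \ref{lema:J0implica_cte}, but in the genuinely $\delta$-connected (possibly disconnected) case one cannot cover a gap by a single ball contained in $D$, and the identification of the two rigid movements must be obtained from the polynomial identity produced by the mixed integral term, as above.
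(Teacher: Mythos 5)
Your proof is correct, and for a connected $D$ it coincides with what the paper does: reduce to small balls inside $D$, invoke \ref{eq:caractRigid} to get a local rigid motion, and propagate it by a maximal-open-set (or locally constant) argument. Where you genuinely part ways from the paper is the cross-component matching. The paper's written argument, after the local step, only covers the case when $D$ is connected (the parenthetical ``($\Omega$ connected)'' in the proof gives this away), and the mechanism used in \ref{lema:J0implica_cte} for the genuinely $\delta$-connected case — a ball of radius $\delta$ straddling the gap — does not transfer, since \ref{eq:caractRigid} is a statement about a full ball contained in the domain of $U$, which a gap-straddling ball is not. You noticed exactly this obstacle. Your fix, integrating only over a product $A\times B$ with $A$, $B$ open subsets of the two nearby components, extracting the pointwise identity $(x-y)\cdot(R_\alpha(x)-R_\beta(y))=0$ from $(J1)$, and then treating it as a polynomial identity in $(x,y)$ to force $b_\alpha=b_\beta$ and $M_\alpha=M_\beta$, is clean and correct (the skew-symmetry computation reducing it to $(x-y)\cdot(b_\alpha-b_\beta)+x^{T}(M_\alpha-M_\beta)y=0$ is right). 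The final reduction to connectivity of the component graph is the correct reading of Definition \ref{def:deltaconnected}. In short, your argument is a complete proof of the stated lemma under the stated hypothesis, whereas the paper's written proof is only complete under the stronger hypothesis that $D$ is connected; the extra step you supply is precisely what is needed to cover $\delta$-connectedness.
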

 
\begin{proof} 
The implication $\Rightarrow$ is immediate. For the other implication, let us take $z_0\in \Omega$ and a ball $B_{r_0}(z_0)$
such that $r_0=\min\{\delta/2,dist(z_0,\partial \Omega) \}$, we have
$$
\int_{B_{r_0}(z_0)}\int_{B_{r_0}(z_0)} 
J(x-y)|(x-y) \cdot (U(x)-U(y))|^2\,{\rm d}y{\rm d}x=0
$$
and since 
$J>C$ for $(x,y)\in B_{r_0}(z_0)\times B_{r_0}(z_0)$, \ref{eq:caractRigid}, says  that there exists $R_0\in RM$  such that 
$R_0=U(x)$ a.e. $x\in B_{r_0}(z_0)$. Now the proof follows as in \ref{lema:J0implica_cte}, we include the details for
completeness.
Let $\mathcal{M}=\{A\subset \Omega, A\, \mbox{open}: U(x)=R_0 \,\mbox{a.e.} x\in A \}$ with the partial order given by $\subset$. Since $\mathcal{M}\neq\emptyset$ there exists a maximal open set $M\in \mathcal{M}$. If $M\subsetneq D$, since $D\setminus M$ is not open ($\Omega$ connected)  there exists $z_1\in D \setminus M$ and a ball $B_{r_1}(z_1)$,$r_1=\min\{\delta/2,dist(z_1,\partial D) \}$ such that $B_{r_1}(z_1)\cap M\neq \emptyset$ and arguing as before
$U(x)=R_1\in RM$ a.e. $x\in B_{r_1}(z_1)$, hence $R_1=R_0$ since
$B_{r_1}(z_1)\cap M$ has positive measure (is open).
Since $M$ is maximal we see that neccessarily $M=D$. 
 \end{proof}

\subsection{First model. Coupling local/nonlocal elasticity models via source terms}

Recall that we aim to study the local/nonlocal energy 
\begin{equation}
\label{def:funcional_Dirichlet.intro.Vec.55}
E_{I}(U):=
  \Xi (U) +
  \frac{1}{2}\int_{\Omega_{n\ell}}\int_{\mathbb{R}^N} 
J(x-y)|(x-y) \cdot (U(y)-U(x))|^2\,{\rm d}y{\rm d}x  
- \int_\Omega F (x) U (x) {\rm d}x,
\end{equation}
with
\begin{equation}
\label{eq:energy_linearized.intro.77}
\Xi(U):=\mu \int_{\Omega_\ell} |\mathcal{E} (U) (x) |^2 {\rm d}x + \frac{\lambda}{2}  \int_{\Omega_{\ell}} (div (U) (x))^2 {\rm d}x, 
\end{equation}
where  $
\LE(U)$ stands for 
\begin{equation}
 \LE(U)=\frac{\nabla U+\nabla U^T}{2}.
 \label{eq:defidestrain.99}
\end{equation}
and $\mu$ and $\lambda$ are two positive coefficients.

In this context, we look for minimizers in the space
$$
H_I = \Big\{ U: U|_{\Omega} \in L^2 (\Omega:\mathbb{R}^N), u|_{\Omega_\ell} \in H^1 (\Omega_\ell:\mathbb{R}^N), 
  U = 0 \mbox{ in } \mathbb{R}^N \setminus \Omega \Big\}.
$$

We will need the following substitute of \ref{lema:contagio_escalar}.

\begin{lemma}
\label{lema:contagio_vectorial}
Let $U_n:\Omega\to \RR^N$ be a sequence such that $U_n\to 0$ strongly in $L^2(\Omega_{\ell})$ and weakly in $L^2(\Omega_{n\ell})$, if in addition  
\begin{equation}
\label{eq:nolcalomega_vect}
\lim_{n\to\infty}\int_{\Omega_{n\ell}}\int_{\Omega} J(x-y)((x-y)\cdot (U_n(x)-U_n(y)))^2 {\rm d}y {\rm d}x=0 
\end{equation}
 then
 $$
 \lim_{n\to\infty}\int_{\Omega_{n\ell}}|U_n(x)|^2 {\rm d}x=0.
 $$
\end{lemma}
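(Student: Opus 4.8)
The plan is to mimic the proof of \ref{lema:contagio_escalar}: starting from the strong convergence $U_n\to 0$ in $L^2(\Omega_\ell)$, I would propagate it into $\Omega_{n\ell}$ through a finite chain of layers of thickness of order $\delta$, upgrading at each step the weak convergence on a new layer to strong convergence. The one genuinely new point with respect to the scalar case is that $J(x-y)\,|(x-y)\cdot(U_n(x)-U_n(y))|^2$ only controls the \emph{projection} of $U_n(x)-U_n(y)$ along the direction $x-y$; to recover the whole vector $U_n(x)$ I would average over $y$ over a small set of points producing a cloud of directions that spans $\RR^N$, which yields a positive definite quadratic form. This is the local, quantitative counterpart of the qualitative statement \ref{eq:caractRigid}.

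First I would restrict the integral in \eqref{eq:nolcalomega_vect} to $\Omega_{n\ell}\times\Omega_\ell$ (legitimate since the integrand is nonnegative) and expand the square as
\[
\int_{\Omega_{n\ell}}\int_{\Omega_\ell} J(x-y)\big[(x-y)\cdot U_n(x)\big]^2{\rm d}y{\rm d}x \;-\;2B_n\;+\;C_n\;\longrightarrow\;0,
\]
where $C_n=\int_{\Omega_{n\ell}}\int_{\Omega_\ell}J(x-y)[(x-y)\cdot U_n(y)]^2{\rm d}y{\rm d}x$ and $B_n$ is the cross term. Since $|x-y|$ is bounded on $\Omega$, since $U_n\to 0$ strongly in $L^2(\Omega_\ell)$, and using $(J2)$ exactly as in the scalar proof (for $B_n$, write the inner $y$-integral as $T_J(U_n\chi_{\Omega_\ell})$ and apply Cauchy--Schwarz together with the boundedness of $\{U_n\}$ in $L^2(\Omega_{n\ell})$), I would get $C_n\to 0$ and $B_n\to 0$, hence
\[
\lim_{n\to\infty}\int_{\Omega_{n\ell}}\int_{\Omega_\ell} J(x-y)\big[(x-y)\cdot U_n(x)\big]^2{\rm d}y{\rm d}x=0 .
\]

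Next I would set $A^0_\delta=\{x\in\Omega_{n\ell}:dist(x,\Omega_\ell)<\delta\}$, which by $(P1)$ and openness of $\Omega_{n\ell}$ is open, nonempty and of positive measure. For $x\in\overline{A^0_\delta}$ one has $dist(x,\Omega_\ell)\le\delta<2\delta$, so $B_{2\delta}(x)\cap\Omega_\ell$ is a nonempty open set and the symmetric matrix
\[
M(x):=\int_{B_{2\delta}(x)\cap\Omega_\ell}(x-y)\otimes(x-y)\,{\rm d}y
\]
is positive definite: for a unit vector $e$, the set $\{y\in B_{2\delta}(x)\cap\Omega_\ell:(x-y)\cdot e=0\}$ lies in a hyperplane, hence has zero measure, so $e^{T}M(x)e=\int_{B_{2\delta}(x)\cap\Omega_\ell}((x-y)\cdot e)^2{\rm d}y>0$. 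Since $x\mapsto M(x)$ is continuous, its least eigenvalue $\lambda_{\min}(M(x))$ is a positive continuous function on the compact set $\overline{A^0_\delta}$, so $\lambda_{\min}(M(x))\ge m_0>0$ there. Then, using $(J1)$, for $x\in A^0_\delta$,
\[
\int_{\Omega_\ell}J(x-y)\big[(x-y)\cdot U_n(x)\big]^2{\rm d}y\;\ge\;C\int_{B_{2\delta}(x)\cap\Omega_\ell}\big[(x-y)\cdot U_n(x)\big]^2{\rm d}y\;=\;C\,U_n(x)^{T}M(x)U_n(x)\;\ge\;Cm_0|U_n(x)|^2 ,
\]
and integrating over $A^0_\delta$ and combining with the previous limit gives $U_n\to 0$ strongly in $L^2(A^0_\delta)$.

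Finally I would iterate exactly as in the scalar case: now $U_n\to 0$ strongly in $L^2(\Omega_\ell\cup A^0_\delta)$ and weakly in $L^2(\Omega_{n\ell}\setminus\overline{A^0_\delta})$; since $\Omega_{n\ell}$ is $\delta$-connected by $(2)$, $dist(\Omega_{n\ell}\setminus\overline{A^0_\delta},A^0_\delta)<\delta$, so $A^1_\delta:=\{x\in\Omega_{n\ell}\setminus\overline{A^0_\delta}:dist(x,A^0_\delta)<\delta\}$ is nonempty and open. Restricting \eqref{eq:nolcalomega_vect} to $(\Omega_{n\ell}\setminus\overline{A^0_\delta})\times A^0_\delta$, repeating the reduction (using the strong convergence on $A^0_\delta$), and then repeating the matrix argument with the matrices $\int_{B_{2\delta}(x)\cap A^0_\delta}(x-y)\otimes(x-y)\,{\rm d}y$, yields $U_n\to 0$ strongly in $L^2(A^1_\delta)$. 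Defining inductively $A^j_\delta=\{x\in\Omega_{n\ell}\setminus\overline{\bigcup_{i<j}A^i_\delta}:dist(x,\bigcup_{i<j}A^i_\delta)<\delta\}$, the $\delta$-connectedness and boundedness of $\Omega_{n\ell}$ force $\Omega_{n\ell}=\bigcup_{0\le i<K}A^i_\delta$ for some finite $K$, whence $U_n\to 0$ strongly in $L^2(\Omega_{n\ell})$. I expect the main obstacle to be precisely the uniform positive lower bound $\lambda_{\min}(M(x))\ge m_0>0$ on each layer: this is the only place where the vectorial structure is essential, and it encodes the localized, quantitative fact that averaging the squared projections of a vector over a cloud of directions spanning $\RR^N$ controls the full vector uniformly over a compact set of base points; everything else is a direct transcription of the scalar argument.
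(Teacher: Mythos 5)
Your proof is correct, and its global architecture is exactly that of the paper's: derive
\[
\lim_{n\to\infty}\int_{\Omega_{n\ell}}\int_{\Omega_\ell} J(x-y)\big[(x-y)\cdot U_n(x)\big]^2\,{\rm d}y\,{\rm d}x=0
\]
from \eqref{eq:nolcalomega_vect} using $(J2)$ and the two convergences of $\{U_n\}$, then propagate strong $L^2$ convergence to $0$ layer by layer through $A^0_\delta, A^1_\delta,\dots$ as in \ref{lema:contagio_escalar}. Where you genuinely diverge from the paper is in the single nontrivial vectorial step: bounding $\int_{B_{2\delta}(x)\cap\Omega_\ell}\bigl[(x-y)\cdot U_n(x)\bigr]^2\,{\rm d}y$ from below by a constant times $|U_n(x)|^2$ uniformly over $\overline{A^0_\delta}$. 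The paper does this via an explicit geometric construction: it fits a ball $B_{r/2}(y_x)\subset B_{2\delta}(x)\cap\Omega_\ell$, considers the cone $C_x$ at $x$ spanned by that ball, intersects it with a cone of aperture $\tfrac{\pi}{2}-\tfrac{\alpha}{4}$ around $U_n(x)$, and uses the resulting cone of volume $\gtrsim r^N$ to bound the integral by $c\,r^{N+2}\sin^2(\alpha/4)\,\|U_n(x)\|^2$. You instead package the same quantity as $U_n(x)^T M(x) U_n(x)$ with $M(x)=\int_{B_{2\delta}(x)\cap\Omega_\ell}(x-y)\otimes(x-y)\,{\rm d}y$, prove $M(x)\succ 0$ by the observation that $\{y:(x-y)\cdot e=0\}$ is a null set, and obtain the uniform lower bound $\lambda_{\min}(M(x))\ge m_0>0$ by continuity of $x\mapsto M(x)$ and compactness of $\overline{A^0_\delta}$. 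Both are valid; your matrix/eigenvalue route is shorter, avoids the explicit cone geometry and the delicate trigonometric bookkeeping of the paper (which in fact contains a small typographical slip, $\cos^2(\pi-\alpha/4)$ where $\cos^2(\tfrac{\pi}{2}-\tfrac{\alpha}{4})$ is meant), and has the virtue of making transparent that what is being used is precisely positive definiteness of the second-moment matrix of a set of positive measure. The only trade-off is that the paper's cone argument is fully constructive and yields an explicit constant, whereas yours delivers $m_0$ non-constructively via compactness; for the purposes of the lemma this makes no difference.
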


\begin{proof}
We sketch the proof, that follows the lines of that of \ref{lema:contagio_escalar}, paying attention only to  the main differences.
From \eqref{eq:nolcalomega_vect},   the convergence of $\{u_n\}$ and property (J2),  we  find the following substitute of \eqref{eq:solonl}
\begin{equation}
 \label{eq:solonl_vect}
\int_{\Omega_{n\ell}}\int_{\Omega_{\ell}} J(x-y)|(x-y) \cdot U_n(x)|^2\, {\rm d}y{\rm d}x
\to 0.
\end{equation}
Let us define,  
 $
 A^0_\delta=\{x\in \Omega_{n\ell}: dist(x,\Omega_{\ell})<\delta \}.
 $
and arguing as in \ref{lema:contagio_escalar}, we see that
the continuous function $g(x):A^0_{\delta}\to \RR$,  $g(x)=|B_{2\delta}(x)\cap \Omega_{\ell}|$ verifies  $g(x)\ge m>0$ for any  $x\in \overline{A^0_{\delta}}$. 
 As a consequence we have
  $$
  \begin{array}{l}
  \displaystyle 
 \int_{\Omega_{n\ell}}\int_{\Omega_{\ell}} J(x-y)\left((x-y)\cdot U_n(x)\right)^2 {\rm d}y {\rm d}x  \\[7pt]
 \qquad \displaystyle \ge 
 \int_{A^0_\delta}\int_{B_{2\delta}(x)\cap \Omega_{\ell}} J(x-y)\left((x-y)\cdot U_n(x)\right)^2 {\rm d}y {\rm d}x. 
 \end{array}
 $$
 In order to proceed we need to bound by below the scalar product. Since $g(x)\ge m>0$ there exists  a fixed $r>0$ such that for any  $x\in A^0_\delta$ we can find a ball $B_r(y_x)$ centered at certain $y_x\in \Omega_{\ell}$, 
 such that $B_r(y_x)\subset B_{2\delta}(x)\cap \Omega_{\ell}$. Therefore, 
  $$
  \begin{array}{l} \displaystyle 
  \int_{A^0_\delta}\int_{B_{2\delta}(x)\cap \Omega_{\ell}} J(x-y)\left((x-y)\cdot U_n(x)\right)^2 {\rm d}y {\rm d}x 
  \\[7pt]
 \qquad \displaystyle \ge C 
 \int_{A^0_\delta}\int_{B_{r/2}(y_x)} \left((y-x)\cdot U_n(x)\right)^2 {\rm d}y {\rm d}x, 
 \end{array} $$
where we have also used $(J1)$. Since $y\in B_{r/2}(y_x)$ and $x\notin B_{r}\subset \Omega_{\ell}$, we have $\|x-y\|\ge r/2$. 
Calling  $C_x$, the smaller  cone  at $x$ containing
$B_{r/2}(y_x)$. Notice that $C_x$ has an opening angle $\alpha$ bounded by below by a constant independent of $x$, since $tg(\alpha)\ge \frac{r}{4\delta}>0$.
Let us call  $C_u$, the cone at $x$ with axis $U_n(x)$ and opening $\frac{\pi}{2}-\alpha/4$.
Clearly $C_x\cap C_u$ contains a cone $C_I$ with an opening angle $\beta\ge \alpha/2$ and therefore there exists a constant $c=c(\alpha)>0$  such that
$
|C_I\cap B_{r/2}(y_x)|\ge  cr^N.
$
On the other hand, for any element $z\in C_I\subset C_u$, $ang(z,U_n(x))\le \frac{\pi}{2} -\frac{\alpha}{4}$. Taking into account that for any $y\in B_{y_x}$, $y-x\in C_x$ we see that
  $$
 \int_{B_{r/2}(y_x)} \left((y-x)\cdot U_n(x)\right)^2 {\rm d}y \ge c r^{N+2} \|U_n(x)\|^2\cos^2\left(\pi-\frac{\alpha}{4}\right)
 $$
 since $|(y-x)\cdot U_n(x)|\ge\frac{r}{2} \|U_n(x)\|\cos\left(\pi-\frac{\alpha}{4}\right).$ Therefore,
$$
\begin{array}{l}
\displaystyle 
C 
\int_{A^0_\delta}\int_{B_{r/2}(y_x)} \left((y-x)\cdot U_n(x)\right)^2 {\rm d}y {\rm d}x \\[7pt]
\qquad \displaystyle \ge 
 Cc r^{N+2} \|U_n(x)\|^2\cos^2\left(\pi-\frac{\alpha}{4}\right) \int_{A^0_\delta}\|U_n(x)\|^2 {\rm d}x
 \end{array} $$
showing, thanks to \eqref{eq:solonl_vect}, that $U_n\to 0$ strongly in $L^2(A^0_\delta)$. Using similar arguments, the proof can be concluded following the  steps developed in \ref{lema:contagio_escalar}.
\end{proof}

Now we are ready to show the key property needed to obtain coerciveness of the functional.

\begin{lemma} \label{lema.1.99}
There exists a constant $C$ such that
$$
\Xi(U) + \frac{1}{2}\int_{\Omega_{n\ell}}\int_{\mathbb{R}^N} 
J(x-y)|(x-y) \cdot (U(y)-U(x))|^2\,{\rm d}y{\rm d}x
\geq C \int_\Omega |U|^2,
$$
for every $U$ in
$$
H_{I} = \Big\{ U \in L^2 (\Omega), U|_{\Omega_\ell} \in H^1 (\Omega_\ell), U|_{\partial \Omega \cap \partial \Omega_\ell}=0|_{\partial \Omega \cap \partial \Omega_\ell},  U =0 \mbox{ in } \mathbb{R}^N \setminus \Omega \Big\}.
$$
\end{lemma}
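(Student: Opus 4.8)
The plan is to reproduce the contradiction argument of the scalar Lemma~\ref{lema.1}, with Korn's quotient inequality \eqref{eq:korn_cociente} playing the role that ``constants are the kernel of $\nabla$'' played there, Lemma~\ref{lema:nucleodelepsilon} replacing Lemma~\ref{lema:J0implica_cte}, and Lemma~\ref{lema:contagio_vectorial} replacing Lemma~\ref{lema:contagio_escalar}. Suppose, for contradiction, that there is a sequence $U_n\in H_I$ with $\int_\Omega|U_n|^2\,{\rm d}x=1$ while the left-hand side tends to $0$; then both nonnegative summands, $\Xi(U_n)$ and the nonlocal double integral over $\Omega_{n\ell}\times\RR^N$, tend to $0$. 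Since $U_n$ is bounded in $L^2(\Omega_\ell)$ and $\|\LE(U_n)\|_{L^2(\Omega_\ell)}\to0$, \eqref{eq:korn_cociente} produces $R_n\in RM$ with $\|U_n-R_n\|_{H^1(\Omega_\ell)}\to0$; the $L^2(\Omega_\ell)$ bound makes $\{R_n\}$ bounded, and since $RM$ is finite dimensional a subsequence satisfies $R_n\to R_1\in RM$, so $U_n\to R_1$ strongly in $H^1(\Omega_\ell)$. On the nonlocal side $U_n\rightharpoonup U$ weakly in $L^2(\Omega_{n\ell})$ along a further subsequence; restricting the nonlocal term to $\Omega_{n\ell}\times\Omega_{n\ell}$, weak lower semicontinuity of that convex functional together with Lemma~\ref{lema:nucleodelepsilon} applied to the $\delta$-connected set $\Omega_{n\ell}$ give $U=R_2\in RM$ a.e.\ in $\Omega_{n\ell}$.

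The genuinely vectorial point is the coupling step, where the projection onto the direction $x-y$ has to be upgraded to full vanishing. Splitting $(x-y)\cdot(U_n(y)-U_n(x))=(x-y)\cdot(U_n(y)-R_1(y))+(x-y)\cdot(R_1(y)-U_n(x))$ for $y\in\Omega_\ell$, the first term disappears in the limit by the strong convergence $U_n\to R_1$ in $L^2(\Omega_\ell)$ and the second is controlled by weak lower semicontinuity in $U_n|_{\Omega_{n\ell}}$, so the vanishing of the $\Omega_{n\ell}\times\Omega_\ell$ part of the nonlocal term yields
$$
\int_{\Omega_{n\ell}}\int_{\Omega_\ell} J(x-y)\,\big|(x-y)\cdot(R_1(y)-R_2(x))\big|^2\,{\rm d}y\,{\rm d}x=0 .
$$
By $(P1)$ there exist $\bar x\in\Omega_{n\ell}$, $\bar y\in\Omega_\ell$ with $\|\bar x-\bar y\|<\delta$, hence, both sets being open, a product of balls $B_\rho(\bar x)\times B_\rho(\bar y)\subset\Omega_{n\ell}\times\Omega_\ell$ on which $\|x-y\|\le2\delta$; by $(J1)$ the integrand forces $(x-y)\cdot(R_1(y)-R_2(x))=0$ there. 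Being a polynomial in $(x,y)$ vanishing on an open set, it vanishes on all of $\RR^N\times\RR^N$; writing $R_i(z)=M_iz+p_i$ with $M_i^T=-M_i$ and setting $x=y+h$, the skew-symmetry kills the $h^TM_2h$ term, leaving $h^T(M_1-M_2)y+h\cdot(p_1-p_2)=0$ for all $h,y$, so $M_1=M_2$ and $p_1=p_2$, i.e.\ $R_1=R_2=:R$.

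It remains to force $R\equiv0$ using the homogeneous exterior datum and then close the loop with the contagion lemma. If $\partial\Omega\cap\partial\Omega_\ell\neq\emptyset$, the trace of $U_n$ on that Lipschitz piece of $\partial\Omega_\ell$ is $0$, so by strong $H^1(\Omega_\ell)$ convergence the trace of $R$ there is $0$, and a rigid motion whose trace vanishes on a piece of a Lipschitz hypersurface of positive $(N-1)$-measure is identically $0$ (its affine components vanish on a hyperplane, and skew-symmetry of the linear part then forces it to be zero). If instead $\partial\Omega\cap\partial\Omega_\ell=\emptyset$, then $\Omega_{n\ell}$ touches $\RR^N\setminus\Omega$, the term $\int_{\Omega_{n\ell}}\int_{\RR^N\setminus\Omega}J(x-y)|(x-y)\cdot U_n(x)|^2\,{\rm d}y\,{\rm d}x$ tends to $0$, and weak lower semicontinuity gives $(x-y)\cdot R(x)=0$ for a.e.\ $(x,y)$ in an open set $\mathcal O\times\mathcal O'$ with $\mathcal O\subset\Omega_{n\ell}$, $\mathcal O'\subset\RR^N\setminus\overline\Omega$; since for fixed $x$ the vectors $x-y$ fill an open set as $y$ runs over $\mathcal O'$, this gives $R(x)=0$ on $\mathcal O$, and a rigid motion vanishing on an open set is $0$. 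In all cases $U_n\to0$ strongly in $L^2(\Omega_\ell)$ and $U_n\rightharpoonup0$ in $L^2(\Omega_{n\ell})$, and since $\int_{\Omega_{n\ell}}\int_\Omega J(x-y)|(x-y)\cdot(U_n(x)-U_n(y))|^2\,{\rm d}y\,{\rm d}x\to0$, Lemma~\ref{lema:contagio_vectorial} yields $U_n\to0$ strongly in $L^2(\Omega_{n\ell})$; then $1=\int_\Omega|U_n|^2\,{\rm d}x\to0$, a contradiction.

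I expect the main obstacle to be exactly these two ``upgrading'' steps — matching the two rigid motions $R_1,R_2$ across the (possibly positive) gap between $\Omega_\ell$ and $\Omega_{n\ell}$, and extracting $R\equiv0$ from the exterior datum — since it is there that the projection onto $x-y$ must be turned into genuine vanishing of $U$, using openness of the sets, $(P1)$ and $(J1)$; the remaining parts are a routine transcription of the scalar argument.
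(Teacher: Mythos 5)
Your proof is correct and follows essentially the same contradiction scheme as the paper: Korn's inequality and \eqref{eq:eqiv_H1} on the local part, \ref{lema:nucleodelepsilon} on the nonlocal part, matching the two rigid motions via $(P1)$ and $(J1)$ on the coupling term, and \ref{lema:contagio_vectorial} to close. The only departures are in polish rather than substance — you identify $R_1=R_2$ by observing a polynomial vanishes on an open set (the paper does explicit linear algebra with two bases), and you spell out the two cases for $R\equiv 0$ that the paper's scalar proof (\ref{lema.1}) treats but the vectorial proof states without detail.
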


\begin{proof} Since
$$
\Xi(U)\ge \int_{\Omega_\ell} \frac{|\mathcal{E} (U)|^2}{2},
$$
we can argue by contradiction. Assume that there is a sequence $U_n\in H$ such that
$$
 \int_\Omega |U_n (x)|^2 {\rm d}x =1
$$
and 
$$
\int_{\Omega_\ell} \frac{|\mathcal{E} (U_n) (x)|^2}{2} {\rm d}x + \frac{1}{2}\int_{\Omega_{n\ell}}\int_{\mathbb{R}^N} 
J(x-y)|(x-y) \cdot (U_n(y)-U_n(x))|^2\,{\rm d}y{\rm d}x
\to 0.
$$ 
Then we have that
$$
\int_{\Omega_\ell} \frac{|\mathcal{E} (U_n) (x)|^2}{2} {\rm d}x \to 0 
$$
and 
\begin{equation}
 \label{eq:omeganl-Rn-cero}
\frac{1}{2}\int_{\Omega_{n\ell}}\int_{\mathbb{R}^N} 
J(x-y)|(x-y) \cdot (U_n(y)-U_n(x))|^2\,{\rm d}y{\rm d}x
\to 0. 
\end{equation}

Since $U_n$ is bounded in $L^2(\Omega_\ell)$ (the integral in the whole $\Omega$ is 1) and 
$$
\int_{\Omega_\ell} \frac{|\mathcal{E} (U_n) (x)|^2}{2} {\rm d}x \to 0 
$$
and thanks to \eqref{eq:eqiv_H1} we get, along a subsequence, strong 
convergence of $U_n$ in $H^1(\Omega_\ell)$ to a function $K_1$ wich in turn, thanks to \eqref{eq:korn_cociente},
belongs to the space $RM$.

Now we argue in the nonlocal part $\Omega_{n\ell}$. Since $U_n$ in bounded in $L^2 (\Omega_{n\ell})$ and 
$$ 
\begin{array}{l}
\displaystyle 
\frac{1}{2}\int_{\Omega_{n\ell}}\int_{\mathbb{R}^N} J(x-y)|(x-y) \cdot (U_n(y)-U_n(x))|^2\, {\rm d}y{\rm d}x \\[7pt]
\displaystyle =
\frac{1}{2}\int_{\Omega_{n\ell}}\int_{\Omega_{n\ell}} J(x-y)|(x-y) \cdot (U_n(y)-U_n(x))|^2\, {\rm d}y{\rm d}x \\[7pt]
\displaystyle \qquad +
\frac{1}{2}\int_{\Omega_{n\ell}}\int_{\Omega_{\ell}} J(x-y)|(x-y) \cdot (U_n(y)-U_n(x))|^2\, {\rm d}y{\rm d}x
\\[7pt]
\displaystyle \qquad +
\frac{1}{2}\int_{\Omega_{n\ell}}\int_{\mathbb{R}^N \setminus \Omega} J(x-y) |(x-y) \cdot (U_n(y)-U_n(x))|^2\, {\rm d}y{\rm d}x
\to 0.
\end{array}
$$
we have that (extracting another subsequence)
$
U_n \rightharpoonup U
$
weakly in $L^2(\Omega_{n\ell},\mathbb{R}^N)$ and therefore it does
$W_n(x,y)=U_n(y)-U_n(x)$ in 
$L^2(\Omega_{n\ell}\times\Omega_{n\ell},\mathbb{R}^N)$. Weakly lower semicontinuity for the convex  functional
$$
\frac{1}{2}\int_{\Omega_{n\ell}}\int_{\Omega_{n\ell}} J(x-y)|(x-y) \cdot W(x,y)|^2\, {\rm d}y{\rm d}x, 
$$
gives 
$$
\begin{array}{l}
\displaystyle 
\frac{1}{2}\int_{\Omega_{n\ell}}\int_{\Omega_{n\ell}} J(x-y)|(x-y) \cdot (U(y)-U(x))|^2\, {\rm d}y{\rm d}x 
 \\[7pt]
 \qquad \displaystyle \leq \lim_{n \to \infty}\frac{1}{2}\int_{\Omega_{n\ell}}\int_{\Omega_{n\ell}} J(x-y)|(x-y)\cdot(U_n(y)-U_n(x))|^2\, {\rm d}y{\rm d}x =0.
 \end{array}
$$
With a similar argument, we get
$$
\begin{array}{l}
\displaystyle 
\frac{1}{2}\int_{\Omega_{n\ell}}\int_{\Omega_{\ell}} J(x-y)|(x-y) \cdot (K_1-U(x))|^2\, {\rm d}y{\rm d}x \\[7pt]
\qquad \displaystyle \leq \lim_{n\to \infty}
\frac{1}{2}\int_{\Omega_{n\ell}}\int_{\Omega_{\ell}} J(x-y)|(x-y) \cdot (U_n(y)-U_n(x))|^2\, {\rm d}y{\rm d}x = 0,
\end{array}
$$
where we have used strong convergence of $U_n$ in $\Omega_{l}$
From the first inequality and 
\eqref{eq:rm-debil} we obtain that $U \equiv K_2 \in RM$ in $\Omega_{n\ell}$. 

The second inequality yields 
$$
\frac{1}{2}\int_{\Omega_{n\ell}}\int_{\Omega_{\ell}} J(x-y)|(x-y) \cdot (K_1(y)-K_2 (x))|^2\, {\rm d}y{\rm d}x =0
$$
and then
$
(x-y) \cdot (K_1(y)-K_2 (x)) = 0, 
$
that is,
$
(x-y) \cdot (M_1y-M_2 x + b) = 0 
$
a.e. in $D^\delta=\{ (x,y)\in \Omega_{n\ell}\times \Omega_{\ell}:\|x-y\|< \delta\}.$
 Since matrices
$M_i$, for $1\le i\le 2$, are skew symmetric $w\cdot M_iw=0$ for any $w\in \RR^n$ and hence, writting $M_1y=M_1(y-x)+M_1x$ and calling $B$ the skew symmetric matrix $B=M_1-M_2$, we have  
$
(x-y) \cdot (Bx + b) = 0 
$
a.e. $(x,y)\in D^\delta$. 
 Let us take a pair  $(x,y)\in D^\delta$, and open sets such that $B_x,B_y\subset \RR^n$ $B_x\times B_y \subset D^{\delta}$, $x\in B_x, y\in B_y$. Since $B_x$ is open we see that there exists
a set $X=\{x_1,x_2,\cdots,x_n\}\subset B_x$, of linearly independent vectors.
For each $i$ we have 
$
(x_i-y) \cdot (Bx_i + b) = 0, 
$
for all $y\in B_y$ and since $B_y$ is open we can find $n$ sets  $W_i=\{x_i-y_1^i,x_i-y_2^i,\cdots,x_i-y_n^i \}$ $1\le i\le n$, of linearly independent vectors, with $y_j^i\in B_y$ and such that, for each $i$, we have  
$
(x_i-y^i_j) \cdot (Bx_i + b) = 0$, $1\le j\le n,
$
which says that $Bx_i + b=0$ for all the elements of the basis $X$. As a consequence $b=0$ and $M_1-M_2=B=0$, in particular $K_1=K_2$. Since $U_n \in H$ we have that
$U_n =0 \mbox{ in } \mathbb{R}^N \setminus \Omega $
and then we obtain that
$
K_1 = K_2 = 0.
$

Up to now we have that
$U_n \to 0 $ strongly in $H^1(\Omega_{\ell})$ and $U_n \to 0$ weakly in $L^2(\Omega_{n\ell})$.
Then, as
$$
1= \int_\Omega |U_n (x)|^2 {\rm d}x = \int_{\Omega_{\ell}} |U_n (x)|^2 {\rm d}x + \int_{\Omega_{n\ell}} |U_n (x)|^2 {\rm d}x
$$
we get that
$$
 \int_{\Omega_{n\ell}} |U_n (x)|^2 {\rm d}x \to 1.
$$

On the other hand, \eqref{eq:omeganl-Rn-cero} implies in particular that \eqref{eq:nolcalomega_vect} holds, and therefore \ref{lema:contagio_vectorial} says that 
 $$
 \lim_{n\to\infty}\int_{\Omega_{n\ell}}|U_n(x)|^2 {\rm d}x =0,
 $$
 a contradiction.
\end{proof}
\begin{proof}[Proof of \ref{teo.elast.88}] Since  
$$
E_{I}(U) \geq c \int_{\Omega}|U (x)|^2 {\rm d}x - \int_\Omega F(x)U(x) {\rm d}x, 
$$
existence of a minimizer follows, once more,
by the direct method of calculus of variations. Therefore, for an appropriate  $U\in H_I$,
$$E_I(U) = \min_{V\in H_I} E_{I}(V),$$ 
 while uniqueness is granted thanks to the  convexity of the energy functional.

Now we choose $\varPhi$ smooth enough $\varPhi\in H_I$ and compute 
$$
E_{I}(U+t\varPhi)-E_{I}(U).
$$
For the local part we notice
$$
\Xi(U+t\varPhi)-\Xi(U)=t\left(2\mu\int_{\Omega_\ell}\mathcal{E}(U):\mathcal{E}(\varPhi)+\mu\int_{\Omega_\ell}div(U)div(\varPhi)\right)+
t^2\Xi(\varPhi),
$$
where $A:B$ stands for the scalar product $A:B=tr(A^TB)$. 
Taking into account that $A:B=0$ for any symmetric matrix $A$ and any skew-symmetric matrix $B$, we  see that
$$
\begin{array}{l}
\displaystyle
2\mu\int_{\Omega_\ell}\mathcal{E}(U):\mathcal{E}(\varPhi)+\mu\int_{\Omega_\ell}div(U)div(\varPhi)
\\[7pt]
\displaystyle =\int_{\Omega_\ell}(2\mu \mathcal{E}(U)+\lambda div(U) Id):\nabla \varPhi=\int_{\Omega_\ell}\sigma (U):\nabla \varPhi.
\end{array}
$$
On the other hand, calling
$$
\Theta(U)=\frac{1}{2}\int_{\Omega_{n\ell}}\int_{\mathbb{R}^N} 
J(x-y)|(x-y) \cdot (U(y)-U(x))|^2\,{\rm d}y{\rm d}x,
$$
we have
$$
\Theta(U+t\varPhi)=\frac{1}{2}\int_{\Omega_{n\ell}}\int_{\mathbb{R}^N} 
J(x-y)\left[(x-y) \cdot (U(x)-U(y))+t(\varPhi(x)-\varPhi(y))\right]^2\,{\rm d}y{\rm d}x,
$$
hence
$$
\begin{array}{l}
\displaystyle 
\Theta(U+t\varPhi)-\Theta(U) \\[7pt]
\displaystyle =t\int_{\Omega_{n\ell}} \!\!\int_{\mathbb{R}^N} 
J(x-y)\!\!\left[\!(x-y)\otimes (x-y)\!\right] \!\! (U(x)-U(y))^T (\varPhi(x)-\varPhi(y))\,{\rm d}y{\rm d}x+t^2\Theta(\varPhi).
\end{array}
$$
Now we call
$$
I=\int_{\Omega_{n\ell}}\int_{\mathbb{R}^N} 
J(x-y)\left[(x-y)\otimes (x-y)\right]  (U(x)-U(y))^T\cdot(\varPhi(x)-\varPhi(y))\,{\rm d}y{\rm d}x=I_1+I_2,$$
with
$$
I_1=\int_{\Omega_{n\ell}}\int_{\Omega_{n\ell}} 
J(x-y)\left[(x-y)\otimes (x-y)\right]  (U(x)-U(y))^T\cdot(\varPhi(x)-\varPhi(y))\,{\rm d}y{\rm d}x$$
and 
$$
I_2=\int_{\Omega_{n\ell}}\int_{\RR^n\setminus\Omega_{n\ell}} 
J(x-y)\left[(x-y)\otimes (x-y)\right]  (U(x)-U(y))^T\cdot(\varPhi(x)-\varPhi(y))\,{\rm d}y{\rm d}x$$
for $I_1$ we get
$$
I_1=2\int_{\Omega_{n\ell}}\left(\int_{\Omega_{n\ell}} 
J(x-y)\left[(x-y)\otimes (x-y)\right]  (U(x)-U(y))^T{\rm d}y\right)\cdot\varPhi(x)\,{\rm d}x,$$
while
\begin{equation}
\begin{split} 
I_2=&\int_{\Omega_{n\ell}}\left(\int_{\RR^n\setminus\Omega_{n\ell}} 
J(x-y)\left[(x-y)\otimes (x-y)\right]  (U(x)-U(y))^T \,{\rm d}y\right)\cdot\varPhi(x){\rm d}x\\
&-\int_{
\Omega_{n\ell}}\int_{\RR^n\setminus\Omega_{n\ell}} 
J(x-y)\left[(x-y)\otimes (x-y)\right]  (U(x)-U(y))^T\cdot\varPhi(y)\,{\rm d}y{\rm d}x\\
&=I_{21}+I_{22},\nonumber 
\end{split}
\end{equation}
since $\varPhi\equiv0$ in $\RR^n\setminus \Omega$, applying  Fubini's theorem,
$$
I_{22}=-\int_{
\Omega_{\ell}}
\left(\int_{
\Omega_{n\ell}}
J(x-y)\left[(x-y)\otimes (x-y)\right]  (U(x)-U(y))^T\,{\rm d}x\right)\cdot\varPhi(y){\rm d}y.
$$
The theorem follows.
\end{proof}

\subsection{Second model. Coupling local/nonlocal elasticity models via flux terms}
Now, we consider the local/nonlocal energy 
\begin{equation}\label{def:funcional_Dirichlet.intro.Vec_II.678}
\begin{array}{l}
\displaystyle 
E_{II}(u):=
\Xi (U) +
\frac{1}{2}\int_{\Omega_{n\ell}}\int_{\mathbb{R}^N \setminus \Omega_{\ell}} 
J(x-y)|(x-y) \cdot (U(y)-U(x))|^2\,{\rm d}y{\rm d}x \\[7pt]
\displaystyle 
\qquad \qquad + \frac{1}{2}\int_{\Omega_{n\ell}}\int_{\Gamma} 
	G(x,y)|(x-y) \cdot (U(y)-U(x))|^2\, {\rm d} \sigma(y){\rm d}x	- \int_\Omega F (x)U (x) {\rm d}x,
	\end{array}
\end{equation}
and look for minimizers of this energy in the space
$$
H_{II} = \Big\{U: U|_\Omega \in L^2 (\Omega:\mathbb{R}^N), U|_{\Omega_\ell} \in H^1 (\Omega_\ell:\mathbb{R}^N), U|_{\partial \Omega \cap \partial \Omega_\ell}=0,  
U = 0 \mbox{ in } \mathbb{R}^N \setminus \Omega \Big\}.
$$

We start by proving a modified version of \ref{lema:contagio_vectorial}.

\begin{lemma}
	\label{lema:contagio_vectorial_2}
	Let $U_n:\Omega\to \RR^n$ be a sequence such that $U_n\to 0$ strongly in $H^1(\Omega_{\ell})$ and weakly in $L^2(\Omega_{n\ell})$, if in addition  
	\begin{equation}
		\label{eq:nolcalomega_vect_2}
		\lim_{n\to\infty}\int_{\Omega_{n\ell}}\int_{\Omega_{n\ell}} J(x-y)((x-y)\cdot (U_n(x)-U_n(y)))^2 {\rm d}y {\rm d}x=0 
	\end{equation}
	and 
	\begin{equation}
		\label{eq:nonlocal_gamma_vect_2}
		\lim_{n\to\infty}\int_{\Omega_{n\ell}}\int_{\Gamma} G(x,y)((x-y)\cdot (U_n(x)-U_n(y)))^2 \, {\rm d}\sigma(y) {\rm d}x =0 
	\end{equation}
	then
	$$
	\lim_{n\to\infty}\int_{\Omega_{n\ell}}|U_n(x)|^2 {\rm d}x=0
	$$
\end{lemma}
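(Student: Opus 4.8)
The plan is to run the argument of \ref{lema:contagio_vectorial} almost verbatim; the only genuinely new point is how the strong $L^2$ convergence gets \emph{started}. In \ref{lema:contagio_vectorial} it was started on the $N$-dimensional collar $\{x\in\Omega_{n\ell}:{\rm dist}(x,\Omega_\ell)<\delta\}$ using the volumetric overlap, whereas here it has to be started through the lower-dimensional hypersurface $\Gamma$, using hypothesis \eqref{eq:nonlocal_gamma_vect_2}. Once a seed set of positive $N$-dimensional measure is produced inside $\Omega_{n\ell}$, the propagation towards all of $\Omega_{n\ell}$ is carried out with \eqref{eq:nolcalomega_vect_2} exactly as in \ref{lema:contagio_escalar}/\ref{lema:contagio_vectorial}.

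For the seeding step I would first use that $U_n\to 0$ strongly in $H^1(\Omega_\ell)$, so by the trace theorem $U_n\to 0$ strongly in $L^2(\Gamma,\RR^N)$. Expanding the square $((x-y)\cdot(U_n(x)-U_n(y)))^2$ in \eqref{eq:nonlocal_gamma_vect_2}, and using boundedness of the kernels on the relevant sets (as in the earlier lemmas), boundedness of $\Omega_{n\ell}$, the strong convergence $U_n|_\Gamma\to 0$, and the $L^2(\Omega_{n\ell})$-boundedness of $U_n$, the terms containing $U_n(y)$ integrate to zero (the cross term by Cauchy--Schwarz against the bounded term); hence \eqref{eq:nonlocal_gamma_vect_2} reduces to the $\Gamma$-analogue of \eqref{eq:solonl_vect}, namely $\int_{\Omega_{n\ell}}\int_{\Gamma}G(x,y)|(x-y)\cdot U_n(x)|^2\,{\rm d}\sigma(y){\rm d}x\to 0$. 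Next I would fix a small buffer $\rho>0$ and set $A_\rho=\{x\in\Omega_{n\ell}:\rho<{\rm dist}(x,\Gamma)<2\delta\}$, which by $(P2)$ is open and, for $\rho$ small, of positive $N$-dimensional measure (note that $\Omega_{n\ell}\cap\Gamma$ is $N$-negligible). For $x\in\overline{A_\rho}$ one has $G(x,y)>C$ on $\Gamma\cap B_{2\delta}(x)$ by $(G1)$, the chords $x-y$ have length $\ge\rho$, and, since $\Gamma$ is smooth and $x\notin\Gamma$, their directions sweep a spherical cap of opening bounded below uniformly on the compact set $\overline{A_\rho}$. Running the cone argument of \ref{lema:contagio_vectorial} then produces a constant $c>0$ with $\int_\Gamma G(x,y)|(x-y)\cdot U_n(x)|^2\,{\rm d}\sigma(y)\ge c\,\|U_n(x)\|^2$ for $x\in A_\rho$, so the previous limit forces $U_n\to 0$ strongly in $L^2(A_\rho)$; this $A_\rho$ is the seed.

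For the propagation step I would set $B_0=A_\rho$ and define inductively $B_{j+1}=\{x\in\Omega_{n\ell}\setminus\overline{\bigcup_{i=0}^{j}B_i}:{\rm dist}(x,\bigcup_{i=0}^{j}B_i)<\delta\}$. Assuming $U_n\to 0$ strongly in $L^2(\bigcup_{i=0}^{j}B_i)$, the same expansion-of-the-square trick (now killing the $U_n(y)$-terms because $y$ lies in the set of strong convergence) turns \eqref{eq:nolcalomega_vect_2} into $\int_{B_{j+1}}\int_{\bigcup_{i=0}^{j}B_i}J(x-y)|(x-y)\cdot U_n(x)|^2\,{\rm d}y{\rm d}x\to 0$; since for $x\in B_{j+1}$ a ball of fixed positive measure sits inside $B_{2\delta}(x)\cap\bigcup_{i=0}^{j}B_i$ (by continuity of $x\mapsto|B_{2\delta}(x)\cap\bigcup_{i=0}^{j}B_i|$ on the relevant compact set), $(J1)$ and the same cone estimate give $\int_{\bigcup_{i=0}^{j}B_i}J(x-y)|(x-y)\cdot U_n(x)|^2\,{\rm d}y\ge c\|U_n(x)\|^2$ on $B_{j+1}$, hence $U_n\to 0$ strongly in $L^2(B_{j+1})$. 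Because $\Omega_{n\ell}$ is $\delta$-connected and bounded, $\Omega_{n\ell}=\bigcup_{i=0}^{K}B_i$ for some finite $K$, which gives $\int_{\Omega_{n\ell}}|U_n(x)|^2\,{\rm d}x\to 0$.

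I expect the only real obstacle to be the seeding step, i.e. the geometric lower bound $\int_\Gamma G(x,y)|(x-y)\cdot U_n(x)|^2\,{\rm d}\sigma(y)\ge c\|U_n(x)\|^2$: since $\Gamma$ is lower-dimensional, one must check that the secant directions from a typical nearby point $x$ to a patch of $\Gamma$ fill a nondegenerate solid angle, \emph{uniformly} over a positive-measure set of such $x$, so that $(x-y)\cdot U_n(x)$ cannot be small for all admissible $y$ unless $\|U_n(x)\|$ is small; the smoothness of $\Gamma$, the lower bound $(G1)$, the horizon condition $(P2)$ and the buffer $\rho$ are exactly what make this work. Everything past this point is a transcription of \ref{lema:contagio_vectorial} (with \eqref{eq:rm-debil}/\ref{lema:nucleodelepsilon} not needed here, rigid motions playing no role in this propagation lemma).
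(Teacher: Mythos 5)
Your proposal follows the paper's proof quite closely: reduce \eqref{eq:nonlocal_gamma_vect_2} via the trace theorem and expansion of the square to $\int_{\Omega_{n\ell}}\int_\Gamma G(x,y)|(x-y)\cdot U_n(x)|^2\,{\rm d}\sigma(y)\,{\rm d}x\to 0$, seed strong $L^2$ convergence of $U_n$ on a positive-measure set near $\Gamma$ via a secant-cone lower bound of the form $\int_\Gamma G(x,y)|(x-y)\cdot U_n(x)|^2\,{\rm d}\sigma(y)\gtrsim\|U_n(x)\|^2$, and then propagate through $\Omega_{n\ell}$ with \eqref{eq:nolcalomega_vect_2} and $\delta$-connectedness exactly as in \ref{lema:contagio_escalar}/\ref{lema:contagio_vectorial}. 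The structure is the same; where you diverge is in how the degeneracy of the secant cone is handled in the seeding step, and this is the one place where your write-up has a real soft spot.

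You introduce a buffer $\rho>0$ and assert a \emph{uniform} lower bound on the cone opening over the compact set $\overline{A_\rho}$ with $A_\rho=\{\rho<{\rm dist}(x,\Gamma)<2\delta\}$. Two concrete issues: first, as ${\rm dist}(x,\Gamma)\uparrow 2\delta$ the patch $B_{2\delta}(x)\cap\Gamma$ shrinks to a point and the cone degenerates, so the outer cutoff must be taken strictly below $2\delta$; second and more substantively, even with a lower bound $\sigma(B_{2\delta}(x)\cap\Gamma)\ge m>0$ it is not automatic that the patch subtends a cone of opening bounded below uniformly in $x$ (a thin positive-measure strip of $\Gamma$ can subtend a nearly-degenerate solid angle), so the claimed uniform constant $c$ on $A_\rho$ is not justified as stated. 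The paper sidesteps precisely this by \emph{not} claiming uniformity: it works directly on $A^0_\delta=\{{\rm dist}(x,\Gamma)<\delta\}$, shows that the opening angle $\alpha(x)$ and hence the geometric factor $g(x)=\sigma(C_I\cap\Gamma_x)\,d^2(x,\Gamma_x)\cos^2(\tfrac{\pi}{2}-\tfrac{\alpha(x)}{4})$ are positive merely a.e.\ in $A^0_\delta$, and then restricts the seed to a positive-measure super-level set $A^0_{\delta,\varepsilon}=\{g>\varepsilon\}\cap A^0_\delta$, which is all that the propagation step needs. Replacing your uniformity claim by this a.e./super-level-set argument (or by proving the uniform bound you want after an additional quantitative use of the smoothness of $\Gamma$) closes the gap; the propagation part of your argument, including the observation that the vectorial cone estimate must be re-run at each stage, is correct and matches what the paper leaves implicit in ``following the steps developed in \ref{lema:contagio_escalar}.''
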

\begin{proof}
	 As before, we sketch the proof following the lines of \ref{lema:contagio_escalar}, paying attention only to the main differences.
	From \eqref{eq:nonlocal_gamma_vect_2},   the convergence of $\{u_n\}$ and property (P),  we  find the following substitute of \eqref{eq:solonl}
	\begin{equation}
		\label{eq:solonl_vect_2}
		\int_{\Omega_{n\ell}}\int_{\Gamma} G(x,y)|(x-y) \cdot U_n(x)|^2\, {\rm d}\sigma(y){\rm d}x
		\to 0.
	\end{equation}
	Let us define,  
	$$
	A^0_\delta=\Big\{x\in \Omega_{n\ell}: dist(x,\Gamma)<\delta \Big\}.
	$$
	and arguing as in \ref{lema:contagio_escalar}, we see that
	the continuous function $g(x):A^0_{\delta}\to \RR$,  $g(x)=\sigma(B_{2\delta}(x)\cap \Gamma)$ verifies $g(x)\ge m>0$ for any  $x\in \overline{A^0_{\delta}}$. 
	As a consequence we get
	$$
	\begin{array}{l}
	\displaystyle
	\int_{\Omega_{n\ell}}\int_{\Gamma} G(x,y)\left((x-y)\cdot U_n(x)\right)^2\, {\rm d} \sigma(y) {\rm d}x \\[7pt]
	\qquad \displaystyle 
	\ge 
	\int_{A^0_\delta}\int_{B_{2\delta}(x)\cap \Gamma} G(x,y)\left((x-y)\cdot U_n(x)\right)^2\, {\rm d} \sigma(y) {\rm d}x \\[7pt]
	\qquad \displaystyle 
	\ge 
	C\int_{A^0_\delta}\int_{B_{2\delta}(x)\cap \Gamma} \left((x-y)\cdot U_n(x)\right)^2\, {\rm d} \sigma(y) {\rm d}x,
	\end{array}
	$$
	where we have also used $(G1)$.
	
	Now, consider $C_x$ the smaller  cone  at $x$ containing
	$B_{2\delta}(x)\cap \Gamma = \Gamma_x$. We would like to see that its opening angle $\alpha = \alpha(x) >0$ almost everywhere in $A^0_\delta$. Indeed, suppose $\alpha(x) = 0$. Then $B_{2\delta}(x)\cap \Gamma$ must be contained in a line. Let $r>0$ such that $\sigma( B_{2\delta}(z) \cap \Gamma_x ) > 0$ for all $z \in B_{r}(x)$. We can easily verify that $\alpha(z)>0$ almost everywhere in $B_{r}(x)$. Since this can be done for every $x \in A^0_\delta$ with $\alpha(x)=0$, we can conclude that $\alpha(x)>0$ almost everywhere in $A^0_\delta$.    
	
	Let us call  $C_u$, the cone at $x$ with axis $U_n(x)$ and opening $\frac{\pi}{2}-\alpha(x)/4$.
	Clearly $C_x\cap C_u$ contains a cone $C_I$ with an opening angle $\beta(x) \ge \alpha(x)/2$ and therefore $\sigma(C_I\cap \Gamma_x) > 0$ a.e. in $A^0_\delta$.   
	
	On the other hand, for any element $z\in C_I\subset C_u$, $ang(z,U_n(x))\le \frac{\pi}{2} -\frac{\alpha(x)}{4}$. Taking into account that for any $y\in \Gamma_x$, $y-x\in C_x$ we see that
	$$
	\begin{array}{l}
	\displaystyle 
	\int_{\Gamma_x} \left((y-x)\cdot U_n(x)\right)^2\, {\rm d}\sigma(y)   \ge 
	\int_{C_I\cap \Gamma_x} \left((y-x)\cdot U_n(x)\right)^2 {\rm d}y
	 \\[7pt]
	\qquad \displaystyle  
	\ge \sigma(C_I\cap \Gamma_x) d^2(x,\Gamma_x)\cos^2\left(\frac{\pi}{2}-\frac{\alpha(x)}{4}\right)\|U_n(x)\|^2 
	\end{array}
	$$
	since $$|(y-x)\cdot U_n(x)|\ge d(x,\Gamma_x) \cos\left(\frac{\pi}{2}-\frac{\alpha(x)}{4}\right) \|U_n(x)\|.$$ 
	Letting 
	$$g(x):= \sigma(C_I\cap \Gamma_x) d^2(x,\Gamma_x)\cos^2\left(\frac{\pi}{2}-\frac{\alpha(x)}{4}\right),$$ 
	we have $g(x)>0$ a.e. in $A^0_{\delta}$. Then, there exist some positive constant $\varepsilon$ such that $|\{g>\varepsilon\} \cap A^0_\delta|>0$. Denoting $A^0_{\delta,\varepsilon} := \{g>\varepsilon\} \cap A^0_\delta$ we have
	$$
	C 
	\int_{A^0_{\delta,\varepsilon}}\int_{C_I\cap \Gamma_x} \left((y-x)\cdot U_n(x)\right)^2 {\rm d}\sigma(y) {\rm d}x \ge 
	\varepsilon\int_{A^0_{\delta,\varepsilon}}\|U_n(x)\|^2 {\rm d}x
	$$
	showing, thanks to \eqref{eq:solonl_vect}, that $U_n\to 0$ strongly in $L^2(A^0_{\delta,\varepsilon})$. Using similar arguments, the proof can be concluded following the  steps developed in \ref{lema:contagio_escalar}.
\end{proof}


Now we are able to prove the key inequality needed to show that $E_{II}$ is coercive. 

\begin{lemma} \label{lema.1.99_II}
	There exists a positive constant $C$ such that
	$$
	\begin{array}{l}
	\displaystyle
	\Xi(U) + \frac{1}{2}\int_{\Omega_{n\ell}}\int_{\mathbb{R}^N \setminus \Omega_{\ell}} 
	J(x-y)|(x-y) \cdot (U(y)-U(x))|^2\,{\rm d}y{\rm d}x \\[7pt]
	\qquad \qquad \displaystyle 
	+ \frac{1}{2}\int_{\Omega_{n\ell}}\int_{\Gamma} 
	G(x,y)|(x-y) \cdot (U(y)-U(x))|^2\,{{\rm d}} \sigma(y){\rm d}x
	\geq C \int_\Omega |U (x)|^2 {\rm d}x,
	\end{array}
	$$
	for every $U$ in
	$$
	H_{II} = \Big\{ U \in L^2 (\Omega), U|_{\Omega_\ell} \in H^1 (\Omega_\ell), U|_{\partial \Omega \cap \partial \Omega_\ell}=0|_{\partial \Omega \cap \partial \Omega_\ell},  U =0 \mbox{ in } \mathbb{R}^N \setminus \Omega \Big\}.
	$$
\end{lemma}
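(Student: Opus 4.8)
The plan is to argue by contradiction, following closely the proof of \ref{lema.1.99} (the coerciveness estimate for the first vectorial model), and inserting two modifications: the surface coupling term will be handled through the trace theorem on $\Gamma$, and the final propagation of strong $L^2$ convergence will be done with \ref{lema:contagio_vectorial_2} instead of \ref{lema:contagio_vectorial}. Suppose the inequality fails; then there is a sequence $U_n\in H_{II}$ with $\int_\Omega |U_n|^2\,{\rm d}x=1$ while the left-hand side tends to $0$. Since $\Xi(U)\ge\frac12\int_{\Omega_\ell}|\mathcal{E}(U)|^2$ and every term on the left is nonnegative, we get separately $\int_{\Omega_\ell}|\mathcal{E}(U_n)|^2\to 0$, $\int_{\Omega_{n\ell}}\int_{\mathbb{R}^N\setminus\Omega_\ell}J(x-y)|(x-y)\cdot(U_n(y)-U_n(x))|^2\to 0$ and $\int_{\Omega_{n\ell}}\int_{\Gamma}G(x,y)|(x-y)\cdot(U_n(y)-U_n(x))|^2\,{\rm d}\sigma(y){\rm d}x\to 0$. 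Because $\{U_n\}$ is bounded in $L^2(\Omega_\ell)$ and $\mathcal{E}(U_n)\to 0$ in $L^2$, Korn's equivalence \eqref{eq:eqiv_H1} yields, along a subsequence, strong convergence $U_n\to K_1$ in $H^1(\Omega_\ell)$, and \eqref{eq:korn_cociente} forces $K_1\in RM$. By the trace theorem $U_n\to K_1$ strongly in $L^2(\Gamma)$. Extracting a further subsequence, $U_n\rightharpoonup U$ weakly in $L^2(\Omega_{n\ell})$; bounding the double integral over $\Omega_{n\ell}\times\Omega_{n\ell}$ by the one over $\Omega_{n\ell}\times(\mathbb{R}^N\setminus\Omega_\ell)$ and invoking weak lower semicontinuity of the convex functional $W\mapsto\int\int J(x-y)|(x-y)\cdot W|^2$ applied to $W_n(x,y)=U_n(y)-U_n(x)$, we obtain $\int_{\Omega_{n\ell}}\int_{\Omega_{n\ell}}J(x-y)|(x-y)\cdot(U(y)-U(x))|^2=0$, so by \ref{lema:nucleodelepsilon} ($\Omega_{n\ell}$ being $\delta$-connected) we have $U\equiv K_2\in RM$ in $\Omega_{n\ell}$.

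Next I would pass to the limit in the coupling term. On $\Omega_{n\ell}\times\Gamma$ the map $(x,y)\mapsto(x-y)\cdot U_n(y)$ converges strongly in $L^2$ (using $U_n\to K_1$ in $L^2(\Gamma)$ and boundedness of $|x-y|$), while $(x,y)\mapsto(x-y)\cdot U_n(x)$ converges weakly in $L^2(\Omega_{n\ell}\times\Gamma)$; hence $(x-y)\cdot(U_n(y)-U_n(x))\rightharpoonup(x-y)\cdot(K_1(y)-K_2(x))$ weakly, and weak lower semicontinuity of $v\mapsto\int\int G\,v^2$ gives $\int_{\Omega_{n\ell}}\int_{\Gamma}G(x,y)|(x-y)\cdot(K_1(y)-K_2(x))|^2\,{\rm d}\sigma(y){\rm d}x=0$. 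Thus $(x-y)\cdot(K_1(y)-K_2(x))=0$ for a.e. $(x,y)$ with $G(x,y)>0$, and by $(G1)$ and $(P2)$ this forces a set of positive measure containing a product $U_0\times V$ with $U_0\subset\Omega_{n\ell}$ open and $V\subset\Gamma$ relatively open, on which the identity holds. Writing $K_i(x)=M_ix+b_i$ with $M_i^T=-M_i$ and using $w\cdot M_1w=0$ exactly as in the proof of \ref{lema.1.99}, the relation reduces to $(x-y)\cdot(Bx+b)=0$ with $B=M_1-M_2$ skew and $b=b_1-b_2$; expanding in $x$ this reads $x\cdot(b+By)-y\cdot b=0$, so varying $x$ over the open set $U_0\subset\mathbb{R}^N$ gives $b+By=0$ and $y\cdot b=0$ for a.e. $y\in V$. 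Since $V$ is a relatively open subset of a smooth hypersurface, the differences $\{y'-y'':y',y''\in V\}$ span a subspace of dimension at least $N-1$, hence $B$ annihilates an $(N-1)$-dimensional subspace $T$; but then for $w$ with $w\perp T$ we have $v\cdot Bw=-w\cdot Bv=0$ for all $v\in T$, so $Bw\in\mathrm{span}(w)$, and $w\cdot Bw=0$ forces $Bw=0$, whence $B=0$, $b=0$, and $K_1=K_2=:K$.

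It then remains to identify $K=0$, splitting into two cases as in the scalar proof of \ref{teo.1.44}. If $\partial\Omega\cap\partial\Omega_\ell\neq\emptyset$, the membership condition $U_n|_{\partial\Omega\cap\partial\Omega_\ell}=0$ together with $U_n\to K$ in $H^1(\Omega_\ell)$ gives $K=0$ on an open piece of a hypersurface, and a rigid motion vanishing there vanishes identically by the same skew-matrix argument. If $\partial\Omega\cap\partial\Omega_\ell=\emptyset$, then $\Omega_\ell\Subset\Omega$ so $\Omega_{n\ell}$ touches $\mathbb{R}^N\setminus\Omega$, and the part of the energy over $\Omega_{n\ell}\times(\mathbb{R}^N\setminus\Omega)$ (where $U_n\equiv 0$) yields, by weak lower semicontinuity, $\int_{\Omega_{n\ell}}\int_{\mathbb{R}^N\setminus\Omega}J(x-y)|(x-y)\cdot K_2(x)|^2=0$; arguing as above but now with $y$ ranging over an open subset of $\mathbb{R}^N\setminus\overline\Omega$ we get $K_2\equiv 0$ on an open subset of $\Omega_{n\ell}$, hence $K_2=K_1=0$. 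Finally, we have $U_n\to 0$ strongly in $H^1(\Omega_\ell)$ and weakly in $L^2(\Omega_{n\ell})$, and hypotheses \eqref{eq:nolcalomega_vect_2}, \eqref{eq:nonlocal_gamma_vect_2} hold since the corresponding integrals are dominated by terms that vanish in the limit; so \ref{lema:contagio_vectorial_2} gives $\int_{\Omega_{n\ell}}|U_n|^2\to 0$, and then $1=\int_{\Omega_\ell}|U_n|^2+\int_{\Omega_{n\ell}}|U_n|^2\to 0$, a contradiction.

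I expect the main obstacle to be the step deducing $K_1=K_2$ from an identity supported on $\Omega_{n\ell}\times\Gamma$, i.e. with the second variable confined to a set of dimension $N-1$: the crucial observation that saves the argument is that a skew-symmetric matrix vanishing on the $(N-1)$-dimensional span of tangential directions of $\Gamma$ must vanish completely, so no information is lost by localizing one variable to the hypersurface. One must also take care, via $(G1)$ and $(P2)$, that the interaction set $\{(x,y)\in\Omega_{n\ell}\times\Gamma:G(x,y)>0\}$ genuinely contains a product of open sets, and, as in \ref{lema:contagio_vectorial_2}, that the cone-type lower bound used there applies a.e. on $\Gamma$ even when $\Gamma$ is locally flat.
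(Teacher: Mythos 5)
Your proof is correct and follows essentially the same route as the paper: the same contradiction setup, the same use of Korn's inequality and \ref{lema:nucleodelepsilon} to identify $K_1\in RM$ on $\Omega_\ell$ and $K_2\in RM$ on $\Omega_{n\ell}$, weak lower semicontinuity to pass to the limit in the $\Omega_{n\ell}\times\Gamma$ coupling term, reduction to $(x-y)\cdot(Bx+b)=0$ with $B$ skew, and then \ref{lema:contagio_vectorial_2} to propagate strong $L^2$ convergence into $\Omega_{n\ell}$ and reach the contradiction. The only divergence is the elementary linear-algebra step that yields $B=0,\ b=0$: you first vary $x$ over an open set to get $b+By=0$ for a.e.\ $y$ on a relatively open piece of $\Gamma$ and then observe that a skew matrix annihilating the $(N-1)$-dimensional span of tangential differences of $\Gamma$ must vanish (equivalently, a nonzero skew matrix has rank at least $2$), while the paper instead chooses $N$ affinely independent points $y_j\in\Gamma$ and $N$ linearly independent $x_i$ so that each $Bx_i+b$ is annihilated by a basis, forcing $Bx_i+b=0$ — both are short and correct; your explicit two-case treatment for deducing $K_1=K_2=0$ (depending on whether $\partial\Omega\cap\partial\Omega_\ell$ is empty) helpfully spells out a step the paper states tersely.
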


\begin{proof} As before, we use that
	$$
	\Xi(U)\ge \int_{\Omega_\ell} \frac{|\mathcal{E} (U) (x)|^2}{2} {\rm d}x,
	$$
	and argue by contradiction. Assume that there is a sequence $U_n\in H$ such that
	$$
	\int_\Omega |U_n (x)|^2 {\rm d}x =1
	$$
	and 
	$$
	\int_{\Omega_\ell} \frac{|\mathcal{E} (U_n)|^2}{2} + \frac{1}{2}\int_{\Omega_{n\ell}}\int_{\mathbb{R}^N \setminus \Omega_{\ell}} 
	J(x-y)|(x-y) \cdot (U(y)-U(x))|^2\,{\rm d}y {\rm d}x
	$$
	$$
	+ \frac{1}{2}\int_{\Omega_{n\ell}}\int_{\Gamma} 
	G(x-y)|(x-y) \cdot (U(y)-U(x))|^2\, {{\rm d}} \sigma (y){\rm d}x
	\to 0.
	$$ 
	Then, we have that
	$$
	\int_{\Omega_\ell} \frac{|\mathcal{E} (U_n) (x)|^2}{2} {\rm d}x \to 0, 
	$$ 
	\begin{equation}
		\label{eq:omeganl-Rn-cero_II}
		\int_{\Omega_{n\ell}}\int_{\mathbb{R}^N \setminus \Omega_{\ell}} 
		J(x-y)|(x-y) \cdot (U(y)-U(x))|^2\,{\rm d} y {\rm d}x \to 0,
	\end{equation}
	and
	\begin{equation}
		\label{eq:omeganl-gamma-cero_II}
		\int_{\Omega_{n\ell}}\int_{\Gamma} 
		G(x-y)|(x-y) \cdot (U(y)-U(x))|^2\,{{\rm d}}\sigma(y){\rm d}x\to 0.
	\end{equation}

	Since $U_n$ is bounded in $L^2(\Omega_\ell)$ (the integral in the whole $\Omega$ is 1) and 
	$$
	\int_{\Omega_\ell} \frac{|\mathcal{E} (U_n) (x)|^2}{2} {\rm d}x \to 0 
	$$
	and thanks to \eqref{eq:eqiv_H1} we get, along a subsequence, strong 
	convergence of $U_n$ in $H^1(\Omega_\ell)$ to a function $K_1$ wich in turn, thanks to \eqref{eq:korn_cociente},
	belongs to the space $RM$.

	Now we argue in the nonlocal part $\Omega_{n\ell}$. Since $U_n$ in bounded in $L^2 (\Omega_{n\ell})$ and 
	$$ 
	\begin{array}{l}
		\displaystyle 
		\frac{1}{2}\int_{\Omega_{n\ell}}\int_{\mathbb{R}^N \setminus \Omega_{\ell}} J(x-y)|(x-y) \cdot (U_n(y)-U_n(x))|^2\, {\rm d}y{\rm d}x \\[7pt]
		\displaystyle =
		\frac{1}{2}\int_{\Omega_{n\ell}}\int_{\Omega_{n\ell}} J(x-y)|(x-y) \cdot (U_n(y)-U_n(x))|^2\, {\rm d}y{\rm d}x \\[7pt]
		\displaystyle
		\displaystyle \qquad +
		\frac{1}{2}\int_{\Omega_{n\ell}}\int_{\mathbb{R}^N \setminus \Omega} J(x-y) |(x-y) \cdot (U_n(y)-U_n(x))|^2\, {\rm d}y{\rm d}x
		\to 0
	\end{array}
	$$
	we have that (extracting another subsequence)
	$
	U_n \rightharpoonup U
	$
	weakly in $L^2(\Omega_{n\ell},\mathbb{R}^N)$ and therefore it also converges
	$W_n(x,y)=U_n(y)-U_n(x)$ in 
	$L^2(\Omega_{n\ell}\times\Omega_{n\ell},\mathbb{R}^N)$. Weakly lower semicontinuity (due to convexity) of the functional
	$$
	\frac{1}{2}\int_{\Omega_{n\ell}}\int_{\Omega_{n\ell}} J(x-y)|(x-y) \cdot W(x,y)|^2\, {\rm d}y{\rm d}x, 
	$$
	gives 
	$$
	\begin{array}{l}
	\displaystyle 
	\frac{1}{2}\int_{\Omega_{n\ell}}\int_{\Omega_{n\ell}} J(x-y)|(x-y) \cdot (U(y)-U(x))|^2\, {\rm d}y{\rm d}x 
	\\[7pt]
	\qquad \displaystyle 
	\leq \lim_{n \to \infty}\frac{1}{2}\int_{\Omega_{n\ell}}\int_{\Omega_{n\ell}} J(x-y)|(x-y)\cdot(U_n(y)-U_n(x))|^2\, {\rm d}y{\rm d}x =0.
	\end{array}
	$$
	With a similar argument and using \eqref{eq:omeganl-gamma-cero_II} we obtain
	$$
	\begin{array}{l}
	\displaystyle 
	\frac{1}{2}\int_{\Omega_{n\ell}}\int_{\Gamma} G(x,y)|(x-y) \cdot (K_1-U(x))|^2\, {{\rm d}}\sigma(y){\rm d}x  \\[7pt]
	\qquad \displaystyle \leq \lim_{n\to \infty}
	\frac{1}{2}\int_{\Omega_{n\ell}}\int_{\Gamma} G(x,y)|(x-y) \cdot (U_n(y)-U_n(x))|^2\, {{\rm d}}\sigma(y){\rm d}x = 0,
	\end{array}
	$$
	where we have used strong convergence of $U_n$ in $L^2(\Gamma$), due to the strong convergence of $U_n$ in $H^1(\Omega_{\ell})$.
	From the first inequality and 
	\eqref{eq:rm-debil} we obtain that $U \equiv K_2 \in RM$ in $\Omega_{n\ell}$. 
	Now, the second inequality yields 
	$$
	\frac{1}{2}\int_{\Omega_{n\ell}}\int_{\Gamma} J(x-y)|(x-y) \cdot (K_1(y)-K_2 (x))|^2\, {{\rm d}} \sigma(y){\rm d}x =0,
	$$
	and hence
	$
	(x-y) \cdot (K_1(y)-K_2 (x)) = 0,
	$
	that is,
	$
	(x-y) \cdot (M_1y-M_2 x + b) = 0 
	$
	a.e. in $D^\delta=\{ (x,y)\in \Omega_{n\ell}\times \Gamma :\|x-y\|< \delta\}.$
	Since the matrices
	$M_i$, for $1\le i\le 2$, are skew symmetric we have $w\cdot M_iw=0$ for any $w\in \RR^n$ and hence, writing $M_1y=M_1(y-x)+M_1x$ and calling $B$ the skew symmetric matrix $B=M_1-M_2$, we have  
	$
	(x-y) \cdot (Bx + b) = 0 
	$
	a.e. $(x,y)\in D^\delta$. 
	Let us take a pair  $(x,y)\in D^\delta$, and open sets such that $B_x,B_y\subset \RR^n$ $B_x\times B_y\cap\Gamma \subset D^{\delta}$, $x\in B_x, y\in B_y \cap \Gamma$ with $\sigma(B_y \cap \Gamma)>0$. Since $\sigma(B_y \cap \Gamma)>0$ there exist a set $Y = \{y_1,...,y_N\} \subset B_y \cap \Gamma$ with $y_i \not = y_j$, $1 \leq i , j \leq N$. Now, since $B_x$ is an open set, we can choose a set of linearly independent vectors $X = \{x_1,...,x_N\} \in B_x$ in such a way that $W_i = \{x_i-y_1,x_i - y_2,...,x_i-y_N\}$, $y_j \in Y$, is also a linearly independent set for every $1\leq i \leq N$.    
	and such that, for each $i$, we have  
	$
	(x_i-y_j) \cdot (Bx_i + b) = 0$, $1\le j\le N.
	$
	which says that $Bx_i + b=0$ for all the elements of the basis $X$. As a consequence $b=0$ and $M_1-M_2=B=0$, in particular $K_1=K_2$. Since $U_n \in H$ we have that
	$U_n =0 \mbox{ in } \mathbb{R}^N \setminus \Omega 
	$
	and then we obtain that
	$
	K_1 = K_2 = 0.
	$
	
	Up to now we have that
	$U_n \to 0 $ strongly in $H^1(\Omega_{\ell})$ and $L^2(\Gamma)$, and $U_n \to 0$ weakly in $L^2(\Omega_{n\ell})$.
	Then, as
	$$
	1= \int_\Omega |U_n(x)|^2 {\rm d}x= \int_{\Omega_{\ell}} |U_n (x)|^2 {\rm d}x + \int_{\Omega_{n\ell}} |U_n (x)|^2 {\rm d}x
	$$
	we get that
	$$
	\int_{\Omega_{n\ell}} |U_n (x)|^2 {\rm d}x \to 1.
	$$
	
	On the other hand,  \eqref{eq:omeganl-Rn-cero_II} and \eqref{eq:omeganl-gamma-cero_II} say in particular that \eqref{eq:nolcalomega_vect_2} and \eqref{eq:nonlocal_gamma_vect_2} hold, and therefore \ref{lema:contagio_vectorial_2} says that 
	$$
	\lim_{n\to\infty}\int_{\Omega_{n\ell}}|U_n(x)|^2 {\rm d}x =0,
	$$
	a contradiction.
\end{proof}
\begin{proof}[{Proof of \ref{teo.elast.88-999}}]
Existence of a unique minimizer follows as in the first model case. Finally, we derive a local-nonlocal equation as before.  
Choose $\varPhi$ smooth enough $\varPhi\in H_{II}$ and compute 
$$
E_{II}(U+t\varPhi)-E_{II}(U).
$$
For the local part we notice
$$
\Xi(U+t\varPhi)-\Xi(U)=t\left(2\mu\int_{\Omega_\ell}\mathcal{E}(U):\mathcal{E}(\varPhi)+\mu\int_{\Omega_\ell}div(U)div(\varPhi)\right)+
t^2\Xi(\varPhi),
$$
where $A:B$ stands for the scalar product $A:B=tr(A^TB)$. 
Taking into account that $A:B=0$ for any symmetric matrix $A$ and any skew-symmetric matrix $B$, we  see that
$$
\begin{array}{l}
\displaystyle 
2\mu\int_{\Omega_\ell}\mathcal{E}(U):\mathcal{E}(\varPhi)+\mu\int_{\Omega_\ell}div(U)div(\varPhi)
\\[7pt] \displaystyle =\int_{\Omega_\ell}(2\mu \mathcal{E}(U)+\lambda div(U) Id):\nabla \varPhi=\int_{\Omega_\ell}\sigma (U):\nabla \varPhi.
\end{array}
$$
On the other hand, calling

$$
\Theta(U)=\frac{1}{2}\int_{\Omega_{n\ell}}\int_{\mathbb{R}^N \setminus \Omega_{\ell}} 
J(x-y)|(x-y) \cdot (U(y)-U(x))|^2\,{\rm d}y{\rm d}x,
$$
we have
$$
\Theta(U+t\varPhi)=\frac{1}{2}\int_{\Omega_{n\ell}}\int_{\mathbb{R}^N \setminus \Omega_{\ell}} 
J(x-y)\left[(x-y) \cdot (U(x)-U(y))+t(\varPhi(x)-\varPhi(y))\right]^2\,{\rm d}y{\rm d}x,
$$
hence
$$
\begin{array}{l}
\displaystyle 
\Theta(U+t\varPhi)-\Theta(U) \\[7pt]
\displaystyle =t\int_{\Omega_{n\ell}} \!\!\int_{\mathbb{R}^N \setminus \Omega_{\ell}} 
\!\!\!\!J(x-y)\!\! \left[\!(x-y)\otimes (x-y)\!\right] \!\! (\!U(x)-U(y)\!)^T\!(\!\varPhi(x)-\varPhi(y)\!){\rm d}y{\rm d}x+t^2\Theta(\varPhi).
\end{array}
$$
Now we call
$$
I=\int_{\Omega_{n\ell}}\!\!\int_{\mathbb{R}^N\setminus \Omega_{\ell}} 
\!\!\!\!J(x-y)\!\!\left[\!(x-y)\otimes (x-y) \!\right] \!\! (\!U(x)-U(y)\!)^T\!\!(\varPhi(x)-\varPhi(y))\,{\rm d}y{\rm d}x=I_1+I_2,$$
with
$$
I_1=\int_{\Omega_{n\ell}}\int_{\Omega_{n\ell}} J(x-y)\left[(x-y)\otimes (x-y)\right]  (U(x)-U(y))^T\cdot(\varPhi(x)-\varPhi(y))\,{\rm d}y{\rm d}x$$
and 
$$
I_2=\int_{\Omega_{n\ell}}\int_{\RR^n\setminus\Omega} 
J(x-y)\left[(x-y)\otimes (x-y)\right]  (U(x)-U(y))^T\cdot(\varPhi(x)-\varPhi(y))\,{\rm d}y{\rm d}x$$
for $I_1$ we get
$$
I_1=2\int_{\Omega_{n\ell}}\left(\int_{\Omega_{n\ell}} 
J(x-y)\left[(x-y)\otimes (x-y)\right]  (U(x)-U(y))^T{\rm d}y\right)\cdot\varPhi(x)\,{\rm d}x,$$
while
\begin{equation}
\begin{split} 
I_2=&\int_{\Omega_{n\ell}}\left(\int_{\RR^n\setminus\Omega} 
J(x-y)\left[(x-y)\otimes (x-y)\right]  (U(x)-U(y))^T \,{\rm d}y\right)\cdot\varPhi(x){\rm d}x\\
&-\int_{
\Omega_{n\ell}}\int_{\RR^n\setminus\Omega} 
J(x-y)\left[(x-y)\otimes (x-y)\right]  (U(x)-U(y))^T\cdot\varPhi(y)\,{\rm d}y{\rm d}x\\
&=I_{21}+I_{22},\nonumber 
\end{split}
\end{equation}
since $\varPhi\equiv0$ in $\RR^n\setminus \Omega$, applying we have $I_{22} = 0$.
Finally, we have the term 
$$
\Theta_{\Gamma}(U) = \frac{1}{2}\int_{\Omega_{n\ell}}\int_{\Gamma} 
G(x,y)|(x-y) \cdot (U(y)-U(x))|^2\,{\rm d}\sigma(y){\rm d}x, 
$$
and, arguing as before, we obtain
$$
\begin{array}{l}
\displaystyle 
\Theta_{\Gamma}(U+t\varPhi)-\Theta_{\Gamma}(U) \\[7pt]
\displaystyle =t\int_{\Omega_{n\ell}} \!\! \int_{\Gamma} 
\!\! G(x,y)\!\left[\!(x-y)\otimes (x-y)\!\right] \! (\!U(x)-U(y)\!)^T\!\!(\!\varPhi(x)-\varPhi(y)\!){\rm d}\sigma(y){\rm d}x+t^2\Theta_{\Gamma}(\varPhi).
\end{array}
$$
Proceeding as we did for the second scalar model, we have 
$$
\int_{\Omega_{n\ell}}\int_{\Gamma} 
G(x,y)\left[(x-y)\otimes (x-y)\right]  (U(x)-U(y))^T\cdot(\varPhi(x)-\varPhi(y))\,{\rm d}\sigma(y){\rm d}x
$$
$$
= \int_{\Omega_{n\ell}}\int_{\Gamma} 
G(x,y)\left[(x-y)\otimes (x-y)\right]  (U(x)-U(y))^T\,{\rm d}\sigma(y)\cdot\varPhi(x){\rm d}x
$$
$$
+ \int_{\Gamma} \int_{\Omega_{n\ell}}
G(y,x)\left[(x-y)\otimes (x-y)\right]  (U(x)-U(y))^T{\rm d}y\cdot\varPhi(x)\,{\rm d}\sigma(x)
$$
and the lemma follows.
\end{proof}

\section{Possible extensions of our results} 
\label{sect-comments}

Finally, let us comment briefly on
possible extensions of our results.

\subsection{Inhomogeneous equations} Pure local or nonlocal models are well suited for homogeneous environments. 
When one deals with an inhomogeneous 
diffusion process one possibility is to add a diffusion coefficient and consider operators like
\begin{equation} \label{op.1}
 \mbox{div} \Big( a(x)\nabla u \Big) (x),
 \qquad \mbox{or} \qquad 
\int_{\mathbb{R}^N} b(x,y)J(x-y)(u(y)-u(x))
\, {\rm d}y.
\end{equation}
To obtain inhomogeneous equations one is lead to consider terms like
$$
\int_{\Omega_\ell} \frac{a(x) |\nabla u(x)|^2}{2} dx,
\qquad
\frac{1}{2}\int_{\Omega_{n\ell}}\int_{\Omega_{n\ell}} b(x,y) J(x-y)(u(y)-u(x))^2\, {\rm d}y{\rm d}x
$$
and 
$$
\frac{1}{2}\int_{\Omega_{n\ell}}\int_{\Omega_{\ell}} c(x,y) J(x-y)(u(y)-u(x))^2\, {\rm d}y{\rm d}x
$$
in our energies.

As long as the coefficients $a(x)$, $b(x,y)$ and $c(x,y)$ are positive and bounded (both from adobe and from below away from zero) the same computations
that we made here work and we have existence and uniqueness of a minimizer that verifies an equation in which
the operators given by \eqref{op.1} appear.

\subsection{Singular kernels} With the same ideas used here we can deal with 
singular kernels. For example, one can show that there is a unique minimizer of
\begin{equation}\label{def:funcional_Dirichlet.intro.singular}
E_i(u):=\int_{\Omega_\ell} \frac{|\nabla u(x)|^2}{2} {\rm d}x+ \frac{1}{2}\int_{\Omega_{n\ell}}\int_{\mathbb{R}^N} \frac{C}{|x-y|^{N+2s}}
(u(y)-u(x))^2\,{\rm d}y{\rm d}x
- \int_\Omega f (x) u (x) {\rm d}x
\end{equation}
in
$$
H = \Big\{ u \in H^s (\mathbb{R}^N), u|_{\Omega_\ell} \in H^1 (\Omega_\ell), 
u = 0 \mbox{ in } \mathbb{R}^N \setminus \Omega \Big\}.
$$
In this case in the nonlocal region a fractional Laplacian 
$$
\Delta^s u(x) = \int_{\mathbb{R}^N} \frac{C}{|x-y|^{N+2s}}
(u(y)-u(x))\,{\rm d}y 
$$
appears. 
Concerning mixed couplings we can also consider an energy of the form 
\begin{equation}\label{def:funcional_dirichlet.intro.2257.99}
\begin{array}{l}
\displaystyle
E_{ii}(u):=\int_{\Omega_\ell} \frac{|\nabla u (x)|^2}{2} {\rm d}x 
+ \frac{1}{2}\int_{\Omega_{n\ell}}\int_{\mathbb{R}^N\setminus \Omega_{\ell} }
\frac{C}{|x-y|^{N+2s}} (u(y)-u(x))^2\, {\rm d}y{\rm d}x \\[7pt]
\qquad \qquad \quad \displaystyle 
+ \frac{1}{2}\int_{\Omega_{n\ell}}\int_{\Gamma}G(x,z)\left(u(x)-u(z)\right)^2 {\rm d}\sigma(z) dx
- \int_\Omega f (x) u(x) {\rm d}x
\end{array}
\end{equation}
in the same $H$ as before.

Now, since in fractional Sobolev spaces we have a trace theorem, 
we can go one step further and consider couplings between the local and the nonlocal parts
in sets of smaller dimension (both for the local and the nonlocal parts). 
Fix two subsets 
$
\Gamma_\ell \subset  \partial \Omega_\ell$,  and $ \Gamma_{n\ell} \subset  \partial \Omega_{n\ell}
$ 
and consider an energy of the form 
\begin{equation}\label{def:funcional_dirichlet.intro.2257.99.44}
\begin{array}{l}
\displaystyle
\widetilde{E}_{ii}(u):=\int_{\Omega_\ell} \frac{|\nabla u (x)|^2}{2} {\rm d}x 
+ \frac{1}{2}\int_{\Omega_{n\ell}}\int_{\mathbb{R}^N\setminus \Omega_{\ell} }
\frac{C}{|x-y|^{N+2s}} (u(y)-u(x))^2\, {\rm d}y{\rm d}x \\[7pt]
\qquad \qquad \quad \displaystyle 
+ \frac{1}{2}\int_{\Gamma_{n\ell}}\int_{\Gamma_\ell}G(x,z)\left(u(x)-u(z)\right)^2 {\rm d}\sigma(z) {\rm d} \sigma(x)
- \int_\Omega f (x) u(x) {\rm d}x.
\end{array}
\end{equation}
For $s>\frac12$ we have compactness of the Sobolev trace embedding from $H^1(\Omega_\ell )$ into $L^2(\Gamma_\ell)$ and
from  $H^s(\Omega_{n\ell} )$ into $L^2(\Gamma_{n\ell})$. Therefore, we have a well defined energy functional in  
$$
H = \Big\{ u \in H^s (\mathbb{R}^N), u|_{\Omega_\ell} \in H^1 (\Omega_\ell), 
  u = 0 \mbox{ in } \mathbb{R}^N \setminus \Omega \Big\}.
$$

\subsection{Nonlinear problems} We can also tackle nonlinear problems associated to functionals like
\begin{equation}\label{def:funcional_neumann.intro.p.r.66}
E^{p,r}(u):=\int_{\Omega_\ell} \frac{|\nabla u (x)|^p}{p} {\rm d}x + \frac{1}{r}\int_{\Omega_{n\ell}}\int_{\Omega} J(x-y)(u(y)-u(x))^r\,{\rm d}y{\rm d}x
- \int_\Omega f(x) u(x) {\rm d}x.
\end{equation}
In this case the natural space to look for minimizers is 
in
$$
H = \Big\{ u \in L^r (\Omega), u|_{\Omega_\ell} \in W^{1,p} (\Omega_\ell), 
  u = 0 \mbox{ in } \mathbb{R}^N \setminus \Omega \Big\}.
$$
Now, we obtain a $p-$Laplacian operator in the local part,
$$
\Delta_p u(x) = \mbox{div}(|\nabla u|^{p-2} \nabla u) (x)
$$
and a nonlocal $r-$Laplacian in the nonlocal part
$$
L (u) (x) = \int_{\mathbb{R}^N} J(x-y) |u(y)-u(x)|^{r-2}
(u(y)-u(x))\,{\rm d}y.
$$

\subsection{More general vectorial models}
\label{sec:caso_mas_gral}
The key inequality 
$$
E(U) \geq c \int_{\Omega}|U (x)|^2 {\rm d}x - \int_\Omega F(x)U(x) {\rm d}x, 
$$
with $E(U)=E_I(U),E_{II}(U)$, can be easily obtained for more general local energies $\Xi(U)$  by following the same arguments used along the vectorial section. This is indeed the case, for instance, under the assumption
$$
C\Xi(U) + \int_{\Omega} U^2(x)\, {\rm d}x\ge \int_{\Omega} \nabla U(x)^2\, {\rm d}x,
$$
if in addition we have 
lower semicontinuity of $\Xi$ in $H^1(\Omega_{\ell})^{N}$ 
together with the condition $ker(\Xi)\subseteq RM$ we can obtain existence of minimizers of the corresponding energy. 
The obvious choice 
$$
\Xi(U)=\int_{\Omega} W(\nabla U(x))\,dx
$$
with 
$
W:\RR^{N\times N}\to \RR,
$
convex, bounded by below and with a growth condition of the form $W(M)\ge C\|M\|^2$, verifies these assumptions.

Nonetheless, it is well known that convex stored energy funtions are not appropriate for general elastic materials, partially due  to the restriction $det(\nabla U)>0$  (needed in order to prevent interpenetration of matter).  This fact, together with the expected non-uniqueness of minimizers in certain contexts (e.g. buckling) has called for surrogates of convexity, such as polyconvexity, cuasiconvexity,  rank-one convexity, etc. We do not treat here couplings  involving these 
kinds of energies.

\medskip

\medskip


\bibliographystyle{plain}

\begin{thebibliography}{99}




\bibitem{Peri1} Azdoud, Y.; Han, F.; Lubineau, G. {\it A morphing framework to couple non-local and local anisotropic
continua}. Inter. J. Solids Structures 50(9),  (2013), 1332--1341.

\bibitem{AD}
  Acosta, G.;  Dur\'an R.
\newblock {Divergence Operator and Related Inequalities}.
\newblock SpringerBriefs in Mathematics, 2017.

\bibitem{ElLibro}
  Andreu-Vaillo, F.;  Toledo-Melero, J.; Mazon, J. M.;  
  Rossi, J. D.
\newblock {Nonlocal diffusion problems}.
\newblock Number 165. American Mathematical Soc., 2010.

\bibitem{Peri2} Badia, S.; Bochev, P.; Lehoucq, R.; Parks, M.; Fish, J.; Nuggehally, M.A.; Gunzburger, M. {\it A forcebased
blending model for atomistic-to-continuum coupling}. Inter. J. Multiscale
Comput. Engineering, 5(5), (2007), 387--406.

\bibitem{Peri3} Badia, S.; Parks, M.; Bochev, P.; Gunzburger, M.; Lehoucq, R. {\it On atomistic-to-continuum coupling
by blending}. Multiscale Modeling Simulation, 7(1), (2008), 381--406.

\bibitem{BCh} Bates, P.; Chmaj, A. \emph{An integrodifferential
model for phase transitions: stationary solutions in higher
dimensions}. J. Statist. Phys. 95 (1999), no.\,5--6, 1119--1139.

\bibitem{Bere} Berestycki, H.; Coulon, A.-Ch.; Roquejoffre, J-M.; Rossi, L. 
{ \it The effect of a line with nonlocal diffusion on Fisher-KPP propagation.} Math. Models Meth.  Appl. 
Sciences, 25.13, (2015), 2519--2562.


\bibitem{brezis}
Brezis, H. \emph{Functional analysis, Sobolev spaces and partial differential equations}. Springer Science \& Business Media, 2010.


\bibitem{CaRo} Capanna, M.; Rossi, J. D. {\it Mixing local and nonlocal evolution equations.} Preprint.

\bibitem{CF} Carrillo, C.; Fife, P. \emph{Spatial effects in discrete generation population models}. J. Math. Biol. 50 (2005), no.\,2, 161--188.


\bibitem{ChChRo} Chasseigne, E.; Chaves, M.; Rossi, J.\,D. \emph{Asymptotic behavior for nonlocal diffusion equations}. J. Math. Pures Appl. (9) 86 (2006), no.\,3, 271--291.

\bibitem{Cia} Ciarlet, P. G. Three-Dimensional Elasticity.
Mathematical Elasticity, Vol. 1,
Studies in mathematics and its applications, 1994.

\bibitem{CERW} Cort\'azar, C.; M. Elgueta, M.; Rossi, J.\,D.; Wolanski, N. \emph{Boundary fluxes for non-local diffusion}.   J. Differential Equations 234 (2007), no.\,2, 360--390.


\bibitem{Cortazar-Elgueta-Rossi-Wolanski}  Cort\'azar, C.; M. Elgueta, M.; Rossi, J.\,D.; Wolanski, N. \emph{How to approximate the heat equation with Neumann boundary conditions by nonlocal diffusion problems}. Arch. Ration. Mech. Anal. 187 (2008), no.\,1, 137--156.

\bibitem{Coville} Coville, J.; Dupaigne, L. {\it On a non-local equation arising in population dynamics}. Proc. Royal Soc. Edinburgh A: Mathematics, 
(2007), 137.4: 727--755.


\bibitem{delia2} D'Elia, M.; Perego, M.; Bochev, P.; Littlewood, D. \emph{A coupling strategy for nonlocal and local diffusion models with mixed volume constraints and boundary conditions}. Comput. Math. Appl. 71 (2016), no.\,11, 2218--2230.

\bibitem{delia3} D'Elia, M.; Ridzal, D.; Peterson, K.\,J.; Bochev, P.; Shashkov, M. \emph{Optimization-based mesh correction with volume and convexity constraints}. J. Comput. Phys. 313 (2016), 455--477.


\bibitem{delia}
D'Elia, M.; Bochev, P. \emph{Formulation, analysis and computation of an optimization-based local-to-nonlocal coupling method}. arXiv:1910.11214.

\bibitem{SUR} 
D'Elia, M.; Li, X.; Seleson, P.; Tian, X.; Yu, Y. {\it A review of Local-to-Nonlocal coupling methods in nonlocal diffusion and nonlocal mechanics.}
arXiv:1912.06668.



\bibitem{DiPaola} Di Paola, M.; Giuseppe F.; M. Zingales. { \it Physically-based approach to the mechanics of strong non-local linear elasticity theory.} Journal of Elasticity 97.2 (2009): 103--130.


\bibitem{Du} Du, Q.; Li, X.\,H.; Lu, J.; Tian, X. \emph{A quasi-nonlocal coupling method for nonlocal and local diffusion models}. SIAM J. Numer. Anal. 56 (2018), no.\,3, 1386--1404.


\bibitem{F} Fife, P. \emph{Some nonclassical trends in parabolic and parabolic-like evolutions}. In \lq\lq Trends in nonlinear analysis'', 153–191, Springer, Berlin, 2003.



\bibitem{Gal} Gal, C. G.; Warma, M. {\it Nonlocal transmission problems with fractional diffusion and boundary conditions on non-smooth interfaces.} Comm. Partial Differential Equations, 42(4), (2017), 579--625.


\bibitem{GQR} G\'{a}rriz, A.; Quir\'os, F.; Rossi, J. D. {\it Coupling local and nonlocal evolution equations.} Calc. Var. PDE, 
59(4), article 117, (2020), 1--25.

\bibitem{Han} Han, F.; Gilles L. { \it Coupling of nonlocal and local continuum models by the Arlequin approach.} Inter. Journal Numerical Meth. 
Engineering, 89(6), (2012): 671--685.

\bibitem{Hutson} Hutson, V.; Martinez, S.; Mischaikow, K.; Vickers, G. T. {\it The evolution of dispersal}. J. Math. Biology, 47(6), (2003),
483--517.



\bibitem{Kri} Kriventsov, D. {\it Regularity for a local-nonlocal transmission problem}. Arch. Ration.
Mech. Anal. 217 (2015), 1103--1195.

\bibitem{Mar} Marsden, J.E.; Hughes T.J.R. { 
Mathematical Foundations of Elasticity.}
Dover Publications, Rev. Ed. 2012.

\bibitem{MenDu} 
Mengesha, T.; Du, Q. {\it The bond-based peridynamic system with Dirichlet-type volume constraint}, Proc. Roy. Soc. Edinburgh Sect. A, 
(2014), Nro. 1, 161--186.


\bibitem{santos2020} dos Santos, B. C.; Oliva, S. M.; Rossi, J. D.
\newblock \emph{A local/nonlocal diffusion model}. To appear in Applicable Analysis.
\newblock arXiv preprint: 2003.02015, 2020.


\bibitem{Sel} Seleson, P.; Samir B.; Serge P.
{ \it A force-based coupling scheme for peridynamics and classical elasticity.} Computational Materials Science, 66, (2013), 34--49.

\bibitem{Sel2} Seleson, P.; Gunzburger, M. {\it Bridging methods for atomistic-to-continuum coupling and their implementation}.
Comm. Comput. Physics, 7(4), (2010), 831. 

\bibitem{Sel3} Seleson, P.; Gunzburger, M.; Parks, M.L. {\it Interface problems in nonlocal diffusion and sharp transitions
between local and nonlocal domains}. Comput. Methods Appl. Mech. Engineering,
266, (2013), 185--204.

\bibitem{Sil} Silling, S. A.; {\it Reformulation of elasticity theory for discontinuities and long-range forces}. Jour. Mech. Physics Solids, 48(1), 
 2000, 175---209,.

\bibitem{Sil1} Silling, S. A.; Lehoucq, R. B. { \it Peridynamic theory of solid mechanics}. In Advances in applied mechanics. Vol. 44, pp. 73--168. Elsevier, 2010.


\bibitem{Sil2} Silling, S. A.; Epton, M.; Weckner, O.; Xu, J.; 
Askari, E. {\it Peridynamic
states and constitutive modeling.} Jour. Elasticity, 
88, (2007), 151--184.

\bibitem{Strick} Strickland, C.; Gerhard D.; Patrick D. S.; { \it Modeling the presence probability of invasive plant species with nonlocal dispersal.} Jour.
Math. Biology, 69(2), (2014), 267--294.

\bibitem{TeMir} Temam, R. , Miranvielle, A. {Mathematical Modeling in Continuum Mechanics.} Cambridge Univ. Press, 2000.

\bibitem{W} Wang, X. \emph{Metastability and stability of patterns in a convolution model for phase transitions}.
J. Differential Equations, 183, (2002), no.\,2, 434--461.

\bibitem{Z} Zhang, L. \emph{Existence, uniqueness and exponential stability of traveling wave solutions of some integral differential equations
arising from neuronal networks}. J. Differential Equations, 197, (2004), no.\,1, 162--196.



\end{thebibliography}

\end{document}